\providecommand{\U}[1]{\protect\rule{.1in}{.1in}}
\newtheorem{proposition}{Proposition}[section]
\newtheorem{theorem}[proposition]{Theorem}
\newtheorem{lemma}[proposition]{Lemma}
\newtheorem{definition}[proposition]{Definition}
\newtheorem{remark}[proposition]{Remark}
\newtheorem{condition}[proposition]{Condition}
\numberwithin{equation}{section}
\numberwithin{proposition}{section}
\begin{document}
\title{Large Deviations and Importance Sampling for Systems of Slow-Fast Motion}
\author{Konstantinos Spiliopoulos}
\address{Division of Applied Mathematics, Brown University, Providence, RI 02912 \& Department of Mathematics and Statistics, Boston University, Boston, MA, 02215}
\email{kspiliop@dam.brown.edu,kspiliop@math.bu.edu}
\thanks{The author would like to acknowledge support by the Department of Energy (DE-SCOO02413) during his stay at Brown University and by a start-up fund by Boston University during completion and revision of this paper.}
\date{\today}
\maketitle

\begin{abstract}
In this paper we develop the large deviations principle and a
rigorous mathematical framework for asymptotically efficient  importance
sampling schemes for general, fully dependent systems of stochastic
differential
 equations of slow and fast motion with small noise in the slow component. We assume  periodicity with respect to  the fast component. Depending on the interaction
 of the fast scale with the smallness of the noise, we get different behavior. We examine how one range
of interaction differs from the other one both for the large deviations and for the importance sampling. We use the large deviations results to identify  asymptotically optimal importance sampling schemes in each case.
Standard Monte Carlo schemes perform poorly in the small noise limit. In the presence of multiscale aspects one
 faces additional
 difficulties and straightforward adaptation of importance sampling schemes for standard small noise diffusions will
 not produce efficient schemes. It turns out that one has to consider the so called cell problem from the
 homogenization theory
 for Hamilton-Jacobi-Bellman equations in order to guarantee asymptotic optimality. We use stochastic control arguments.
\end{abstract}

\textbf{Keywords}: importance sampling, Monte Carlo, large deviations, homogenization, multiscale, slow-fast motion.

\textbf{AMS}: 60F05, 60F10, 60G60

\section{Introduction}
Let us consider the $m+(d-m)$ dimensional process $(X^{\epsilon},Y^{\epsilon})=\{(X^{\epsilon}(s),Y^{\epsilon}(s)), 0\leq s\leq T\}$ satisfying the system of stochastic differential equations (SDE's)

\begin{eqnarray}
dX^{\epsilon}(s)&=&\left[  \frac{\epsilon}{\delta}b\left(  X^{\epsilon}(s)%
,Y^{\epsilon}(s)\right)+c\left(  X^{\epsilon}(s)%
,Y^{\epsilon}(s)\right)\right]   ds+\sqrt{\epsilon}%
\sigma\left(  X^{\epsilon}(s),Y^{\epsilon}(s)\right)
dW(s),\nonumber\\
dY^{\epsilon}(s)&=&\frac{1}{\delta}\left[  \frac{\epsilon}{\delta}f\left(  X^{\epsilon}(s)%
,Y^{\epsilon}(s)\right)  +g\left(  X^{\epsilon}(s)%
,Y^{\epsilon}(s)\right)\right] ds+\frac{\sqrt{\epsilon}}{\delta}\left[
\tau_{1}\left(  X^{\epsilon}(s),Y^{\epsilon}(s)\right)\right.
dW(s)+\nonumber\\
& &\hspace{8cm}\left.+\tau_{2}\left(  X^{\epsilon}(s),Y^{\epsilon}(s)\right)dB(s)\right], \label{Eq:Main}\\
X^{\epsilon}(0)&=&x_{0},\hspace{0.2cm}Y^{\epsilon}(0)=y_{0}\nonumber
\end{eqnarray}
where $\delta=\delta(\epsilon)\downarrow0$ as $\epsilon\downarrow0$ and $(W(s), B(s))$ is a $2\kappa-$dimensional standard Wiener process. The functions $b(x,y),c(x,y),\sigma(x,y), f(x,y), g(x,y), \tau_{1}(x,y)$ and $\tau_{2}(x,y)$ are assumed to be sufficiently smooth (see Condition \ref{A:Assumption1}) and periodic with period $\lambda$ in every direction with respect to the second variable.

One can interpret the system of (\ref{Eq:Main}) as a system of slow and fast motion with $X^{\epsilon}$ playing the role of the slow motion and
$Y^{\epsilon}$ playing the role of the fast motion. The goal of this paper is to provide a large deviations analysis of (\ref{Eq:Main}) that allows to rigorously
develop  the importance sampling theory for estimation of functionals such as
\begin{equation}
\theta(\epsilon)\doteq\mathrm{E}[e^{-\frac{1}{\epsilon}h(X^{\epsilon}%
(T))}|X^{\epsilon}(0)=x_{0}, Y^{\epsilon}(0)=y_{0}].\label{Eq:ToBeEstimated}
\end{equation}
Importance sampling is a variance reduction technique in Monte Carlo simulation. As it is well
known, standard Monte Carlo sampling techniques perform very poorly in that
the relative errors under a fixed computational effort grow rapidly as the
event becomes more and more rare. Estimating rare event probabilities in the
context of slow-fast systems presents extra difficulties due to the underlying fast motion and its
interaction with the intensity of the noise $\epsilon$.

Depending on the order that $\epsilon,\delta$ go to zero, we have three different regimes of interaction:
\begin{equation}
\lim_{\epsilon\downarrow0}\frac{\epsilon}{\delta}=%
\begin{cases}
\infty & \text{Regime 1,}\\
\gamma\in(0,\infty) & \text{Regime 2,}\\
0 & \text{Regime 3.}%
\end{cases}
\label{Def:ThreePossibleRegimes}%
\end{equation}

If $\delta$ goes to zero faster than $\epsilon$ (Regime 1) then
homogenization occurs first, whereas if $\epsilon$ goes to zero
faster than $\delta$ (Regime 3) then large deviations theory tells
how quickly  (\ref{Eq:Main}) converges to the averaged deterministic
ODE given by setting $\epsilon$ equal to zero. If the two parameters
go to zero together then one has an intermediate situation (Regime 2).

The study of rare events in the
multiscale context is a difficult problem due to the presence
of the underlying fast motion. The first necessary step is to
develop the associated large deviations theory.
Using weak convergence arguments the authors in
 \cite{DupuisSpiliopoulos} prove the large deviations principle for the special case $f=b,g=c,\tau_{1}=\sigma$ and $\tau_{2}=0$.
 We extend the results of \cite{DupuisSpiliopoulos} to the current general setup.
Then, using the large deviation results and stochastic control
arguments  we construct asymptotically
optimal importance sampling schemes with rigorous bounds on
performance. The construction is based on subsolutions for an
associated Hamilton-Jacobi-Bellman (HJB) equation as in
\cite{DupuisSpiliopoulosWang, DupuisWang2}. The situation here is
complicated due to the presence of the fast motion. It turns out
that changes of measure that are implied by the homogenized system
do not lead to efficient importance sampling schemes. The standard
arguments have to be modified taking into account the solution to
the related "cell problem" which is
 different for each regime. This is also tightly related to the homogenization theory for HJB equations. A control in
full feedback form, i.e., a function of both the slow variable $X^{\epsilon}$
and the fast variable $Y^{\epsilon}$, is used to construct dynamic importance sampling schemes with precise asymptotic performance bounds. The control involves both the solution to the appropriate homogenized HJB equation and to its corresponding cell problem.

The novelty of this work lies in developing (a) the large deviations
principle and (b) a general and rigorous mathematical
 framework for the study of importance sampling schemes for systems of slow-fast motion as in (\ref{Eq:Main}) for all three regimes of interaction,
(\ref{Def:ThreePossibleRegimes}). Multiscale stochastic control
problems and related large deviations problems have been studied
elsewhere as well under various assumptions and dependencies of the
coefficients of the system on the slow and fast motion, see
\cite{Baldi, BorkarGaitsgory, DupuisSpiliopoulos, FengFouqueKumar,
FS, Kushner, Kushner1, Lipster, PardouxVeretennikov1, Veretennikov,
VeretennikovSPA2000}. The papers \cite{FengFouqueKumar, FS,
Kushner1, Veretennikov, VeretennikovSPA2000} address the large
deviations principle for Regime $2$ for special cases of dependence
of the coefficients on $(x,y)$. With the exception of
\cite{DupuisSpiliopoulos,Kushner1}, they express it through a Legendre-Fenchel
transform of the limit of the normalized logarithm of an exponential
moment or of the first eigenvalue of an associated operator. Here we
provide an explicit characterization of the action functional. Also, the large deviations
arguments in the aforementioned papers do not cover the full nonlinear case that we study here and do not  seem
to provide insights into how to construct asymptotically efficient importance sampling
schemes.  Some related importance sampling results on this problem
have been recently obtained in \cite{DupuisSpiliopoulosWang}. There
the authors study the special case of $f=b,g=c,\tau_{1}=\sigma$ and
$\tau_{2}=0$ for Regime 1 only and provide simulation studies for
that particular case as well. It is also demonstrated there that straightforward adaptation of importance sampling schemes for standard diffusion processes (without the multiscale aspect) will have poor results in the multiscale setting. This translates in that one needs to consider the solution to the cell problem in problems with multiple scales in order to guarantee good asymptotic performance.  The treatment of the general case, that is the content of the current paper,
requires additional considerations. In particular, the
identification of the optimal control and of the associated
subsolutions and cell problems are more involved here even for
Regime $1$. The case of Regimes $2$ and $3$ is studied in this paper
for the first time. This work is closely related to the
homogenization theory of HJB equations, e.g.,
\cite{AlvarezBardi2001,ArisawaLions,BuckdahnIchihara,Evans2,HorieIshii,LionsSouganidis},
see Section \ref{S:HJB}.

We note here that one may possibly be able to relax the periodicity assumption both for the large deviations and for the importance sampling. In particular, in the case of Regime $1$ using
 the results and methodology of \cite{PardouxVeretennikov1,DupuisSpiliopoulos}  and of the present paper, we can probably prove an analogous result when the fast
variable takes values in $\mathbb{R}^{d-m}$ instead of the torus. Of course, one would need to impose the appropriate recurrence conditions for the fast motion. In the case of Regime $2$, the extension to the whole space with full dependence of the coefficients on $(x,y)$
is more involved. However, it  seems plausible that the methods of the current paper  can be combined  with those of
\cite{KaiseSheu,BensoussanFrehse,DupuisSpiliopoulos} to weaken the periodicity assumption for Regime $2$ as well. This will be addressed elsewhere.

The need to simulate rare events occurs in many application areas
including telecommunication, finance, insurance and chemistry. We
present some examples in Section \ref{S:Examples}. A model of interest in
chemical physics and chemistry is the first order Langevin equation
in a rough potential, e.g.
\cite{LifsonJackson,MondalGhosh,SavenWangWolynes,
DupuisSpiliopoulosWang2, Zwanzig}. This is a special case of the
system (\ref{Eq:Main}) with $f=b=-\nabla Q(y)$, $g=c=-\nabla V(x)$,
$\tau_{1}=\sigma=\textrm{constant}$ and $\tau_{2}=0$ and is
discussed in Subsection \ref{SS:Example1}. Another example,
discussed in Subsection \ref{SS:Example2}, is related to short time asymptotics of a process that depends on another fast mean reverting process.

The rest of the paper is organized as follows. In Section \ref{S:Notation} we introduce necessary notation and our assumptions.
Section \ref{S:LDP} is devoted to the related large deviations theory.
In Section \ref{S:IS} we develop the importance sampling theory for all three possible
regimes of interaction that guarantees asymptotic optimality. In Section \ref{S:HJB} we discuss the connection of the importance sampling theory with the homogenization of HJB equations. We conclude with
 Section \ref{S:Examples} where we examine how our results look like in some special cases of interest.

\section{Notation and assumptions}\label{S:Notation}

In this section we establish some notation and lay out our main assumptions. Let us assume a filtered probability space $(\Omega,\mathfrak{F}%
,\mathbb{P})$ equipped with a filtration $\mathfrak{F}_{t}$ that satisfies the
usual conditions, namely, $\mathfrak{F}_{t}$ is right continuous and
$\mathfrak{F}_{0}$ contains all $\mathbb{P}$-negligible sets.

The main assumption for the coefficients of (\ref{Eq:Main}) is as follows.

\begin{condition}
\label{A:Assumption1}

\begin{enumerate}
\item The functions $b(x,y),c(x,y),\sigma(x,y), f(x,y), g(x,y), \tau_{1}(x,y)$ and $\tau_{2}(x,y)$ are 
bounded in both variables and periodic with period $\lambda$ in the second variable
in each direction. We additionally assume that they
are $C^{1}(\mathbb{R}^{d-m})$ in $y$ and $C^{2}(\mathbb{R}^{m})$ in $x$ with all
partial derivatives continuous and globally bounded in $x$ and $y $.

\item The diffusion matrices $\sigma\sigma^{T}$ and $\tau_{1}\tau_{1}^{T}+\tau_{2}\tau_{2}^{T}$ are uniformly nondegenerate.

\end{enumerate}
\end{condition}

Under Condition  \ref{A:Assumption1} the system (\ref{Eq:Main}) has
a unique strong solution. The smoothness assumptions are stronger
than necessary, but they guarantee smoothness and boundedness of the
associated cell problems that will appear in the development of the
importance sampling theory. For notational convenience we define the
operator $\cdot:\cdot$, where for two matrices
$A=[a_{ij}],B=[b_{ij}]$
\[
A:B\doteq\sum_{i,j}a_{ij}b_{ij}.
\]

Let $\mathcal{Y}=\mathbb{T}^{d-m}$ be the $(d-m)$-dimensional torus. This is the state space of the fast motion. For the purposes of consistency with the related literature we use similar notation as in \cite{DupuisSpiliopoulos,DupuisSpiliopoulosWang} with the appropriate modifications in order to cover the more general set-up that we treat here.

Under Regime $1$, we also impose the following condition.

\begin{condition}
\label{A:Assumption2} Let $F\in\mathcal{C}^{2}\left(\mathcal{Y};\mathbb{R}\right)$ and consider the operator
\begin{equation*}
\mathcal{L}_{x}^{1}F(y)=f(x,y)\cdot\nabla_{y}F(y)+\frac{1}{2}\left(\tau_{1}\tau_{1}^{T}+\tau_{2}\tau_{2}^{T}\right)(x,y):\nabla_{y}\nabla_{y}F(y) \label{OperatorRegime1}%
\end{equation*}
equipped with periodic boundary conditions in $y$. Under Regime 1, we assume the centering condition
(see \cite{BLP}):
\[
\int_{\mathcal{Y}}b(x,y)\mu(dy|x)=0,
\]
where $\mu(dy|x)$ is the unique invariant measure
corresponding to the operator $\mathcal{L}_{x}^{1}$.
\end{condition}

Under Conditions \ref{A:Assumption1} and \ref{A:Assumption2}, Theorem  $3.3.4$ in \cite{BLP} guarantees
that for
each $\ell\in\{1,\ldots,m\}$ there is a unique, twice differentiable, with all partial derivatives up to second order bounded, $\lambda-$periodic in each direction function
$\chi_{\ell}(x,y)$ that satisfies
the cell problem:
\begin{equation}
\mathcal{L}_{x}^{1}\chi_{\ell}(x,y)=-b_{\ell}(x,y),\quad\int_{\mathcal{Y}}%
\chi_{\ell}(x,y)\mu(dy|x)=0. \label{Eq:CellProblem}%
\end{equation}
We write $\chi=(\chi_{1},\ldots,\chi_{m})$.

\vspace{0.4cm}

Let us denote $\mathcal{Z}=\mathbb{R}^{\kappa}$. This will be the space in which the control processes
that will appear in the next sections take values.

\begin{definition}
\label{Def:ThreePossibleOperators} For
$(x,y,z_{1},z_{2})\in\mathbb{R}^{m}\times\mathcal{Y}\times\mathcal{Z}\times\mathcal{Z}$
and for Regime $i=1,2,3$ defined in (\ref{Def:ThreePossibleRegimes})
we define the operators $\mathcal{L}_{z_{1},z_{2},x}^{i}$. For $i=1,2$ we let $\mathcal{D}(\mathcal{L}_{z,x}^{i})=\mathcal{C}%
^{2}(\mathcal{Y})$ and for $i=3$, $\mathcal{D}(\mathcal{L}_{z,x}%
^{3})=\mathcal{C}^{1}(\mathcal{Y})$. For
$F\in\mathcal{D}(\mathcal{L}_{z,x}^{i}) $ define
\begin{align}
\mathcal{L}_{x}^{1}F(y)  &  =f(x,y)\cdot\nabla_{y}F(y)+\frac{1}{2}\left(\tau_{1}\tau_{1}^{T}+\tau_{2}\tau_{2}^{T}\right)(x,y):\nabla_{y}\nabla_{y}F(y) \nonumber\\
\mathcal{L}_{z_{1},z_{2},x}^{2}F(y)  &  =\left[  \gamma
f(x,y)+g(x,y)+\tau_{1}(x,y)z_{1}+\tau_{2}(x,y)z_{2}\right]
\cdot\nabla_{y}F(y)+\gamma\frac{1}{2}\left(\tau_{1}\tau_{1}^{T}+\tau_{2}\tau_{2}^{T}\right)(x,y):\nabla_{y}%
\nabla_{y}F(y)\nonumber\\
\mathcal{L}_{z_{1},z_{2},x}^{3} F(y) &  =\left[
g(x,y)+\tau_{1}(x,y)z_{1}+\tau_{2}(x,y)z_{2}\right]  \cdot\nabla
_{y}F(y).\nonumber
\end{align}
\end{definition}


\begin{definition}
\label{Def:ThreePossibleFunctions} For $(x,y,z_{1},z_{2})\in\mathbb{R}^{m}\times\mathcal{Y}\times\mathcal{Z}\times\mathcal{Z}$
and for Regime $i=1,2,3$
defined in (\ref{Def:ThreePossibleRegimes}) we define the functions $\lambda_{i}(x,y,z_{1},z_{2}):\mathbb{R}%
^{m}\times\mathcal{Y}\times\mathcal{Z}\times\mathcal{Z}\rightarrow\mathbb{R}^{m}$ by
\begin{align}
\lambda_{1}(x,y,z_{1},z_{2})  &  = c(x,y)+\frac{\partial\chi}{\partial y}(x,y) g(x,y) +\sigma(x,y)z_{1}+\frac{\partial\chi}{\partial y}(x,y)\left(\tau_{1}(x,y)z_{1}+\tau_{2}(x,y)z_{2}\right) \nonumber\\
\lambda_{2}(x,y,z_{1},z_{2})  &  =\gamma b(x,y)+c(x,y)+\sigma(x,y)z_{1}\nonumber\\
\lambda_{3}(x,y,z_{1},z_{2})  &  =c(x,y)+\sigma(x,y)z_{1},\nonumber
\end{align}
where $\chi=(\chi_{1},\ldots,\chi_{m})$ is defined by (\ref{Eq:CellProblem}).
\end{definition}

For a Polish space $\mathcal{S}$, let $\mathcal{P}(\mathcal{S})$ be the space
of probability measures on $\mathcal{S}$. Next we recall the notion of viability as defined in \cite{DupuisSpiliopoulos}.

\begin{definition}
\label{Def:ViablePair} A pair $(\psi,\mathrm{P})\in\mathcal{C}%
([0,T];\mathbb{R}^{m})\times\mathcal{P}(\mathcal{Z}\times\mathcal{Z}\times\mathcal{Y}%
\times\lbrack0,T])$ will be called viable with respect to $(\lambda
,\mathcal{L})$ and write
$(\psi,\mathrm{P})\in\mathcal{V}_{(\lambda,\mathcal{L})}$, if the
following hold:
\begin{itemize}
 \item The function $\psi_{t}$ is absolutely continuous.
\item The measure $\mathrm{P}$
is square integrable in the sense that $\int_{\mathcal{Z}\times\mathcal{Z}\times\mathcal{Y}%
\times\lbrack0,T]}\left\Vert z\right\Vert
^{2}\mathrm{P}(dz_{1}dz_{2}dyds)<\infty$.
\item For all $t\in\lbrack0,T]$
\begin{equation}
\psi_{t}=x_{0}+\int_{\mathcal{Z}\times\mathcal{Z}\times\mathcal{Y}\times\lbrack0,t]}%
\lambda(\psi_{s},y,z_{1},z_{2})\mathrm{P}(dz_{1}dz_{2}dyds),
\label{Eq:AccumulationPointsProcessViable}%
\end{equation}
\item For all $t\in\lbrack0,T]$ and for every $f\in\mathcal{D}(\mathcal{L})$%
\begin{equation}
\int_{0}^{t}\int_{\mathcal{Z}\times\mathcal{Z}\times\mathcal{Y}}\mathcal{L}_{z_{1},z_{2},\psi_{s}%
}f(y)\mathrm{P}(dz_{1}dz_{2}dyds)=0, \label{Eq:AccumulationPointsMeasureViable}%
\end{equation}
\item For all $t\in\lbrack0,T]$
\begin{equation}
\mathrm{P}(\mathcal{Z}\times\mathcal{Z}\times\mathcal{Y}\times\lbrack0,t])=t.
\label{Eq:AccumulationPointsFullMeasureViable}%
\end{equation}

\end{itemize}

\end{definition}

Notice that equation (\ref{Eq:AccumulationPointsFullMeasureViable}) implies that the last
marginal of $\mathrm{P}$ is Lebesgue measure, and hence $\mathrm{P}$ can be
 decomposed in the form $\mathrm{P}(dz_{1}dz_{2}dydt)=\mathrm{P}_{t}(dz_{1}dz_{2}dy)dt$.

\section{Large deviations principle}\label{S:LDP}

The authors in \cite{DupuisSpiliopoulos} establish the large deviations principle related to (\ref{Eq:Main}) in the special case of  $f=b,g=c,\tau_{1}=\sigma$ and $\tau_{2}=0$. We extend the results of \cite{DupuisSpiliopoulos} to the current general setup.  A uniform approach to the large deviations problem for (\ref{Eq:Main}) is presented,
 allowing to
essentially treat all three regimes with the same general strategy, even though the technical details might be different from regime to regime. Moreover, in the course of the proof of the large deviations lower bound, we need to construct a nearly optimal control that attains the large deviations bound. As we will see in Section \ref{S:IS}, this control can guide the construction of efficient importance sampling for the estimation of quantities such as (\ref{Eq:ToBeEstimated}).

Essentially, in each regime, the
action functional is given by the infimization of a quadratic
functional, where the infimum is determined by the averaging of an
appropriate controlled version of the limiting slow motion with
respect to the corresponding fast motion. Both the limiting slow
motion and the fast motion with respect to which the averaging is
being done, differ from regime to regime. This is related to the
notion of viability from Definition \ref{Def:ViablePair} where the
viable pairs $(\lambda,\mathcal{L})$ are obtained from Definitions
\ref{Def:ThreePossibleOperators} and
\ref{Def:ThreePossibleFunctions} for each regime. What defers from the special case considered in \cite{DupuisSpiliopoulos} is the form of the appropriate viable pair
in each case. We present this
characterization below.

In preparation for stating the main large deviations results, we recall the concept of a
Laplace principle.

\begin{definition}
\label{Def:LaplacePrinciple} Let $\{X^{\epsilon},\epsilon>0\}$ be a family of
random variables taking values in a Polish space $\mathcal{S}$ and let $I$ be a rate function
on $\mathcal{S}$. We say that $\{X^{\epsilon},\epsilon>0\}$ satisfies the
Laplace principle with rate function $I$ if for every bounded and continuous
function $h:\mathcal{S}\rightarrow\mathbb{R}$
\begin{equation}
\lim_{\epsilon\downarrow0}-\epsilon\ln\mathbb{E}\left[  \exp\left\{
-\frac{h(X^{\epsilon})}{\epsilon}\right\}  \right]  =\inf_{x\in\mathcal{S}%
}\left[  I(x)+h(x)\right]. \label{Eq:LaplacePrinciple}%
\end{equation}

\end{definition}

If the level sets of the rate function (equivalently action functional) are compact, then the
Laplace principle is equivalent to the corresponding large deviations
principle with the same rate function (Theorems 2.2.1 and 2.2.3
in \cite{DupuisEllis}).

The derivation of the large deviations and importance sampling results are based on a variational representation for functionals of Wiener process derived in
\cite{BoueDupuis} that allows to rewrite the prelimit left hand side of (\ref{Eq:LaplacePrinciple}). Let $Z(\cdot)$ be a standard $n$-dimensional Wiener process and $F(\cdot)$ a bounded and measurable real-valued function define on the set of $\mathbb{R}^{n}-$ valued continuous functions on $[0,T]$. By Theorem 3.1 in \cite{BoueDupuis} we have

\begin{equation}
-\log\mathbb{E}\left[\exp\left\{-F(Z(\cdot))\right\}\right]=\inf_{u\in\mathcal{A}}\mathbb{E}\left[\frac{1}{2}\int_{0}^{T}\left\Vert u(s)\right\Vert^{2}ds+F\left(Z(\cdot)+\int_{0}^{\cdot}u(s)ds\right)\right]
\end{equation}
where $\mathcal{A}$ is the set of all $\mathfrak{F}_{s}-$progressively
measurable $n$-dimensional processes $u\doteq\{u(s),0\leq s\leq T\}$
satisfying
\begin{equation*}
\mathbb{E}\int_{0}^{T}\left\Vert u(s)\right\Vert ^{2}ds<\infty,
\label{A:AdmissibleControls}%
\end{equation*}
In the present case, let $Z(\cdot)=(W(\cdot),B(\cdot))$ and $n=2k$. Under Condition \ref{A:Assumption1}, the system has (\ref{Eq:Main}) has a unique strong solution. Therefore $X^{\epsilon}$ and $Y^{\epsilon}$ are measurable functions of $Z(\cdot)=(W(\cdot),B(\cdot))$. After setting $F(Z(\cdot))=h(X^{\epsilon}(\cdot))/\epsilon$ and rescaling the controls by $\frac{1}{\sqrt{\epsilon}}$ we get the representation

\begin{equation}
-\epsilon\ln\mathbb{E}_{x_{0}, y_{0}}
\left[  \exp\left\{  -\frac{h(X^{\epsilon}%
)}{\epsilon}\right\}  \right]  =\inf_{u\in\mathcal{A}}\mathbb{E}_{x_{0},y_{0}%
}\left[  \frac{1}{2}\int_{0}^{T}\left[\left\Vert u_{1}(s)\right\Vert ^{2}+\left\Vert u_{2}(s)\right\Vert ^{2}\right]ds+h(\bar
{X}^{\epsilon})\right] \label{Eq:VariationalRepresentation}
\end{equation}
where the pair $(\bar{X}^{\epsilon},\bar{Y}^{\epsilon})$ is the unique strong solution to

\begin{eqnarray}
d\bar{X}^{\epsilon}(s)&=&\left[  \frac{\epsilon}{\delta}b\left(  \bar{X}^{\epsilon}(s)%
,\bar{Y}^{\epsilon}(s)\right)+c\left(  \bar{X}^{\epsilon}(s)%
,\bar{Y}^{\epsilon}(s)\right)+\sigma\left(  \bar{X}_{t}^{\epsilon},\bar{Y}_{t}^{\epsilon}\right)  u_{1}(s)\right]   ds+\sqrt{\epsilon}%
\sigma\left(  \bar{X}^{\epsilon}(s),\bar{Y}^{\epsilon}(s)\right)
dW(s), \nonumber\\
d\bar{Y}^{\epsilon}(s)&=&\frac{1}{\delta}\left[  \frac{\epsilon}{\delta}f\left(  \bar{X}^{\epsilon}(s)
,\bar{Y}^{\epsilon}(s)\right)  +g\left(  \bar{X}^{\epsilon}(s)
,\bar{Y}^{\epsilon}(s)\right)+\tau_{1}\left(  \bar{X}^{\epsilon}(s)
,\bar{Y}^{\epsilon}(s)\right)u_{1}(s)+\tau_{2}\left(  \bar{X}^{\epsilon}(s)
,\bar{Y}^{\epsilon}(s)\right)u_{2}(s)\right]   ds\nonumber\\
& &\hspace{3.5cm}+\frac{\sqrt{\epsilon}}{\delta}\left[
\tau_{1}\left(  \bar{X}^{\epsilon}(s),\bar{Y}^{\epsilon}(s)\right)
dW(s)+\tau_{2}\left(  \bar{X}^{\epsilon}(s),\bar{Y}^{\epsilon}(s)\right)dB(s)\right],\label{Eq:Main2}\\
\bar{X}^{\epsilon}(0)&=&x_{0},\hspace{0.2cm}\bar{Y}^{\epsilon}(0)=y_{0}\nonumber
\end{eqnarray}

Therefore in order to derive the Laplace principle for
$\{X^{\epsilon}\}$, it is enough to study the limit of the right
hand side of the variational representation
(\ref{Eq:VariationalRepresentation}). The first step in doing so is
to consider the weak limit of the slow motion $\bar{X}^{\epsilon}$
of the controlled couple (\ref{Eq:Main2}). Due to the involved
controls, it is convenient to introduce the following occupation
measure. Let $\Delta=\Delta(\epsilon )\downarrow0$ as
$\epsilon\downarrow0$. The role of $\Delta(\epsilon)$ is to exploit
a time-scale separation. Let $A_{1},A_{2},B,\Gamma$ be Borel sets of
$\mathcal{Z},\mathcal{Z},\mathcal{Y},[0,T]$ respectively. Let
$u^{\epsilon}_{i}\in A_{i}, i=1,2$ and let $(\bar{X}^{\epsilon
}(s),\bar{Y}^{\epsilon}(s))$ solve (\ref{Eq:Main2}) with
$u^{\epsilon}_{i}$ in place of $u_{i}$. We associate with
$(\bar{X}^{\epsilon}(s),\bar{Y}^{\epsilon}(s))$ and
$u^{\epsilon}_{i}$ a family of occupation measures
$\mathrm{P}^{\epsilon,\Delta}$ defined by
\begin{equation*}
\mathrm{P}^{\epsilon,\Delta}(A_{1}\times A_{2}\times B\times\Gamma)=\int_{\Gamma}\left[
\frac{1}{\Delta}\int_{t}^{t+\Delta}1_{A_{1}}(u_{1}^{\epsilon}(s))1_{A_{2}}(u_{2}^{\epsilon}(s))1_{B}\left(
\bar{Y}_{s}^{\epsilon}\mod \lambda\right)  ds\right]  dt,
\label{Def:OccupationMeasures2}%
\end{equation*}
We assume that $u_{i}^{\epsilon}(s)=0$ for $i=1,2$ if $s>T$.

For presentation purposes, we devote Subsection \ref{SS:LLN} to the limiting behavior of the controlled process $\left\{\left(\bar{X}^{\epsilon},\bar{Y}^{\epsilon}\right),\epsilon>0\right\}$ in (\ref{Eq:Main2}) as $\epsilon\downarrow 0$. This is a law of large numbers result. The large deviations result is in Subsection \ref{SS:LDP}

\subsection{Limiting
behavior of the controlled process (\ref{Eq:Main2})}\label{SS:LLN}

Theorem \ref{T:MainTheorem1}, deals with the limiting
behavior of the controlled process (\ref{Eq:Main2}) under each of the three
regimes, and uses the notion of a viable pair. 

\begin{theorem}
\label{T:MainTheorem1} Assume Condition \ref{A:Assumption1} and under Regime 1 assume
Condition \ref{A:Assumption2}. Fix the initial point
 $(x_{0},y_{0})\in\mathbb{R}^{m}\times \mathbb{R}^{d-m}$ and consider a family
$\{u^{\epsilon}=(u^{\epsilon}_{1},u^{\epsilon}_{2}),\epsilon>0\}$ of controls in $\mathcal{A}$ satisfying
\begin{equation}
\sup_{\epsilon>0}\mathbb{E}\int_{0}^{T}\left[\left\Vert u_{1}^{\epsilon}(s)\right\Vert
^{2}+\left\Vert u_{2}^{\epsilon}(s)\right\Vert
^{2}\right]ds<\infty\label{Eq:Ubound}%
\end{equation}
Then the family $\{(\bar{X}^{\epsilon
},\mathrm{P}^{\epsilon,\Delta}),\epsilon>0\}$ is tight. Given the particular regime of interaction $i=1,2,3$ and given any subsequence of $\{(\bar{X}^{\epsilon}%
,\mathrm{P}^{\epsilon,\Delta}),\epsilon>0\}$, there exists a
subsubsequence that converges in distribution with limit
$(\bar{X}^{i},\mathrm{P}^{i})$. With probability $1$, the limit
point
$(\bar{X}^{i},\mathrm{P}^{i})\in\mathcal{V}_{(\lambda_{i},\mathcal{L}^{i})}$,
according to Definition \ref{Def:ViablePair}, with the pairs $(\lambda_{i},\mathcal{L}^{i})$ given by Definitions \ref{Def:ThreePossibleOperators} and \ref{Def:ThreePossibleFunctions}.
\end{theorem}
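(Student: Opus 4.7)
Tightness of $\{\bar{X}^\epsilon\}$ in $\mathcal{C}([0,T];\mathbb{R}^m)$ follows from a standard modulus-of-continuity estimate: $c$ and $\sigma$ are bounded by Condition \ref{A:Assumption1}, $u_1^\epsilon$ enters linearly and is $L^2$-bounded by (\ref{Eq:Ubound}), and the Brownian integral has quadratic variation of order $\epsilon$. In Regime 1 the singular drift $\frac{\epsilon}{\delta}b$ must first be rewritten via the corrector $\chi$ before the estimate applies (see Step 2). Tightness of $\mathrm{P}^{\epsilon,\Delta}$ on $\mathcal{Z}\times\mathcal{Z}\times\mathcal{Y}\times[0,T]$ reduces, since $\mathcal{Y}\times[0,T]$ is compact, to a tail bound on the $\mathcal{Z}\times\mathcal{Z}$ marginal, which is supplied by (\ref{Eq:Ubound}) and Chebyshev. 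Along a subsequence, $(\bar{X}^\epsilon,\mathrm{P}^{\epsilon,\Delta})\Rightarrow(\bar{X}^i,\mathrm{P}^i)$. The Lebesgue marginal condition (\ref{Eq:AccumulationPointsFullMeasureViable}) is built into the definition of $\mathrm{P}^{\epsilon,\Delta}$, square integrability follows from Fatou and (\ref{Eq:Ubound}), and absolute continuity of $\bar{X}^i$ is a consequence of (\ref{Eq:AccumulationPointsProcessViable}).

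\textbf{Step 2 (identifying the drift $\lambda_i$).} In Regimes 2 and 3 the integral form of $\bar{X}^\epsilon$ may be passed to the limit directly: the (now bounded) drift coefficients, evaluated at $(\bar{X}^\epsilon_s,\bar{Y}^\epsilon_s)$, are rewritten as integrals against $\mathrm{P}^{\epsilon,\Delta}$ using the $\Delta(\epsilon)\downarrow 0$ time-scale separation to absorb the dependence on $\bar{X}^\epsilon$ into $\psi_s$, and the limits reproduce $\lambda_2$ and $\lambda_3$ of Definition \ref{Def:ThreePossibleFunctions}. For Regime 1 the crucial step is to apply It\^o's formula to $\delta\,\chi(\bar{X}^\epsilon_s,\bar{Y}^\epsilon_s)$ with $\chi$ the corrector from (\ref{Eq:CellProblem}). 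Because $\mathcal{L}^1_x\chi=-b$, the leading $\frac{\epsilon}{\delta^2}$-order drift in $d\chi$ collapses to $-\frac{\epsilon}{\delta^2}b\,ds$, and after multiplying by $\delta$ one obtains
\[
\int_0^t \frac{\epsilon}{\delta}b(\bar{X}^\epsilon_s,\bar{Y}^\epsilon_s)\,ds \;=\; -\delta\bigl[\chi(\bar{X}^\epsilon_t,\bar{Y}^\epsilon_t)-\chi(x_0,y_0)\bigr] + \int_0^t \frac{\partial\chi}{\partial y}\bigl[g+\tau_1 u_1^\epsilon+\tau_2 u_2^\epsilon\bigr]\,ds + \mathcal{R}^\epsilon_t,
\]
where $\mathcal{R}^\epsilon_t$ collects the $\nabla_x\chi$ cross-variation terms and a martingale, all $o(1)$ in probability by Cauchy--Schwarz against (\ref{Eq:Ubound}). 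Substituting this identity back into the integral form of $\bar{X}^\epsilon$ and passing to the limit reproduces exactly $\lambda_1$.

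\textbf{Step 3 (measure-viability and the main obstacle).} For (\ref{Eq:AccumulationPointsMeasureViable}) I apply It\^o's formula to $F(\bar{Y}^\epsilon_s)$ for $F\in\mathcal{D}(\mathcal{L}^i)$, multiplied by a regime-appropriate prefactor ($\frac{\delta^2}{\epsilon}$ in Regime 1, $\delta$ in Regimes 2 and 3), so that the leading drift becomes $\mathcal{L}^i_{u_1^\epsilon,u_2^\epsilon,\bar{X}^\epsilon_s}F(\bar{Y}^\epsilon_s)$. The boundary contribution vanishes on account of the prefactor and boundedness of $F$, the Brownian part vanishes in $L^2$, and rewriting the surviving drift integral against $\mathrm{P}^{\epsilon,\Delta}$ and passing to the limit yields (\ref{Eq:AccumulationPointsMeasureViable}). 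The main technical obstacle is the Regime 1 corrector calculation: the $C^2$-regularity and global bounds on $\chi$ guaranteed by Condition \ref{A:Assumption1} and \cite{BLP} are needed to control the $\nabla_x\chi$ cross-variation terms, and the uniformity of $\mathcal{R}^\epsilon_t$ with respect to the control sequence must be obtained by Cauchy--Schwarz against (\ref{Eq:Ubound}); the legitimacy of replacing $\bar{X}^\epsilon_s$ by $\psi_s$ inside the coefficients uniformly in $u^\epsilon$ follows from the tightness already established and the Lipschitz dependence in $x$ provided by Condition \ref{A:Assumption1}.
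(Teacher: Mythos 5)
Your proposal is correct and follows essentially the same route as the paper: the Regime 1 corrector $\chi$ from (\ref{Eq:CellProblem}) to tame the $\frac{\epsilon}{\delta}b$ term, tightness of the occupation measures from the uniform $L^{2}$ bound (\ref{Eq:Ubound}), and verification of (\ref{Eq:AccumulationPointsMeasureViable}) by scaling the fast-motion generator and its martingale by a vanishing regime-dependent prefactor. The only cosmetic differences are that the paper carries out the drift identification through the martingale-problem formulation with test functions $\psi_{\ell}=\chi_{\ell}F_{x_{\ell}}$ and delegates the time-scale-separation/averaging replacement (your ``absorb the dependence on $\bar{X}^{\epsilon}$ into $\psi_{s}$'' step) to Lemma 3.2 of \cite{DupuisSpiliopoulos}, and it uses the prefactor $g(\epsilon)=\epsilon$ rather than $\delta$ in Regime 2, which is equivalent since $\epsilon/\delta\to\gamma\in(0,\infty)$.
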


\begin{proof}
We will only present the proof for Regime $1$, since the proof for the other regimes is completely analogous. We start by proving tightness and then we identify the limit.

Tightness of $\{\bar{X}^{\epsilon},\epsilon>0\}$
follows if we establish  that for every $\eta>0$
\begin{equation}
\lim_{\rho\downarrow0}\limsup_{\epsilon\downarrow0}\mathbb{P}_{x_{0},y_{0}}\left[
\sup_{|t_{1}-t_{2}|<\rho,0\leq t_{1}<t_{2}\leq T}|\bar{X}^{\epsilon
}(t_{1})-\bar{X}^{\epsilon}(t_{2})|\geq\eta\right]  =0.\label{Eq:TightnessX}
\end{equation}

The difficulty to obtain this estimate in Regime $1$ is due to the unclear behavior of the term
$\frac{\epsilon}{\delta}\int_{0}^{t}b\left(  \bar{X}^{\epsilon}(s),\bar{Y}^{\epsilon}(s)\right)ds$ as $\epsilon/\delta\uparrow\infty$. We treat this term by applying It\^{o} formula to $\chi(x,y)$, the solution to the cell problem (\ref{Eq:CellProblem}). By doing so, we can rewrite the first component of (\ref{Eq:Main2}), omitting function arguments in some places for notational convenience,  as

\begin{eqnarray}
d\bar{X}^{\epsilon}(s)&=&  \lambda_{1}\left(  \bar{X}^{\epsilon}(s),\bar{Y}^{\epsilon}(s), u_{1}(s),u_{2}(s)\right) ds\nonumber\\
& &\quad +\left(\epsilon \frac{\partial \chi}{\partial x}b+\delta\frac{\partial \chi}{\partial x}\left(c+\sigma u_{1}(s)\right)+\frac{\epsilon\delta}{2}\sigma\sigma^{T}:\frac{\partial^{2} \chi}{\partial x^{2}}+\epsilon\sigma\tau_{1}^{T}:\frac{\partial^{2} \chi}{\partial x\partial y}\right)\left(  \bar{X}^{\epsilon}(s),\bar{Y}^{\epsilon}(s)\right)   ds\nonumber\\
& &\quad+
\left(\sqrt{\epsilon}\left(\sigma+\frac{\partial \chi}{\partial y}\tau_{1}\right)+\sqrt{\epsilon}\delta\frac{\partial \chi}{\partial x}\sigma \right)\left(  \bar{X}^{\epsilon}(s),\bar{Y}^{\epsilon}(s)\right)
dW(s)+\sqrt{\epsilon}\frac{\partial \chi}{\partial y}\tau_{2}\left(  \bar{X}^{\epsilon}(s),\bar{Y}^{\epsilon}(s)\right)dB_{s}\nonumber
\end{eqnarray}

Then, from this representation  and the boundedness of the coefficients and of the derivatives of $\chi$ (Chapter 3, Section 6 of \cite{BLP}), statement (\ref{Eq:TightnessX}) follows, which then gives tightness of $\{\bar{X}^{\epsilon},\epsilon>0\}$.

Tightness of the occupation measures $\{\mathrm{P}%
^{\epsilon,\Delta},\epsilon>0\}$ follows from the bound
\begin{equation}
\sup_{\epsilon\in(0,1]}\mathbb{E}_{x_{0},y_{0}}\left[  g(\mathrm{P}^{\epsilon
,\Delta})\right]  <\infty.\label{Eq:TightnessP}
\end{equation}
for the tightness function $g(r)=\int_{\mathcal{Z}\times\mathcal{Z}\times\mathcal{Y}\times\lbrack0,T]}\left[\left\Vert
z_{1}\right\Vert ^{2}+\left\Vert
z_{2}\right\Vert ^{2}\right]r(dz_{1}dz_{2}dydt),\hspace{0.2cm}r\in\mathcal{P}(\mathcal{Z}^{2}%
\times\mathcal{Y}\times\lbrack0,T])$. To be precise, notice that the function $g(r)$ is a tightness function since it is bounded from below, with relatively compact level sets
$G_{k}=\{r\in\mathcal{P}(\mathcal{Z}^{2}\times\mathcal{Y}\times\lbrack
0,T]):g(r)\leq k\}$ for each $k<\infty$. Then, by Theorem A.19 in \cite{DupuisEllis}, it is known that tightness of $\{\mathrm{P}^{\epsilon,\Delta},\epsilon>0\}$ holds if
(\ref{Eq:TightnessP}) holds. However, this follows directly from Condition (\ref{Eq:Ubound}), due to the estimate
\begin{align}
\sup_{\epsilon\in(0,1]}\mathbb{E}_{x_{0},y_{0}}\left[  g(\mathrm{P}^{\epsilon
,\Delta})\right]   &  =\sup_{\epsilon\in(0,1]}\mathbb{E}_{x_{0},y_{0}}\left[
\int_{\mathcal{Z}^{2}\times\mathcal{Y}\times\lbrack0,T]}\left[\left\Vert z_{1}\right\Vert
^{2}+\left\Vert z_{2}\right\Vert
^{2}\right]\mathrm{P}^{\epsilon,\Delta}(dz_{1}dz_{2}dydt)\right] \nonumber\\
&  =\sup_{\epsilon\in(0,1]}\mathbb{E}_{x_{0},y_{0}}\int_{0}^{T}\frac{1}{\Delta}%
\int_{t}^{t+\Delta}\left[\left\Vert u_{1}^{\epsilon}(s)\right\Vert ^{2}+\left\Vert u_{2}^{\epsilon}(s)\right\Vert ^{2}\right]dsdt\nonumber\\
&  <\infty.\nonumber
\end{align}

Hence, we have established that the family $\{(\bar{X}^{\epsilon},\mathrm{P}^{\epsilon,\Delta
}),\epsilon>0\}$ is tight. Next, we prove that any accumulation point will be a viable pair according to Definition \ref{Def:ViablePair}. Tightness guarantees that for any
subsequence of $\epsilon>0$ there exists subsubsequence that converges, in distribution, to some limit
$(\bar{X},\mathrm{P})$ such that
\[
(\bar{X}^{\epsilon},\mathrm{P}^{\epsilon,\Delta})\rightarrow(\bar
{X},\mathrm{P})
\]
Making use of
the Skorokhod representation theorem  we may assume, by the introduction of another probability space, which we omit writing in the notation, that this
convergence holds with probability $1$, (Theorem 1.8
\cite{EithierKurtz}). Fatou's Lemma gives us
\begin{equation}
\mathbb{E}_{x_{0},y_{0}}\int_{\mathcal{Z}\times\mathcal{Z}\times\mathcal{Y}\times\lbrack
0,T]}\left[\left\Vert z_{1}\right\Vert ^{2}+\left\Vert z_{2}\right\Vert ^{2}\right]\mathrm{P}(dz_{1}dz_{2}dydt)<\infty
\label{A:Assumption2_1}%
\end{equation}
which then implies that $\int_{\mathcal{Z}\times\mathcal{Z}\times\mathcal{Y}\times\lbrack0, T]}\left[\left\Vert
z_{1}\right\Vert ^{2}+\left\Vert
z_{2}\right\Vert ^{2}\right]\mathrm{P}(dz_{1}dz_{2}dydt)<\infty$ \ w.p.1.

Thus, we need now to prove that the limit point $(\bar{X},\mathrm{P})\in\mathcal{V}_{(\lambda_{1},\mathcal{L}^{1})}$,
according to Definition \ref{Def:ViablePair}. Some of the computations here are analogous to those of the proof of Theorem 2.8 in \cite{DupuisSpiliopoulos}. We recall for completeness the main arguments appropriately modified to cover the more general case that is considered here.

We start with (\ref{Eq:AccumulationPointsProcessViable}). This follows from the characterization of solutions to SDE's via the martingale problem formulation and the averaging principle
\cite{BLP, EithierKurtz}.  We fix a collection of elements $p,q,S,t_{i},\tau,F,\phi_{j},\zeta$ that are defined as follows. $S,t_{i},\tau\geq0,i\leq q$ are such that $t_{i}\leq S\leq S+\tau
\leq T$. The real valued functions $F,\phi_{j}$ are smooth and have
compact support. Moreover, $\zeta$ is a real valued, bounded and continuous function with
compact support on $(\mathbb{R}^{m})^{q}\times\mathbb{R}^{pq}$.

Then, we define $\bar{\mathcal{A}}_{t}^{\epsilon,\Delta}$ by
\begin{equation}
\bar{\mathcal{A}}_{t}^{\epsilon,\Delta}F(x)=\int_{\mathcal{Z}\times\mathcal{Z}\times
\mathcal{Y}}\lambda(x,y,z_{1},z_{2})\nabla F(x)\mathrm{P}_{t}^{\epsilon,\Delta}(dz_{1}dz_{2}dy)
\label{Eq:PrelimitOperator}%
\end{equation}
where
\[
\mathrm{P}_{t}^{\epsilon,\Delta}(dz_{1}dz_{2}dy)=\frac{1}{\Delta}\int_{t}^{t+\Delta
}1_{dz_{1}}(u_{1}^{\epsilon}(s))1_{dz_{2}}(u_{2}^{\epsilon}(s))1_{dy}\left(  \bar{Y}^{\epsilon}(s)\mod \lambda\right)  ds.
\]
With these definitions at hand,  (\ref{Eq:AccumulationPointsProcessViable}) follow, if we prove that, as $\epsilon\downarrow0$,
\begin{equation}
\mathbb{E}_{x_{0},y_{0}}\left[  \zeta(\bar{X}_{t_{i}}^{\epsilon},(\mathrm{P}%
^{\epsilon,\Delta},\phi_{j})_{t_{i}},i\leq q,j\leq p)\left[  F(\bar{X}^{\epsilon}(S+\tau))-F(\bar{X}^{\epsilon}(S))-\int_{S}^{S+\tau}%
\bar{\mathcal{A}}_{t}^{\epsilon,\Delta}F(\bar{X}^{\epsilon}(t))dt\right]
\right]  \rightarrow0 \label{Eq:MartingaleProblemRegime1_1}%
\end{equation}
and, in probability,
\begin{equation}
\int_{S}^{S+\tau}\bar{\mathcal{A}}_{s}^{\epsilon,\Delta}F(\bar{X}^{\epsilon}(s))ds-\int_{\mathcal{Z}\times\mathcal{Z}\times\mathcal{Y}\times\lbrack S,S+\tau
]}\lambda_{1}(\bar{X}_{s},y,z_{1},z_{2})\nabla F(\bar{X}(s))\bar{\mathrm{P}}(dz_{1}dz_{2}dyds)\rightarrow0.
\label{Eq:MartingaleProblemRegime1_2a}%
\end{equation}
Then,  the pair $(\bar{X},\bar{\mathrm{P}})$ solves the martingale problem associated with (\ref{Eq:AccumulationPointsProcessViable}), which implies the latter.

So, let us first prove (\ref{Eq:MartingaleProblemRegime1_1}). Notice that under the topology of weak convergence and due to the fact that the last marginal of
$\mathrm{P}$ is Lebesgue measure w.p.1., we have that for every  $t\in\lbrack0,T]$ and for every real valued, compactly supported and continuous function $\phi$
\[
(\mathrm{P}^{\epsilon,\Delta},\phi)_{t}\rightarrow(\mathrm{P},\phi)_{t}\text{
w.p.}1.
\]

Next, we recall the solution
$\chi(x,y)$ to the cell problem (\ref{Eq:CellProblem}).  Let $\psi=\{\psi_{1},\ldots,\psi_{d}\}$ be defined by $\psi_{\ell}(x,y)=\chi_{\ell}(x,y)F_{x_{\ell}}(x)$ for $\ell
=1,\ldots,d$. Notice that $\psi_{\ell}(x,y)$ is periodic in every direction in $y$, with period $\lambda$, and satisfies
\begin{equation}
\mathcal{L}_{x}^{1}\psi_{\ell}(x,y)=-b_{\ell}(x,y)F_{x_{\ell}}(x),\quad
\int_{\mathcal{Y}}\psi_{\ell}(x,y)\mu(dy|x)=0. \label{Eq:CellProblem2}%
\end{equation}

Applying It\^{o}'s formula to
$\psi(\bar{X}^{\epsilon}(s),\bar{Y}^{\epsilon}(s))$ gives us that relation
(\ref{Eq:CellProblem2}) and the boundedness of $\chi(x,y)$ and its derivatives
 guarantee the validity of
(\ref{Eq:MartingaleProblemRegime1_1}), if
\begin{equation}
\int_{S}^{S+\tau}\left[  \bar{\mathcal{A}}_{s}^{\epsilon,\Delta}F(\bar{X}^{\epsilon}(s))-\lambda_{1}\left(  \bar{X}^{\epsilon}(s),\bar{Y}^{\epsilon}(s),u_{1}^{\epsilon}(s),u_{2}^{\epsilon}(s)\right)  \nabla F(\bar{X}^{\epsilon}(s))\right]  ds\rightarrow0,\text{ as }\epsilon\downarrow0.
\label{Eq:MartingaleProblemRegime1_2}%
\end{equation}
in probability. However, this is exactly the second statement of Lemma 3.2 in \cite{DupuisSpiliopoulos} by taking as $g(x,y,z_{1},z_{2})=\lambda_{1}(x,y,z_{1},z_{2})\cdot\nabla f(x)$. On the other hand, (\ref{Eq:MartingaleProblemRegime1_2a}) is the first statement of Lemma 3.2 in \cite{DupuisSpiliopoulos}, with the function $g$ as was just specified. These give the proof of
(\ref{Eq:AccumulationPointsProcessViable}).

Next, we establish relation  (\ref{Eq:AccumulationPointsMeasureViable}). Let $\mathcal{A}^{\epsilon}_{z_{1},z_{2},x}$ be the operator associated
with the fast motion $\bar{Y}^{\epsilon}$ in (\ref{Eq:Main2}) with $z_{1}=u_{1}$, $z_{2}=u_{2}$ and $x=\bar{X}^{\epsilon}$ fixed,
 \begin{eqnarray}
\mathcal{A}_{z_{1},z_{2},x}^{\epsilon}F(y)&=&\left[  \frac{\epsilon}{\delta^{2}%
}f(x,y)+\frac{1}{\delta}\left[  g(x,y)+\tau_{1}(x,y)z_{1}+\tau_{2}(x,y)z_{2}\right]  \right]
\cdot\nabla_{y}F(y)+\nonumber\\
& &\hspace{6cm}+\frac{\epsilon}{\delta^{2}}\frac{1}{2}\left(\tau_{1}\tau_{1}^{T}+\tau_{2}\tau_{2}^{T}\right)(x,y):\nabla_{y}%
\nabla_{y}F(y) \label{Def_of_A_gen}%
\end{eqnarray}
for functions $F\in\mathcal{C}^{2}(\mathcal{Y})$. Consider,  $\left\{F_{\ell}:\mathcal{Y}\mapsto\mathbb{R}, \ell\in\mathbb{N}\right\}$ to be a smooth and dense
family in $\mathcal{C}^{2}(\mathcal{Y})$. Then, notice that
\begin{equation*}
 M^{\epsilon}_{t}=F_{\ell}(Y^{\epsilon}(t))-F_{\ell}(y_{0})-\int_{0}^{t}\mathcal{A}^{\epsilon}_{u_{1}(s),u_{2}(s),\bar{X}^{\epsilon}(s)}F_{\ell}(\bar{Y}^{\epsilon}(s))ds
\end{equation*}
is an $\mathfrak{F}_{t}$-martingale. Next, let
\begin{equation*}
g(\epsilon)=\frac{\delta^{2}}{\epsilon}
\end{equation*}
and notice that $g(\epsilon
)\mathcal{A}_{z_{1},z_{2},x}^{\epsilon}$ converges to $\mathcal{L}_{z_{1},z_{2},x}^{1}$ under
Regime $1$, as
$\epsilon\downarrow0$. Based on this observation and denoting $\mathcal{G}_{x,y,z_{1},z_{2}}F_{\ell}(y)=\left[  g(x,y)+\tau_{1}(x,y)z_{1}+\tau_{1}(x,y)z_{2}\right]  \cdot
\nabla_{y}F_{\ell}(y)$
 we write

\begin{align}
& g(\epsilon)M_{t}^{\epsilon}-g(\epsilon)\left[  F_{\ell}(\bar{Y}
^{\epsilon}(t))-F_{\ell}(y_{0})\right] \nonumber\\
&\quad \mbox{} +g(\epsilon
)\left[\int_{0}^{t}\frac
{1}{\Delta}\left[  \int_{s}^{s+\Delta}\mathcal{A}_{u^{\epsilon}_{1}(\rho),u^{\epsilon}_{2}(\rho),\bar
{X}^{\epsilon}(\rho)}^{\epsilon}F_{\ell}(\bar{Y}^{\epsilon}(\rho))d\rho\right]  ds-\int_{0}^{t}\mathcal{A}_{u^{\epsilon}_{1}(s),u^{\epsilon}_{2}(s),\bar{X}^{\epsilon}(s)}^{\epsilon
}F_{\ell}(\bar{Y}^{\epsilon}(s))ds\right]\nonumber\\
&  =-\frac{\delta}{\epsilon}\left(  \int_{0}^{t}\frac{1}{\Delta}\left[
\int_{s}^{s+\Delta}\left[  \mathcal{G}_{\bar{X}^{\epsilon}(\rho),\bar{Y}%
^{\epsilon}(\rho),u^{\epsilon}_{1}(\rho),u^{\epsilon}_{2}(\rho)}F_{\ell}(\bar{Y}^{\epsilon}(\rho)
)-\mathcal{G}_{\bar{X}^{\epsilon}(s),\bar{Y}^{\epsilon}(\rho),u^{\epsilon}_{1}(\rho),u^{\epsilon}_{2}(\rho)
}F_{\ell}(\bar{Y}^{\epsilon}(\rho))\right]  d\rho\right]  ds\right) \nonumber\\
& \quad \mbox{}-\frac{\delta}{\epsilon}\left(  \int_{\mathcal{Z}\times\mathcal{Z}\times
\mathcal{Y}\times\lbrack0,t]}\mathcal{G}_{\bar{X}^{\epsilon}(s),y,z_{1},z_{2}}F_{\ell
}(y)\bar{\mathrm{P}}^{\epsilon,\Delta}(dz_{1}dz_{2}dyds)\right) \nonumber\\
& \quad \mbox{}-\int_{0}^{t}\frac{1}{\Delta}\left[  \int_{s}^{s+\Delta}\left[
\mathcal{L}_{\bar{X}^{\epsilon}{\rho}}^{1}F_{\ell}(\bar{Y}^{\epsilon
}(\rho))-\mathcal{L}_{\bar{X}^{\epsilon}(s)}^{1}F_{\ell}(\bar{Y}^{\epsilon
}(\rho))\right]  d\rho\right]  ds\nonumber\\
&\quad  \mbox{}-\int_{\mathcal{Z}\times\mathcal{Z}\times\mathcal{Y}\times\lbrack0,t]}\mathcal{L}%
_{\bar{X}^{\epsilon}(s)}^{1}F_{\ell}(y)\mathrm{P}^{\epsilon,\Delta}(dz_{1}dz_{2}dydt).
\label{Eq:MartingaleProperty_1}%
\end{align}

Let us now analyze the different terms in (\ref{Eq:MartingaleProperty_1}). In particular
\begin{enumerate}
\item{We have that $g(\epsilon)M^{\epsilon}_{t}\downarrow 0$ as $\epsilon\downarrow 0$ in probability.  Indeed, we can rewrite
 \begin{equation*}
 M_{t}^{\epsilon}=\frac{\sqrt{\epsilon}}{\delta}\left[\int_{0}%
^{t}\nabla_{y}F_{\ell}(\bar{Y}^{\epsilon}(s))\cdot\tau_{1}\left(  \bar{X}^{\epsilon}(s),\bar{Y}^{\epsilon}(s)\right)  dW(s)+\int_{0}%
^{t}\nabla_{y}F_{\ell}(\bar{Y}_{s}^{\epsilon})\cdot\tau_{2}\left(  \bar{X}^{\epsilon}(s),\bar{Y}^{\epsilon}(s)\right)  dB(s)\right], \label{Def:Martingale}%
 \end{equation*}
which allows us to obtain that
$\mathbb{E}_{x_{0},y_{0}}\left[
M_{T}^{\epsilon}\right]  ^{2}\leq C_{0}\frac{1}{g(\epsilon)}$, and so $g(\epsilon)M^{\epsilon}_{t}\downarrow 0$ follows
 from $g(\epsilon)\downarrow0$.}
 \item{Since
$F$ is bounded, we have that  $g(\epsilon)\left[  F_{\ell}(\bar{Y}^{\epsilon
}(t))-F_{\ell}(y_{0})\right]  $ converges to zero uniformly.}
\item{By Condition
\ref{A:Assumption1} and since $\Delta\downarrow
0,\delta/\epsilon\downarrow0$, the term
\[
g(\epsilon
)\left[\int_{0}^{t}\frac
{1}{\Delta}\left[  \int_{s}^{s+\Delta}\mathcal{A}_{u^{\epsilon}_{1}(\rho),u^{\epsilon}_{2}(\rho),\bar
{X}^{\epsilon}(\rho)}^{\epsilon}F_{\ell}(\bar{Y}^{\epsilon}(\rho))d\rho\right]  ds-\int_{0}^{t}\mathcal{A}_{u^{\epsilon}_{1}(s),u^{\epsilon}_{2}(s),\bar{X}^{\epsilon}(s)}^{\epsilon
}F_{\ell}(\bar{Y}^{\epsilon}(s))ds\right]
\]
converges to zero in probability.
}
\item{Tightness of $\left\{\bar{X}^{\epsilon},\epsilon>0\right\}$ and Conditions \ref{A:Assumption1},
(\ref{Eq:Ubound}), imply that the
first and the third term in the right hand side of
(\ref{Eq:MartingaleProperty_1}) converge to zero in probability as $\delta/\epsilon\downarrow0$.}
\item{Uniform integrability of
$\mathrm{P}^{\epsilon,\Delta}$ and the fact that $\delta/\epsilon
\downarrow0$ imply that the
second term on the right hand side of (\ref{Eq:MartingaleProperty_1})
converges to zero in probability.}
\end{enumerate}

Therefore, we finally obtain that

\[\int_{\mathcal{Z}\times\mathcal{Z}\times\mathcal{Y}\times\lbrack0,T]}\mathcal{L}%
_{z_{1},z_{2},\bar{X}_{t}^{\epsilon}}^{i}F_{\ell}(y)\mathrm{P}^{\epsilon,\Delta}(dz_{1}dz_{2}dydt)\rightarrow 0, \quad \textrm{ in probability.}
\]
Then, this implies (\ref{Eq:AccumulationPointsMeasureViable}), by continuity in $t\in[0,T]$ and density of
 $\left\{F_{\ell}:\mathcal{Y}\mapsto\mathbb{R}, \ell\in\mathbb{N}\right\}$.

It remains to prove (\ref{Eq:AccumulationPointsFullMeasureViable}). It is clear, that the analogous
property holds at the prelimit level. Moreover, since  $\mathrm{P}(\mathcal{Z}\times
\mathcal{Z}\times\mathcal{Y}\times\left\{  t\right\}  )=0$ and the map
$t\rightarrow\mathrm{P}(\mathcal{Z}\times\mathcal{Z}\times\mathcal{Y}\times\lbrack0,t])$ is continuous, one can deal with null sets. Thus, (\ref{Eq:AccumulationPointsFullMeasureViable}) follows.

The proof for Regimes $2$ and $3$ is completely analogous with the only exception that for the purposes of the proof (\ref{Eq:AccumulationPointsMeasureViable}), for Regime $2$ we define $g(\epsilon)=\epsilon$ and for Regime $3$, $g(\epsilon)=\delta$. This completes the proof of the theorem.

\end{proof}

\subsection{Large deviations for $\{X^{\epsilon},\epsilon>0\}$}\label{SS:LDP}
In this subsection we present the main large deviations result. The main difference from the case considered in \cite{DupuisSpiliopoulos} is the identification
of the correct viable pair with respect to which the large
deviations principle is expressed to. The proper viable pair in each regime is indicated by Theorem \ref{T:MainTheorem1}.

\begin{theorem}
\label{T:MainTheorem2} Let $\{\left(X^{\epsilon},Y^{\epsilon}\right),\epsilon>0\}$ be the
unique strong solution to (\ref{Eq:Main}) and consider Regime $i=1,2,3$. Assume Condition
\ref{A:Assumption1} and under Regime 1 assume Condition \ref{A:Assumption2}. Define
\begin{equation}
S^{i}(\phi)=\inf_{(\phi,\mathrm{P})\in\mathcal{V}_{(\lambda_{i},\mathcal{L}%
^{i})}}\left[  \frac{1}{2}\int_{\mathcal{Z}\times\mathcal{Z}\times\mathcal{Y}\times\lbrack
0,T]}\left[\left\Vert z_{1}\right\Vert ^{2}+\left\Vert z_{2}\right\Vert ^{2}\right]\mathrm{P}(dz_{1}dz_{2}dydt)\right]  ,
\label{Eq:GeneralRateFunction}%
\end{equation}
with the convention that the infimum over the empty set is $\infty$. The pairs $(\lambda_{i},\mathcal{L}^{i})$ are given in Definitions \ref{Def:ThreePossibleOperators} and \ref{Def:ThreePossibleFunctions}. Then, we have
\begin{enumerate}
 \item The level sets of $S^{i}$ are compact. In particular, for each $s<\infty$, the set
\begin{equation*}
\Phi_{s}^{i}=\{\phi\in\mathcal{C}([0,T];\mathbb{R}^{m}):S^{i}(\phi)\leq s\}
\label{Def:LevelSets}%
\end{equation*}
is a compact subset of $\mathcal{C}([0,T];\mathbb{R}^{m})$.
\item For
every bounded and continuous function $h$ mapping $\mathcal{C}%
([0,T];\mathbb{R}^{m})$ into $\mathbb{R}$
\begin{equation*}
\liminf_{\epsilon\downarrow0}-\epsilon\ln\mathbb{E}_{x_{0},y_{0}}\left[  \exp\left\{
-\frac{h(X^{\epsilon})}{\epsilon}\right\}  \right]  \geq \inf_{\phi\in
\mathcal{C}([0,T];\mathbb{R}^{m})}\left[  S^{i}(\phi)+h(\phi)\right]  .
\end{equation*}
\item In the case of
Regime $3$ assume either that we are in dimension $1$ i.e.,$m=1,d=2$, or that
$g(x,y)=g(y)$ and $\tau_{i}(x,y)=\tau_{i}(y), i=1,2$ for the general
multidimensional case.  Then for
every bounded and continuous function $h$ mapping $\mathcal{C}%
([0,T];\mathbb{R}^{m})$ into $\mathbb{R}$
\begin{equation*}
\limsup_{\epsilon\downarrow0}-\epsilon\ln\mathbb{E}_{x_{0},y_{0}}\left[  \exp\left\{
-\frac{h(X^{\epsilon})}{\epsilon}\right\}  \right]  \leq \inf_{\phi\in
\mathcal{C}([0,T];\mathbb{R}^{m})}\left[  S^{i}(\phi)+h(\phi)\right]  .
\end{equation*}
\end{enumerate}
In other words, under the imposed assumptions,
$\{X^{\epsilon},\epsilon>0\}$ satisfies the large deviations principle with action functional $S^{i}$.
\end{theorem}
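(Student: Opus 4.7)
The plan is to run the Dupuis--Ellis weak convergence machinery, leveraging the variational representation (\ref{Eq:VariationalRepresentation}) and the limit theorem for the controlled pair $(\bar X^\epsilon,\mathrm{P}^{\epsilon,\Delta})$ already established in Theorem \ref{T:MainTheorem1}. By Theorems 1.2.1 and 1.2.3 of \cite{DupuisEllis}, it suffices to prove (i) compactness of level sets of $S^i$ together with (ii) the Laplace lower bound and (iii) the Laplace upper bound for every bounded continuous $h$.

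For compactness of $\Phi^i_s$, I would take a sequence $\phi_n \in \Phi^i_s$ with associated viable pairs $(\phi_n,\mathrm{P}_n)$ whose cost is within $1/n$ of $S^i(\phi_n)\le s$. The uniform bound $\int(\|z_1\|^2+\|z_2\|^2)\,d\mathrm{P}_n\le 2(s+1)$ gives tightness of $\{\mathrm{P}_n\}$ via the same tightness function $g$ used in Theorem \ref{T:MainTheorem1}, and the integral equation (\ref{Eq:AccumulationPointsProcessViable}) together with linear growth of $\lambda_i$ in $z$ gives equicontinuity of $\{\phi_n\}$. Passing to a subsequence and using lower semicontinuity of the cost plus closedness of the viability conditions under weak convergence (exactly as in the proof of Theorem \ref{T:MainTheorem1}) yields a viable limit $(\phi,\mathrm{P})$ with $S^i(\phi)\le s$.

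For the Laplace lower bound, apply (\ref{Eq:VariationalRepresentation}) and pick $u^\epsilon=(u_1^\epsilon,u_2^\epsilon)\in\mathcal{A}$ that is $\epsilon$-optimal. Using $\|h\|_\infty<\infty$ one obtains the a priori bound (\ref{Eq:Ubound}), so Theorem \ref{T:MainTheorem1} applies: along a subsequence, $(\bar X^\epsilon,\mathrm{P}^{\epsilon,\Delta})\Rightarrow(\bar X,\mathrm{P})$ with $(\bar X,\mathrm{P})\in\mathcal{V}_{(\lambda_i,\mathcal{L}^i)}$. Combining Fatou's lemma for the quadratic cost (which is lower semicontinuous under weak convergence since the integrand is bounded below and convex in $z$), continuity of $h$, and the defining inequality $\tfrac{1}{2}\int\|z\|^2\,d\mathrm{P}+h(\bar X)\ge S^i(\bar X)+h(\bar X)\ge \inf_\phi[S^i(\phi)+h(\phi)]$ then yields the bound.

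The Laplace upper bound is the main obstacle. Given $\eta>0$, choose $\phi^\star$ with $S^i(\phi^\star)+h(\phi^\star)\le \inf_\phi[S^i(\phi)+h(\phi)]+\eta$ and a viable pair $(\phi^\star,\mathrm{P}^\star)$ nearly attaining $S^i(\phi^\star)$. The task is to build admissible controls $u^\epsilon\in\mathcal{A}$ whose controlled slow process $\bar X^\epsilon$ converges to $\phi^\star$ and whose average running cost converges to $\tfrac{1}{2}\int\|z\|^2\,d\mathrm{P}^\star$; then plugging them into (\ref{Eq:VariationalRepresentation}) gives the reverse inequality up to $\eta$. The natural construction is to approximate $\mathrm{P}^\star$ on short time blocks $[k\Delta,(k+1)\Delta]$ by a measurable selection of controls depending on $(x,y)$ in feedback form, exploiting the time-scale separation: on each block one can freeze the slow variable at $\bar X^\epsilon(k\Delta)$, let the fast motion equilibrate under $\mathcal{L}^i_{z_1,z_2,x}$ with the prescribed conditional law, and mollify in the slow variable to get the necessary continuity of $u^\epsilon$ in $x$. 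In Regimes 1 and 2 the non-degeneracy of $\tau_1\tau_1^T+\tau_2\tau_2^T$ makes the fast process ergodic for each frozen slow value and the construction runs uniformly in $x$. In Regime 3 the fast motion is only a controlled ODE on the torus, and a feedback construction in $x$ is only possible when either $d=m+1$ (so one can use the characterization of invariant measures of a one-dimensional vector field on $\mathbb{T}^{d-m}=\mathbb{T}^1$ directly) or the fast dynamics are $x$-independent (so one avoids the measurable selection problem in $x$ altogether); this is exactly the dichotomy imposed in statement (iii). Tightness of the resulting $(\bar X^\epsilon,\mathrm{P}^{\epsilon,\Delta})$ with limit $(\phi^\star,\mathrm{P}^\star)$, established as in Theorem \ref{T:MainTheorem1}, together with boundedness of $h$ and of the quadratic cost on the constructed controls, closes the argument.
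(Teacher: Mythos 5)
Your parts (i) and (ii) are fine and essentially coincide with the paper's argument: compactness of level sets is the standard tightness/lower-semicontinuity argument (the paper simply refers to Lemmas 4.1 and 4.3 of \cite{DupuisSpiliopoulos}), and the lower bound is exactly the chain consisting of the representation (\ref{Eq:VariationalRepresentation}), the a priori bound (\ref{Eq:Ubound}) for $\epsilon$-optimal controls, Theorem \ref{T:MainTheorem1}, and Fatou's lemma.

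The gap is in your upper bound construction. You propose to take a nearly optimal viable pair $(\phi^{\star},\mathrm{P}^{\star})$, freeze the slow variable on time blocks, ``let the fast motion equilibrate under $\mathcal{L}^{i}_{z_{1},z_{2},x}$ with the prescribed conditional law,'' and recover feedback continuity in $x$ by mollifying a measurable selection. This elides the crux of the problem: in Regimes 2 and 3 the generator of the fast motion depends on the control, so the viability constraint (\ref{Eq:AccumulationPointsMeasureViable}) couples the control to the invariant measure of the controlled fast process --- you cannot prescribe the conditional law and the control separately, since the fast process equilibrates to whatever measure is induced by the feedback control you actually implement. Consequently, mollifying a measurable selection in $x$ perturbs \emph{both} the limiting drift $\int\lambda_{i}\,d\mu$ and the running cost $\int\|v\|^{2}d\mu$ through the induced change of invariant measure, and near-optimality is not automatically preserved; moreover, without Lipschitz regularity in $y$ and continuity in $x$ of the selected control (plus weak continuity in $x$ of the associated invariant measure) neither the well-posedness of the controlled system (\ref{Eq:Main2}) with that feedback control nor the averaging step that gives $\bar X^{\epsilon}\Rightarrow\phi^{\star}$ is justified. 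The paper supplies precisely these missing ingredients: it first passes from the relaxed to the ordinary control formulation (Lemma \ref{T:MainTheorem21}), mollifies the \emph{trajectory} $\psi$ so that $\dot\psi$ is piecewise constant (using continuity and finiteness of the local rate function $L_{i}$), and then invokes Theorem \ref{T:ExplicitSolutionRegime1}, which asserts the existence of a minimizing feedback control $\bar u_{\dot\psi_{t}}(x,y)$ that is bounded, continuous in $x$ and Lipschitz in $y$, with an associated invariant measure weakly continuous in $x$; the feedback control $\bar u^{\epsilon}(t)=\bar u_{\dot\psi_{t}}(\bar X^{\epsilon}(t),\bar Y^{\epsilon}(t))$ is then fed directly into (\ref{Eq:VariationalRepresentation}) and standard averaging closes the bound. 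Your explanation of the Regime 3 dichotomy is in the right spirit, but the actual obstruction in the paper is the unavailability of the needed $x$-regularity of the nearly optimal control in the multidimensional $x$-dependent case (in the $x$-independent multidimensional case the paper works with a minimizing relaxed control $\mathrm{P}\in\mathcal{A}^{3,r}_{\beta}$ rather than a feedback selection); your block-freezing scheme would face exactly this same obstruction, and as written it does not resolve it.
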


For the sake of presentation, the proof of Theorem \ref{T:MainTheorem2} is deferred to the end of this section. In the case of Regime $1$ we can get an explicit characterization of the rate function.

\begin{theorem}
\label{T:MainTheorem3} Let $\{\left(X^{\epsilon},Y^{\epsilon}\right),\epsilon>0\}$ be the unique strong
solution to (\ref{Eq:Main}) and consider Regime $1$. Under Conditions
\ref{A:Assumption1} and \ref{A:Assumption2}, $\{X^{\epsilon},\epsilon>0\}$
satisfies a large deviations principle with rate function
\begin{equation*}
S(\phi)=%
\begin{cases}
\frac{1}{2}\int_{0}^{T}(\dot{\phi}(s)-r(\phi(s)))^{T}q^{-1}(\phi(s)%
)(\dot{\phi}(s)-r(\phi(s)))ds & \text{if }\phi\in\mathcal{AC}%
([0,T];\mathbb{R}^{m}) \text{  and } \phi(0)=x_{0}\\
+\infty & \text{otherwise.}%
\end{cases}
\label{ActionFunctional1}%
\end{equation*}

\end{theorem}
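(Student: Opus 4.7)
The plan is to reduce the abstract variational formula $S^{1}(\phi)$ from Theorem \ref{T:MainTheorem2} to the explicit quadratic form by carrying out the minimization over $\mathrm{P}$ in closed form, using that in Regime~$1$ the fast-motion constraint forces the $y$-marginal of $\mathrm{P}$ to be the invariant measure $\mu(dy\,|\,\phi(t))$.

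First I would observe that $S(\phi)=+\infty$ unless $\phi$ is absolutely continuous with $\phi(0)=x_{0}$: any viable pair $(\phi,\mathrm{P})$ satisfies (\ref{Eq:AccumulationPointsProcessViable}), and by the Cauchy--Schwarz inequality combined with square-integrability of $\mathrm{P}$ and boundedness of $\lambda_{1}$ (via boundedness of $c,g,\sigma,\tau_{1},\tau_{2}$ and of the derivatives of $\chi$ from Chapter~3 of \cite{BLP}), $\phi$ is automatically absolutely continuous with $\phi(0)=x_{0}$, so the set over which we infimize is non-empty exactly in the case $\phi\in\mathcal{AC}([0,T];\mathbb{R}^{m})$, $\phi(0)=x_{0}$. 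Next, I would invoke constraint (\ref{Eq:AccumulationPointsMeasureViable}) with the operator $\mathcal{L}^{1}_{\phi(s)}$ (which does not depend on $z_{1},z_{2}$): the relation $\int_{0}^{t}\!\int \mathcal{L}^{1}_{\phi(s)}F(y)\,\mathrm{P}(dz_{1}dz_{2}dy\,ds)=0$ for every $F\in\mathcal{C}^{2}(\mathcal{Y})$, combined with uniqueness of the invariant measure of $\mathcal{L}^{1}_{x}$, forces the time-$s$ $(y)$-marginal of $\mathrm{P}$ to equal $\mu(dy\,|\,\phi(s))$. Disintegrating, write $\mathrm{P}(dz_{1}dz_{2}dy\,ds)=\eta_{y,s}(dz_{1}dz_{2})\,\mu(dy\,|\,\phi(s))\,ds$ for some stochastic kernel $\eta_{y,s}$.

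With this reduction, the cost and the constraint become
\begin{align*}
S^{1}(\phi)&=\inf_{\eta}\tfrac{1}{2}\int_{0}^{T}\!\int_{\mathcal{Y}}\!\int_{\mathcal{Z}^{2}}\bigl[\|z_{1}\|^{2}+\|z_{2}\|^{2}\bigr]\eta_{y,s}(dz_{1}dz_{2})\,\mu(dy\,|\,\phi(s))\,ds,\\
\dot{\phi}(s)&=\int_{\mathcal{Y}}\!\int_{\mathcal{Z}^{2}}\lambda_{1}(\phi(s),y,z_{1},z_{2})\,\eta_{y,s}(dz_{1}dz_{2})\,\mu(dy\,|\,\phi(s)),
\end{align*}
which by strict convexity of $z\mapsto\|z\|^{2}$ is minimized by a Dirac kernel $\eta_{y,s}=\delta_{(z_{1}^{*}(y,s),\,z_{2}^{*}(y,s))}$. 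Writing $A(x,y)=\sigma(x,y)+\frac{\partial\chi}{\partial y}(x,y)\tau_{1}(x,y)$ and $C(x,y)=\frac{\partial\chi}{\partial y}(x,y)\tau_{2}(x,y)$, and setting $r(x)=\int_{\mathcal{Y}}\!\bigl[c(x,y)+\frac{\partial\chi}{\partial y}(x,y)g(x,y)\bigr]\mu(dy\,|\,x)$, the constraint reads
\[
\dot{\phi}(s)-r(\phi(s))=\int_{\mathcal{Y}}\bigl[A(\phi(s),y)z_{1}^{*}(y,s)+C(\phi(s),y)z_{2}^{*}(y,s)\bigr]\mu(dy\,|\,\phi(s)).
\]

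The remaining step is a pointwise (in $s$) finite-dimensional Lagrange-multiplier computation: minimize $\tfrac{1}{2}\int(\|z_{1}(y)\|^{2}+\|z_{2}(y)\|^{2})\mu(dy\,|\,x)$ over $L^{2}(\mu(\cdot|x))$ functions $(z_{1},z_{2})$ subject to the linear constraint above with fixed $v\in\mathbb{R}^{m}$. Introducing a multiplier $\xi\in\mathbb{R}^{m}$, the Euler--Lagrange equations give $z_{1}^{*}(y)=A^{T}(x,y)\xi$, $z_{2}^{*}(y)=C^{T}(x,y)\xi$. Substituting back, $\xi=q^{-1}(x)(\dot{\phi}(s)-r(\phi(s)))$ where
\[
q(x)=\int_{\mathcal{Y}}\bigl[A(x,y)A^{T}(x,y)+C(x,y)C^{T}(x,y)\bigr]\mu(dy\,|\,x),
\]
and the minimum cost equals $\tfrac{1}{2}(\dot{\phi}-r(\phi))^{T}q^{-1}(\phi)(\dot{\phi}-r(\phi))$. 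Integrating in $s$ gives the claimed formula, and the large deviations principle itself is Theorem \ref{T:MainTheorem2}.

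The main obstacle I anticipate is justifying that $q(x)$ is uniformly positive-definite so that $q^{-1}(x)$ exists and the minimization is well posed. This must be extracted from the uniform nondegeneracy of $\sigma\sigma^{T}$ and $\tau_{1}\tau_{1}^{T}+\tau_{2}\tau_{2}^{T}$ in Condition \ref{A:Assumption1}(ii): for any $\eta\in\mathbb{R}^{m}$, $\eta^{T}q(x)\eta=\int_{\mathcal{Y}}(\|A^{T}(x,y)\eta\|^{2}+\|C^{T}(x,y)\eta\|^{2})\mu(dy\,|\,x)$, and one checks that $A A^{T}+CC^{T}=\sigma\sigma^{T}+\frac{\partial\chi}{\partial y}(\tau_{1}\tau_{1}^{T}+\tau_{2}\tau_{2}^{T})\frac{\partial\chi}{\partial y}^{T}+\sigma\tau_{1}^{T}\frac{\partial\chi}{\partial y}^{T}+\frac{\partial\chi}{\partial y}\tau_{1}\sigma^{T}$, whose integral against $\mu$ dominates a uniformly positive multiple of the identity because the $\sigma\sigma^{T}$ summand already does so. A secondary technical point will be a density/measurability argument to pass from the pointwise-in-$s$ minimization to a measurable selection of $(z_{1}^{*}(\cdot,s),z_{2}^{*}(\cdot,s))$, which is straightforward given smoothness of $A,C,r,q^{-1}$ inherited from Condition \ref{A:Assumption1} and the regularity of $\chi$.
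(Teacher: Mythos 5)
Your proposal is essentially the paper's own proof: the paper obtains Theorem \ref{T:MainTheorem3} by combining Lemma \ref{T:MainTheorem21} (decompose $\mathrm{P}$ into marginals, use that for Regime $1$ the operator $\mathcal{L}^{1}_{x}$ is control-independent so the $y$-marginal must be the unique invariant measure $\mu(dy|x)$, and pass from relaxed to ordinary controls by convexity) with Theorem \ref{T:ExplicitSolutionRegime1}, whose omitted proof is exactly the Lagrange-multiplier computation you carry out, yielding $r(x)$, $q(x)$ and the optimal control $\bigl(A^{T}q^{-1}(\beta-r),\,C^{T}q^{-1}(\beta-r)\bigr)$; the LDP itself is then Theorem \ref{T:MainTheorem2}. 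So the route, the reduction, and the closed-form minimization all match.

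The one step in your write-up that does not hold as stated is the nondegeneracy argument for $q(x)$. From $\eta^{T}q(x)\eta=\int_{\mathcal{Y}}\bigl(\Vert A^{T}\eta\Vert^{2}+\Vert C^{T}\eta\Vert^{2}\bigr)\mu(dy|x)$ you cannot conclude positivity ``because the $\sigma\sigma^{T}$ summand already does so'': the expansion of $AA^{T}+CC^{T}$ contains the cross terms $\sigma\tau_{1}^{T}\bigl(\tfrac{\partial\chi}{\partial y}\bigr)^{T}+\tfrac{\partial\chi}{\partial y}\tau_{1}\sigma^{T}$, which are not sign-definite, and pointwise $\Vert A^{T}\eta\Vert^{2}$ can even vanish where $\tau_{1}^{T}(\tfrac{\partial\chi}{\partial y})^{T}\eta=-\sigma^{T}\eta$. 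A correct argument must use a global ingredient, e.g.\ that $\nabla_{y}(\chi^{T}\eta)$ has zero average over the torus by periodicity (together with the boundedness above and below of the density of $\mu$), or the representation of $q$ as the asymptotic variance of the martingale part of the homogenized limit, as in the analogous Theorem 5.2 of \cite{DupuisSpiliopoulos} to which the paper defers; note the paper itself does not reprove this point. Apart from this (flagged but misargued) technical point, the proposal is correct and coincides with the paper's argument.
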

\begin{proof}
It follows by putting Lemma \ref{T:MainTheorem21} and Theorem \ref{T:ExplicitSolutionRegime1} below together.
\end{proof}

Notice that the coefficients $r(x)$ and $q(x)$ that enter into the
action functional for Regime $1$ are those obtained if we had first
taken to (\ref{Eq:Main}) $\delta\downarrow 0$ with $\epsilon$ fixed
and then consider the large deviations for the homogenized system.
Indeed if $\epsilon=1$, then $X^{\epsilon,\delta}=X^{1,\delta}$ can be shown to converge
weakly in the space of continuous functions in
$\mathcal{C}([0,T];\mathbb{R}^{m})$, as $\delta\downarrow 0$, to the solution of an SDE with
drift coefficient $r(x)$ and diffusion coefficient $q^{1/2}(x)$.
This can be derived via standard homogenization theory
\cite{BLP,PS}. The action functional for a small noise diffusion with drift coefficient $r(x)$ and diffusion coefficient $\sqrt{\epsilon}q^{1/2}(x)$ is the one given by Theorem \ref{T:MainTheorem3}. This is in accordance to intuition since under Regime
$1$, $\delta$ goes to zero faster, so homogenization should occur
first as it indeed does.

\begin{remark}
Notice that if we set $f=b,g=c,\sigma=\tau_{1}$ and $\tau_{2}=0$ in
the statements of Theorems
\ref{T:MainTheorem1}-\ref{T:MainTheorem3}, then one recovers the
results of \cite{DupuisSpiliopoulos}.
\end{remark}

The reader may wonder, why we have imposed further structural restrictions for part (iii) of the theorem for Regime $3$. This is because we were not able to prove some smoothness
requirements of the constructed nearly optimal controls in the prelimit level with respect to  $x$ in the general multidimensional case when the coefficients $g,\tau_{1}$ and $\tau_{2}$ depend on $x$.
Similar issues arise in the case considered in \cite{DupuisSpiliopoulos} and  are discussed in detail there. However, observing the viable pairs that characterize the large deviations
principle for Regimes $2$ and $3$, $(\lambda_{2},\mathcal{L}^{2})$
and $(\lambda_{3},\mathcal{L}^{3})$  respectively, we notice that
Regime $3$ can be thought of as a limiting case of Regime $2$ with
$\gamma=0$. So, one is led to conjecture that the extra assumptions for Regime $3$ are not necessary, even though we currently do not have a proof for this.

Next, we proceed with the proof of Theorem \ref{T:MainTheorem2}. The proof is based on two intermediate results, which we present now. Lemma \ref{T:MainTheorem21} allows us to equivalently  rewrite the action functional $S^{i}$.

Let $\mathcal{AC}([0,T];\mathbb{R}^{m})$ be the set of absolutely continuous functions from $[0,T]$ to $\mathbb{R}^{m}$.

\begin{lemma}
\label{T:MainTheorem21} Consider the set-up of Theorem \ref{T:MainTheorem2}. Then, for $i=1,2,3$ we have
\begin{align}
S^{i}(\phi)=
\begin{cases}
\int_{0}^{T}L_{i}(\phi(s),\dot{\phi}(s))ds & \text{if }\phi
\in\mathcal{AC}([0,T];\mathbb{R}^{m}) \text{  and } \phi(0)=x_{0}\\
+\infty & \text{otherwise }.
\end{cases}
\nonumber
\end{align}
where
\begin{equation}
L_{i}(x,\beta)=\inf_{(v,\mu)\in\mathcal{A}_{x,\beta}^{i}}\left\{
\frac{1}{2}\int_{\mathcal{Y}}\left\Vert v(y)\right\Vert ^{2}\mu(dy)\right\}.
\label{Def:LocalRateFunction}%
\end{equation}
with
\begin{align*}
\mathcal{A}_{x,\beta}^{i}  &  =\left\{
v(\cdot)=(v_{1}(\cdot),v_{2}(\cdot)):\mathcal{Y}\mapsto
\mathbb{R}^{2\kappa},\mu\in\mathcal{P}(\mathcal{Y})\hspace{0.1cm}:\hspace
{0.1cm}(v,\mu)\text{ satisfy }\int_{\mathcal{Y}}\mathcal{L}_{v_{1}(y),v_{2}(y),x}%
^{i}F(y)\mu(dy)=0\right. \\
&  \left.  \hspace{1.8cm}\text{ for all }F\in \mathcal{D}\left(\mathcal{L}_{v_{1},v_{2},x}^{i} \right),
\int_{\mathcal{Y}}\left\Vert v(y)\right\Vert ^{2}\mu(dy)<\infty\text{ and
}\beta=\int_{\mathcal{Y}}\lambda_{i}(x,y,v_{1}(y),v_{2}(y))\mu(dy)\right\}.
\end{align*}
\end{lemma}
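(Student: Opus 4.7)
The plan is to prove two inequalities: that $S^{i}(\phi)\geq \int_{0}^{T}L_{i}(\phi(s),\dot\phi(s))ds$ for any $\phi\in\mathcal{AC}([0,T];\mathbb{R}^{m})$ with $\phi(0)=x_{0}$, and that the reverse inequality holds by constructing a viable $P$ from near-optimal local controls. The fact that $S^{i}(\phi)=+\infty$ whenever $\phi\notin\mathcal{AC}$ or $\phi(0)\neq x_{0}$ is immediate from Definition \ref{Def:ViablePair}, since any viable pair $(\phi,P)$ has $\phi$ absolutely continuous starting at $x_{0}$ by (\ref{Eq:AccumulationPointsProcessViable}).

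\emph{Lower bound.} Let $(\phi,P)\in\mathcal{V}_{(\lambda_{i},\mathcal{L}^{i})}$. Using (\ref{Eq:AccumulationPointsFullMeasureViable}), write $P(dz_{1}dz_{2}dydt)=P_{t}(dz_{1}dz_{2}dy)dt$, and then disintegrate $P_{t}(dz_{1}dz_{2}dy)=\eta_{t}(dz_{1}dz_{2}\mid y)\mu_{t}(dy)$, where $\mu_{t}$ is the $\mathcal{Y}$-marginal. For a.e.\ $t$ define
\begin{equation*}
v^{t}(y)=\bigl(v_{1}^{t}(y),v_{2}^{t}(y)\bigr)=\int_{\mathcal{Z}\times\mathcal{Z}}(z_{1},z_{2})\,\eta_{t}(dz_{1}dz_{2}\mid y).
\end{equation*}
Since $\lambda_{i}$ is affine in $(z_{1},z_{2})$ and each $\mathcal{L}^{i}_{z_{1},z_{2},x}$ is affine in $(z_{1},z_{2})$ (in Regime 1 it does not depend on the controls at all), Jensen's inequality gives $\int(\|z_{1}\|^{2}+\|z_{2}\|^{2})\eta_{t}(dz_{1}dz_{2}\mid y)\geq\|v^{t}(y)\|^{2}$, and the viability conditions (\ref{Eq:AccumulationPointsProcessViable})--(\ref{Eq:AccumulationPointsMeasureViable}) reduce, after integration against $\eta_{t}(\cdot\mid y)$, to
\begin{equation*}
\dot\phi(t)=\int_{\mathcal{Y}}\lambda_{i}(\phi(t),y,v_{1}^{t}(y),v_{2}^{t}(y))\,\mu_{t}(dy),\qquad \int_{\mathcal{Y}}\mathcal{L}^{i}_{v_{1}^{t}(y),v_{2}^{t}(y),\phi(t)}F(y)\,\mu_{t}(dy)=0
\end{equation*}
for all admissible $F$. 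Hence $(v^{t},\mu_{t})\in\mathcal{A}^{i}_{\phi(t),\dot\phi(t)}$ for a.e.\ $t$, and Fubini plus the definition of $L_{i}$ yields $\tfrac{1}{2}\int(\|z_{1}\|^{2}+\|z_{2}\|^{2})P(dz_{1}dz_{2}dydt)\geq \int_{0}^{T}L_{i}(\phi(t),\dot\phi(t))dt$. Taking the infimum over viable $P$ gives the lower bound.

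\emph{Upper bound.} Given $\phi\in\mathcal{AC}$ with $\phi(0)=x_{0}$ and $\int_{0}^{T}L_{i}(\phi(t),\dot\phi(t))dt<\infty$, fix $\eta>0$. For each $t$ select $(v^{t},\mu^{t})\in\mathcal{A}^{i}_{\phi(t),\dot\phi(t)}$ attaining the infimum defining $L_{i}(\phi(t),\dot\phi(t))$ to within $\eta$. A standard measurable-selection argument (the constraints defining $\mathcal{A}^{i}_{x,\beta}$ are closed in a suitable topology on pairs $(v,\mu)$, and $t\mapsto(\phi(t),\dot\phi(t))$ is measurable) lets the selection be made jointly measurable in $t$. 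Define
\begin{equation*}
P(dz_{1}dz_{2}dydt)=\delta_{v^{t}(y)}(dz_{1}dz_{2})\,\mu^{t}(dy)\,dt.
\end{equation*}
The constraints in $\mathcal{A}^{i}_{\phi(t),\dot\phi(t)}$ translate directly into (\ref{Eq:AccumulationPointsProcessViable})--(\ref{Eq:AccumulationPointsFullMeasureViable}), so $(\phi,P)\in\mathcal{V}_{(\lambda_{i},\mathcal{L}^{i})}$, and
\begin{equation*}
\tfrac{1}{2}\!\int(\|z_{1}\|^{2}+\|z_{2}\|^{2})P(dz_{1}dz_{2}dydt)=\int_{0}^{T}\tfrac{1}{2}\!\int_{\mathcal{Y}}\|v^{t}(y)\|^{2}\mu^{t}(dy)\,dt\leq \int_{0}^{T}L_{i}(\phi(t),\dot\phi(t))\,dt+\eta T.
\end{equation*}
Letting $\eta\downarrow 0$ gives $S^{i}(\phi)\leq\int_{0}^{T}L_{i}(\phi(t),\dot\phi(t))dt$.

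The main obstacle is the measurable selection in the upper bound. One has to topologize pairs $(v,\mu)$ so that the map $(x,\beta)\mapsto\mathcal{A}^{i}_{x,\beta}$ has measurable graph, and then invoke a Kuratowski--Ryll-Nardzewski type theorem; the square-integrability constraint $\int\|v\|^{2}d\mu<\infty$ is naturally handled by passing to a weak-topology space of subprobability measures on $\mathcal{Y}\times\mathcal{Z}^{2}$ with finite second $z$-moment, where the constraints become closed. Everything else (disintegration, Jensen, and checking viability of the constructed $P$) is routine, with the key algebraic input being the affine dependence of $\lambda_{i}$ and $\mathcal{L}^{i}$ on the controls.
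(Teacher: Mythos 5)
Your proof is correct and follows essentially the same route as the paper: decompose the viable measure in time, disintegrate over $\mathcal{Y}$, and use Jensen's inequality together with the affine dependence of $\lambda_{i}$ and $\mathcal{L}^{i}$ on $(z_{1},z_{2})$ to pass between relaxed and ordinary controls, with a measurable selection of pointwise near-optimal controls for the converse inequality. The only difference is presentational: the paper routes through an intermediate relaxed local rate function $L_{i}^{r}$ and states the equivalence tersely, whereas you prove the two inequalities directly and explicitly flag the measurable-selection step that the paper leaves implicit.
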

\begin{proof}
The proof of this lemma follows easily by an appropriate rewriting of the corresponding expressions. First, notice that (\ref{Eq:GeneralRateFunction}) can be written in terms of a local rate function
\begin{equation*}
S^{i}(\phi)=\int_{0}^{T}L_{i}^{r}(\phi(s),\dot{\phi}(s))ds,
\label{Eq:GeneralRateFncRegime1}
\end{equation*}
(if $\phi$ is absolutely continuous). This follows from the definition of a viable pair by setting
\begin{equation}
L_{i}^{r}(x,\beta)=\inf_{\mathrm{P}\in\mathcal{A}_{x,\beta}^{i,r}}%
\int_{\mathcal{Z}\times\mathcal{Z}\times\mathcal{Y}}\frac{1}{2}\left[\left\Vert z_{1}\right\Vert
^{2}+\left\Vert z_{2}\right\Vert
^{2}\right]\mathrm{P}(dz_{1}dz_{2}dy), \label{Def:ErgodicControlConstraints1_1Relaxed}%
\end{equation}
where
\begin{align*}
\mathcal{A}_{x,\beta}^{i,r}  &  =\left\{  \mathrm{P}\in\mathcal{P}%
(\mathcal{Z}\times\mathcal{Z}\times\mathcal{Y}):\int_{\mathcal{Z}\times\mathcal{Z}\times\mathcal{Y}}%
\mathcal{L}_{z_{1},z_{2},x}^{i}F(y)\mathrm{P}(dz_{1}dz_{2}dy)=0\text{ for all }F\in \mathcal{D}\left(\mathcal{L}_{z_{1},z_{2},x}^{i} \right)\right. \\
&  \left.  \hspace{2cm}\int_{\mathcal{Z}\times\mathcal{Z}\times\mathcal{Y}}\left[\left\Vert
z_{1}\right\Vert ^{2}+\left\Vert
z_{2}\right\Vert ^{2}\right]\mathrm{P}(dz_{1}dz_{2}dy)<\infty\text{ and }\beta=\int_{\mathcal{Z}%
\times\mathcal{Z}\times\mathcal{Y}}\lambda_{i}(x,y,z_{1},z_{2})\mathrm{P}(dz_{1}dz_{2}dy)\right\}  .
\end{align*}
Second, we note that $\mathrm{P}\in\mathcal{P}(\mathcal{Z}\times\mathcal{Z}\times\mathcal{Y})$ can be decomposed into marginals as follows
\begin{equation*}
\mathrm{P}(dz_{1}dz_{2}dy)=\eta(dz_{1}dz_{2}|y)\mu(dy).
\end{equation*}
This, the convexity of the cost on $(z_{1},z_{2})$ and the affine dependence of $\lambda_{i}$ on $(z_{1},z_{2})$ imply that the relaxed control formulation
(\ref{Def:ErgodicControlConstraints1_1Relaxed}) and the ordinary control formulation (\ref{Def:LocalRateFunction}) are equivalent by taking
\begin{equation*}
v_{i}(y)=\int_{\mathcal{Z}\times\mathcal{Z}}z_{i}\eta(dz_{1}dz_{2}|y).
\end{equation*}
Hence, the statement of the lemma holds.
\end{proof}

Now, we use the representations in Lemma \ref{T:MainTheorem21} to obtain the controls needed in the proof of part (iii) of Theorem \ref{T:MainTheorem2}. In the case of  Regime 1 we can be even more specific and obtain a closed form expression for the variational problem associated to the local rate
function $L_{1}(x,\beta)$ appearing in Lemma \ref{T:MainTheorem21}. The derivation of the closed form expression is based on identifying an optimal control that is then used to prove Theorem \ref{T:MainTheorem2}.
The proof of this statement is based on a straightforward Lagrange multiplier type of analysis of the variational problem (\ref{Def:LocalRateFunction}) for $i=1$ and thus omitted
 (see also Theorem 5.2 in \cite{DupuisSpiliopoulos} for an analogous situation). In the case of Regime 2 and Regime 3, we can obtain that there is pair
$(\bar{v},\mu)$ that attains the infimum in (\ref{Def:LocalRateFunction}).
We collect these statements in the following theorem. Its proof is omitted, since it follows analogously to the corresponding proofs of Theorems 5.2, 6.2 for Regimes 1 and 2 respectively and from Section 7 for Regime 3, of \cite{DupuisSpiliopoulos}.

\begin{theorem}
\label{T:ExplicitSolutionRegime1} Assume Condition \ref{A:Assumption1} and in the case of Regime $1$ assume Condition
\ref{A:Assumption2}. The infimization problem
(\ref{Def:LocalRateFunction}) for $i=1$ has the explicit solution%
\[
L_{1}(x,\beta)=\frac{1}{2}(\beta-r(x))^{T}q^{-1}(x)(\beta-r(x)),
\]
where

\begin{itemize}
\item {$r(x)=\int_{\mathcal{Y}}\left(c(x,y)+\frac{\partial\chi}{\partial y}(x,y)g(x,y)\right)
\mu(dy|x)$,}
\item {$q(x)=\int_{\mathcal{Y}}\left[\left(\sigma+\frac{\partial\chi}{\partial y}\tau_{1}\right)\left(\sigma+\frac{\partial\chi}{\partial y}\tau_{1}\right)^{T}+\left(\frac{\partial\chi}{\partial y}\tau_{2}\right)\left(\frac{\partial\chi}{\partial y}\tau_{2}\right)^{T}\right](x,y)\mu(dy|x),$}
\end{itemize}

and where $\mu(dy|x)$ is the unique invariant measure corresponding to the
operator $\mathcal{L}_{x}^{1}$ and $\chi(x,y)$ is defined by
(\ref{Eq:CellProblem}). The control
\begin{equation*}
\bar{u}(y)=\left(\bar{u}_{1,\beta}(x,y),\bar{u}_{2,\beta}(x,y)\right)=\left(\left(\sigma+\frac{\partial\chi}{\partial y}\tau_{1}\right)^{T}(x,y)q^{-1}(x)(\beta-r(x)), \left(\frac{\partial\chi}{\partial y}\tau_{2}\right)^{T}(x,y)q^{-1}(x)(\beta-r(x))\right)
\end{equation*}
attains the infimum in (\ref{Def:LocalRateFunction}).

In the case of Regime 2 and in the one dimensional setting of Regime 3, there  is a pair $(\bar{u},\bar{\mu})$ that achieves the infimum in (\ref{Def:LocalRateFunction}) such that $\bar{u}=\bar{u}_{\beta}(x,y)$
is, for each fixed $\beta\in\mathbb{R}^{d}$, continuous in $x$, Lipschitz
continuous in $y$ and measurable in $(x,y,\beta)$. Moreover, $\bar{\mu
}(dy)=\bar{\mu}_{\bar{u}}(dy|x)$ is the unique invariant measure corresponding
to the operator $\mathcal{L}_{\bar{u}_{\beta}(x,y),x}^{i}$, $i=2,3$, and it is weakly
continuous as a function of $x$. In the $x-$independent multidimensional case of Regime $3$, there is a $\mathrm{P}\in\mathcal{A}_{\beta}^{3,r}$ that achieves the infimum in (\ref{Def:ErgodicControlConstraints1_1Relaxed}) or equivalently in (\ref{Def:LocalRateFunction}).
\end{theorem}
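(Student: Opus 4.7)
The plan is to handle Regime $1$ by direct computation and Regimes $2$ and $3$ by compactness/convexity arguments. For Regime $1$ the key observation is that, by Definition \ref{Def:ThreePossibleOperators}, the operator $\mathcal{L}^{1}_{x}$ does not depend on the control variables $(z_{1},z_{2})$. Consequently the invariance constraint $\int_{\mathcal{Y}}\mathcal{L}^{1}_{v_{1}(y),v_{2}(y),x}F(y)\mu(dy)=0$ for all $F\in\mathcal{C}^{2}(\mathcal{Y})$ reduces to $\mu$ being invariant for $\mathcal{L}^{1}_{x}$, and by Condition \ref{A:Assumption2} together with the non-degeneracy in Condition \ref{A:Assumption1}(ii) this measure is uniquely determined and equals $\mu(dy|x)$. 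Hence in Regime $1$ the problem (\ref{Def:LocalRateFunction}) becomes a constrained quadratic optimization on the Hilbert space $L^{2}(\mu(\cdot|x);\mathbb{R}^{2\kappa})$ with a single affine constraint indexed by $\beta\in\mathbb{R}^{m}$.

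To carry this out I would apply the Lagrange multiplier method with multiplier $\eta\in\mathbb{R}^{m}$: minimize $\frac{1}{2}\int_{\mathcal{Y}}\|v\|^{2}\mu(dy|x)-\eta^{T}\bigl(\int_{\mathcal{Y}}\lambda_{1}(x,y,v_{1}(y),v_{2}(y))\mu(dy|x)-\beta\bigr)$. Since $\lambda_{1}$ is affine in $(z_{1},z_{2})$ with coefficients $\sigma+\frac{\partial\chi}{\partial y}\tau_{1}$ for $z_{1}$ and $\frac{\partial\chi}{\partial y}\tau_{2}$ for $z_{2}$, the Euler--Lagrange equations give pointwise in $y$ the candidate optimizer $\bar{u}_{1,\beta}(x,y)=(\sigma+\frac{\partial\chi}{\partial y}\tau_{1})^{T}(x,y)\eta$ and $\bar{u}_{2,\beta}(x,y)=(\frac{\partial\chi}{\partial y}\tau_{2})^{T}(x,y)\eta$. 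Substituting back into the constraint and using the definitions of $r(x)$ and $q(x)$ yields the linear equation $\beta-r(x)=q(x)\eta$, which by the nondegeneracy of $\sigma\sigma^{T}+\tau_{i}\tau_{i}^{T}$ (inherited by $q(x)$) inverts to $\eta=q^{-1}(x)(\beta-r(x))$. Convexity of the cost then guarantees this critical point is the unique minimizer, and plugging it back in gives $L_{1}(x,\beta)=\frac{1}{2}(\beta-r(x))^{T}q^{-1}(x)(\beta-r(x))$.

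For Regime $2$ existence of a minimizer is obtained by a standard direct method: take a minimizing sequence $(v^{n},\mu^{n})\in\mathcal{A}^{2}_{x,\beta}$, bound $\int\|v^{n}\|^{2}\mu^{n}(dy)$ uniformly, and pass to a weakly convergent subsequence of the joint measures $\eta^{n}(dz_{1}dz_{2}|y)\mu^{n}(dy)$ as in the relaxed formulation (\ref{Def:ErgodicControlConstraints1_1Relaxed}); compactness of $\mathcal{Y}$ and convexity of the quadratic cost on $(z_{1},z_{2})$ give lower semicontinuity and hence a limit attaining the infimum, after which Jensen's inequality recovers an ordinary (not relaxed) optimizer $(\bar{v},\bar{\mu})$. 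The required continuity of $\bar{v}_{\beta}(x,y)$ in $x$ and Lipschitz regularity in $y$ comes from the nondegeneracy of $\mathcal{L}^{2}_{z_{1},z_{2},x}$: its invariant density solves a uniformly elliptic adjoint equation whose coefficients depend smoothly on $(x,v)$, so standard Schauder/$L^{p}$ estimates transfer regularity from the coefficients to $\bar{\mu}(dy|x)$ and then, via the first-order optimality condition (again a Lagrange-type relation between $\bar{v}$ and the $x$-dependent multiplier associated with $\beta$), to $\bar{v}_{\beta}(x,y)$ itself.

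In Regime $3$ the operator $\mathcal{L}^{3}_{z_{1},z_{2},x}$ is first order, so existence and regularity of the invariant measure are subtler. In the one-dimensional setting the fast dynamics becomes an ODE on $\mathbb{T}^{1}$ whose invariant measure is explicit, and the optimization is carried out fibre-wise in $x$; in the $x$-independent multidimensional case the coefficients do not interact with $x$, so existence of an optimizing $\mathrm{P}\in\mathcal{A}^{3,r}_{\beta}$ follows from the same relaxed-control compactness argument as in Regime $2$. The main obstacle in the fully general multidimensional Regime $3$, and the reason the extra hypotheses are imposed in Theorem \ref{T:MainTheorem2}(iii), is precisely the lack of smoothing: one cannot guarantee that the constructed optimizer is regular enough in $x$ to be used as a feedback control in the prelimit representation. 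This is the step I expect to be the hardest, and is exactly why the theorem states existence of a regular optimizer only in the two specified subcases of Regime $3$.
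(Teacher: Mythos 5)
Your plan follows essentially the same route as the paper, which itself omits the details and describes the Regime 1 case as a straightforward Lagrange multiplier analysis of (\ref{Def:LocalRateFunction}) and defers Regimes 2 and 3 to the analogous results in \cite{DupuisSpiliopoulos}: your pointwise Euler--Lagrange computation reproduces exactly the stated optimal control and the quadratic form $\frac{1}{2}(\beta-r(x))^{T}q^{-1}(x)(\beta-r(x))$, and your relaxed-control compactness/convexity argument plus elliptic regularity for Regimes 2 and 3 is the same strategy used there. The only gloss worth noting is that invertibility of $q(x)$ is asserted rather than proved ("inherited" nondegeneracy), but the paper likewise takes this for granted in stating the theorem.
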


Now we have the necessary tools to prove Theorem \ref{T:MainTheorem2}.

\begin{proof}[Proof of Theorem \ref{T:MainTheorem2}]
Part (i). As in Lemma 4.1 and Lemma 4.3 of \cite{DupuisSpiliopoulos}, one can establish that $\Phi^{i}_{s}$ is precompact and closed, respectively. These two statements, then give
compactness of $\Phi_{s}^{i}$.

Part (ii). We have the following chain of inequalities.
\begin{align*}
&\liminf_{\epsilon\downarrow0}\left(  -\epsilon\ln\mathbb{E}_{x_{0}, y_{0}}\left[
\exp\left\{  -\frac{h(X^{\epsilon})}{\epsilon}\right\}  \right]  \right)
\geq\liminf_{\epsilon\downarrow0}\left(  \mathbb{E}_{x_{0}, y_{0}}\left[  \frac{1}%
{2}\int_{0}^{T}\left[\left\Vert u_{1}^{\epsilon}(t)\right\Vert ^{2}+\left\Vert u_{2}^{\epsilon}(t)\right\Vert ^{2}\right]dt+h(\bar
{X}^{\epsilon})\right]  -\epsilon\right) \nonumber\\
& \qquad \geq\liminf_{\epsilon\downarrow0}\left(  \mathbb{E}_{x_{0},y_{0}}\left[  \frac
{1}{2}\int_{0}^{T}\frac{1}{\Delta}\int_{t}^{t+\Delta}\left[\left\Vert u_{1}
^{\epsilon}(s)\right\Vert ^{2}+\left\Vert u_{2}
^{\epsilon}(s)\right\Vert ^{2}\right]dsdt+h(\bar{X}^{\epsilon})\right]  \right)
\nonumber\\
& \qquad =\liminf_{\epsilon\downarrow0}\left(  \mathbb{E}_{x_{0}, y_{0}}\left[  \frac{1}%
{2}\int_{\mathcal{Z}\times\mathcal{Y}\times\lbrack0,T]}\left[\left\Vert z_{1}\right\Vert
^{2}+\left\Vert z_{2}\right\Vert
^{2}\right]\mathrm{P}^{\epsilon,\Delta}(dz_{1}dz_{2}dydt)+h(\bar{X}^{\epsilon})\right]  \right)
\nonumber\\
& \qquad \geq\mathbb{E}_{x_{0},y_{0}}\left[  \frac{1}{2}\int_{\mathcal{Z}\times
\mathcal{Y}\times\lbrack0,T]}\left[\left\Vert z_{1}\right\Vert
^{2}+\left\Vert z_{2}\right\Vert
^{2}\right]\bar{\mathrm{P}}%
(dz_{1}dz_{2}dydt)+h(\bar{X})\right] \nonumber\\
& \qquad \geq\inf_{(\phi,\mathrm{P})\in\mathcal{V}}\left\{  \frac{1}{2}%
\int_{\mathcal{Z}\times\mathcal{Z}\times\mathcal{Y}\times\lbrack0,T]}\left[\left\Vert z_{1}\right\Vert
^{2}+\left\Vert z_{2}\right\Vert
^{2}\right]\mathrm{P}(dz_{1}dz_{2}dydt)+h(\phi)\right\}  \nonumber\\
& \qquad =\inf_{\phi\in
\mathcal{C}([0,T];\mathbb{R}^{m})}\left[  S^{i}(\phi)+h(\phi)\right].
\label{Eq:LaplacePrincipleLowerBoundGeneral}%
\end{align*}
The first line follows by the representation formula (\ref{Eq:VariationalRepresentation}). The fourth line by Theorem \ref{T:MainTheorem1} and Fatou's Lemma. This establishes the lower bound.

Part (iii). In each regime we follow the same general steps.  What differs from regime to regime, is the form of the viable pair
$\left(\lambda_{i},\mathcal{L}^{i}\right)$ in the definition of the action functional $S^{i}(\cdot)$.   To
prove the Laplace principle upper bound we must show that for all bounded,
continuous functions $h$ mapping $\mathcal{C}([0,T];\mathbb{R}^{m})$ into
$\mathbb{R}$
\begin{equation*}
\limsup_{\epsilon\downarrow0}-\epsilon\ln\mathbb{E}_{x_{0}, y_{0}}\left[
\exp\left\{  -\frac{h(X^{\epsilon})}{\epsilon}\right\}  \right]  \leq
\inf_{\phi\in\mathcal{C}([0,T];\mathbb{R}^{m})}\left[  S^{i}(\phi)+h(\phi)\right]. 
\end{equation*}
By the variational representation formula (\ref{Eq:VariationalRepresentation}), it is enough to prove that

\begin{equation}
\limsup_{\epsilon\downarrow0}\inf_{u=(u_{1},u_{2})\in\mathcal{A}}\mathbb{E}_{x_{0},y_{0}%
}\left[  \frac{1}{2}\int_{0}^{T}\left[\left\Vert u_{1}(s)\right\Vert ^{2}+\left\Vert u_{2}(s)\right\Vert ^{2}\right]ds+h(\bar
{X}^{\epsilon})\right]  \leq
\inf_{\phi\in\mathcal{C}([0,T];\mathbb{R}^{m})}\left[  S^{i}(\phi)+h(\phi)\right]. \label{Eq:LaplacePrincipleUpperBoundRegime1}%
\end{equation}

In each regime, we consider for the limiting variational problem in the Laplace
principle a nearly optimal control pair $(\psi,\mathrm{P})$. In particular, let $\eta>0$ be given and consider $\psi\in\mathcal{C}([0,T];\mathbb{R}^{m})$
with $\psi_{0}=x_{0}$ such that
\begin{equation*}
S^{i}(\psi)+h(\psi)\leq\inf_{\phi\in\mathcal{C}([0,T];\mathbb{R}^{m})}\left[
S^{i}(\phi)+h(\phi)\right]  +\eta<\infty.
\label{Eq:NearlyOptimalTrajectoryRegime1}%
\end{equation*}

Let us consider first Regimes $1$ and $2$ and Regime 3 in dimension $1$. By Lemma \ref{T:MainTheorem21}, the local rate function of $S^{i}$ is $L_{i}(x,\beta)$ given by (\ref{Def:LocalRateFunction}).
By Theorem \ref{T:ExplicitSolutionRegime1}, it is clear that $L_{1}(x,\beta)$ is
continuous and finite at each $(x,\beta)\in\mathbb{R}^{2m}$. As in Theorem 6.3 in \cite{DupuisSpiliopoulos} the same is true for Regime $2$ and as in  Theorem 7.2 of \cite{DupuisSpiliopoulos} for Regime $3$ in dimension 1.
Thus, a
standard mollification argument, allows us to further assume that $\dot{\psi}$ is
piecewise constant (see for example Lemmas 6.5.3 and 6.5.5 in Subsection $6.5$ of
\cite{DupuisEllis}). Then, by Theorem \ref{T:ExplicitSolutionRegime1}, we obtain that with $\beta=\dot{\psi}_{t}$, there is
$\bar{u}_{\dot{\psi}_{t}}(x,y)=(\bar{u}_{1,\dot{\psi}_{t}}(x,y),\bar{u}_{2,\dot{\psi}_{t}}(x,y))$ that is bounded, continuous in $x$ and Lipschitz continuous  in
$y$, and piecewise constant in $t$ such that
\begin{equation}
\bar{u}_{\dot{\psi}_{t}}(x,\cdot)\in\text{argmin}_{v}\left\{  \frac{1}{2}\int_{\mathcal{Y}%
}\left\Vert v(y)\right\Vert ^{2}\mu(dy)\hspace{0.1cm}:(v,\mu)\in
\mathcal{A}_{x,\dot{\psi}_{t}}^{i}\right\}  .
\label{Eq:OptimalControlRegime2_1}%
\end{equation}

Let us denote by $\bar{\mu}_{\bar{u}}(dy)$ the unique invariant
measure corresponding to the operator $\mathcal{L}_{\bar{u},x}^{i}$ for this particular control $\bar{u}$ (which exists due to Theorem \ref{T:ExplicitSolutionRegime1}). The feedback type of control used to prove the upper bound  is then%
\[
\bar{u}^{\epsilon}(t)=\bar{u}_{\dot{\psi}_{t}}\left(\bar{X}^{\epsilon}(t),\bar
{Y}^{\epsilon}(t)\right)=\left(\bar{u}_{1,\dot{\psi}_{t}}\left(\bar{X}^{\epsilon}(t),\bar
{Y}^{\epsilon}(t)\right),\bar{u}_{2,\dot{\psi}_{t}}\left(\bar{X}^{\epsilon}(t),\bar
{Y}^{\epsilon}(t)\right)\right).
\]

By Condition \ref{A:Assumption1} and since $\bar{u}$ is continuous in $x$ and $y$, equation (\ref{Eq:Main2}) has a strong solution
 with $\left(u_{1}(t),u_{2}(t)\right)=\bar{u}^{\epsilon}(t)$.

Then, standard averaging theory (e.g., Section 6, Chapter 3 of \cite{BLP}) and the fact that
$\bar{\mu}_{\bar{u}_{\dot{\psi}_{t}}(x,\cdot)}(\cdot)$ is weakly continuous in $x$ (Theorem
\ref{T:ExplicitSolutionRegime1}) and piecewise continuous in $t$ we have
that $\bar{X}^{\epsilon}\overset{\mathcal{D}}{\rightarrow}\bar{X}$, where
\begin{equation*}
\bar{X}_{t}=x_{0}+\int_{0}^{t}\int_{\mathcal{Y}}\lambda_{i}\left(  \bar{X}%
_{s},y,\bar{u}_{\dot{\psi}_{s}}(\bar{X}_{s},y)\right)  \bar{\mu}_{\bar{u}_{\dot{\psi}_{s}}(\bar{X}_{s}%
,\cdot)}(dy)ds.\label{Eq:LimitingXReg2}%
\end{equation*}

Then for $\psi$ such that
$\psi_{0}=x_{0}$,   (\ref{Eq:OptimalControlRegime2_1}) and the definition of $\mathcal{A}_{x,\dot{\psi}_{t}}^{i}$ gives us that
\[
\bar{X}_{t}=x_{0}+\int_{0}^{t}\dot{\psi}_{s}ds=\psi_{t}\hspace{0.2cm}\text{
for any }t\in\lbrack0,T]\text{, w.p.}1.
\]
Therefore, by the representation formula  (\ref{Eq:VariationalRepresentation}) and (\ref{Eq:OptimalControlRegime2_1}) we finally obtain that
\begin{align}
\limsup_{\epsilon\downarrow0}\left[  -\epsilon\ln\mathbb{E}_{x_{0},y_{0}}\left[
\exp\left\{  -\frac{h(X^{\epsilon})}{\epsilon}\right\}  \right]  \right]   &
=\limsup_{\epsilon\downarrow0}\inf_{(u_{1},u_{2})}\mathbb{E}_{x_{0},y_{0}}\left[  \frac{1}%
{2}\int_{0}^{T}\left[\left\Vert u_{1}(t)\right\Vert ^{2}+\left\Vert u_{2}(t)\right\Vert ^{2}\right]dt+h(\bar{X}^{\epsilon
})\right]  \nonumber\\
&  \leq\limsup_{\epsilon\downarrow0}\mathbb{E}_{x_{0},y_{0}}\left[  \frac{1}{2}%
\int_{0}^{T}\left\Vert \bar{u}^{\epsilon}(t)\right\Vert ^{2}dt+h(\bar
{X}^{\epsilon})\right]  \nonumber\\
&  =\mathbb{E}_{x_{0},y_{0}}\left[  S^{i}(\bar{X})+h(\bar{X})\right]  \nonumber\\
&  \leq\inf_{\phi\in\mathcal{C}([0,T];\mathbb{R}^{m})}\left[  S^{i}(\phi
)+h(\phi)\right]  +\eta.\nonumber
\end{align}
Since $\eta$ is arbitrary, we are done. The proof of part (iii) for Regime 3 follows as in Section 7 of \cite{DupuisSpiliopoulos}, using the fact that there is (by Theorem \ref{T:ExplicitSolutionRegime1} below) a $\mathrm{P}\in\mathcal{A}_{\beta}^{3,r}$ that achieves the infimum in (\ref{Def:ErgodicControlConstraints1_1Relaxed}).
\end{proof}

\section{Importance Sampling}\label{S:IS}

The purpose of this section is to utilize the large deviations results of Section \ref{S:LDP} in order to obtain asymptotically efficient importance sampling schemes for
 quantities like (\ref{Eq:ToBeEstimated}).  Simulation problems involving rare events
unavoidably  have a number of mathematical and computational challenges. As it is well
known, standard Monte Carlo sampling techniques perform very poorly in that
the relative errors under a fixed computational effort grow rapidly as the
event becomes  more rare. Rare event estimation problems for  systems of fast and slow motion present
extra difficulties due to the underlying fast motion  and its
interaction with the intensity of the noise $\epsilon$.
In particular, one needs to take into account the solution to the appropriate cell problem associated with the homogenization
theory of HJB equations in order to guarantee asymptotic optimality. Related simulation results are provided in
\cite{DupuisSpiliopoulosWang,DupuisSpiliopoulosWang2} for the special case $f(x,y)=b(x,y)=-\nabla Q(y)$, $g(x,y)=c(x,y)=-\nabla V(x)$, $\sigma(x,y)=\tau_{1}(x,y)=\textrm{constant}$
and $\tau_{2}(x,y)=0$.

We start by reviewing general things about importance sampling adjusting the discussion to our setting of interest.
Consider a bounded continuous function
$h:\mathbb{R}^{m}\mapsto\mathbb{R}$ and suppose that one is interested in
estimating
\[
\theta(\epsilon)\doteq\mathrm{E}[e^{-\frac{1}{\epsilon}h(X^{\epsilon}%
(T))}|X^{\epsilon}(t_{0})=x_{0}, Y^{\epsilon}(t_{0})=y_{0}]
\]
by Monte Carlo, where the pair of slow and fast motion $(X^{\epsilon}, Y^{\epsilon})$ has initial point $X^{\epsilon}(t_{0})=x_{0}, Y^{\epsilon}(t_{0})=y_{0}$. For Regime $i=1,2,3$, let
\begin{equation}
G_{i}(t_{0},x_{0})\doteq\inf_{\phi\in\mathcal{C}([t_{0},T];\mathbb{R}^{m}),\phi(t_{0})=x_{0}}\left[
S_{t_{0}T}^{i}(\phi)+h(\phi(T))\right]. \label{Eq:VarProb}
\end{equation}
As we shall see below, under regularity conditions, the function $G_{i}(t,x)$ satisfies a PDE of HJB type. Now, depending on the regime of interaction, the contraction principle implies
\begin{equation}
\lim_{\epsilon\rightarrow0}-\epsilon\log\theta(\varepsilon)=G_{i}(t_{0},x_{0}).
\label{LDPprinciple1}%
\end{equation}
Notice that the limit is independent of the initial point $y_{0}$ of the fast motion $Y^{\epsilon}$. This is due to the averaging that takes place, as we shall also see later on in the rigorous proofs.

Let $\Gamma^{\epsilon}(t_{0},x_{0},y_{0})$ be any unbiased estimator of $\theta(\epsilon)$
that is defined on some probability space with probability measure
$\bar{\mathrm{P}}$. With $\bar{\mathrm{E}}$ denoting the expectation operator associated with
$\bar{\mathrm{P}}$ we have that $\Gamma^{\epsilon}(t_{0},x_{0},y_{0})$ is a random
variable such that
\[
\bar{\mathrm{E}}\Gamma^{\epsilon}(t_{0},x_{0},y_{0})=\theta(\epsilon).
\]

In Monte Carlo simulation, one generates a number of independent copies of
$\Gamma^{\epsilon}(t,x,y)$ and the estimate is the sample mean. The specific
number of samples required depends on the desired accuracy, which is measured
by the variance of the sample mean.  Because of
unbiasedness, minimizing the variance is equivalent to minimizing the second
moment. Jensen's inequality implies
\[
\bar{\mathrm{E}}(\Gamma^{\epsilon}(t_{0},x_{0},y_{0}))^{2}\geq(\bar{\mathrm{E}}%
\Gamma^{\epsilon}(t_{0},x_{0},y_{0}))^{2}=\theta(\epsilon)^{2}.
\]
This and  (\ref{LDPprinciple1}) say that
\[
\limsup_{\epsilon\rightarrow0}-\epsilon\log\bar{\mathrm{E}}(\Gamma^{\epsilon
}(t_{0},x_{0},y_{0}))^{2}\leq2G_{i}(t_{0},x_{0}).
\]
Hence, $2G_{i}(t_{0},x_{0})$ is the best possible rate of decay of the second moment.
If
\[
\liminf_{\epsilon\rightarrow0}-\epsilon\log\bar{\mathrm{E}}(\Gamma^{\epsilon
}(t_{0},x_{0},y_{0}))^{2}\geq2G_{i}(t_{0},x_{0}),
\]
then $\Gamma^{\epsilon}(t_{0},x_{0},y_{0})$ achieves this best decay rate, and is said to be
\textit{asymptotically optimal}.

It is important to note here that asymptotic optimality is not the only practical concern. Rare events associated with multiscale problems are rather complicated
and many times is it  very difficult to construct asymptotically optimal schemes. One way to circumvent this difficulty is by constructing appropriate sub-optimal schemes
with precise bounds on asymptotic performance.
 This is the content of Theorems \ref{T:UniformlyLogEfficientReg1}, \ref{T:UniformlyLogEfficientReg2} and \ref{T:UniformlyLogEfficientReg3}
for Regime $i=1,2,3$ respectively.

Fix the Regime $i=1,2,3$ and assume that we are given a control $\bar{u}(s,x,y;i)$ that is sufficiently smooth and bounded. Let us recall the $2\kappa-$dimensional Wiener process $Z(\cdot)=\left(W(\cdot), B(\cdot)\right)$. Consider the family of probability measures $\bar{\mathrm{P}}^{\epsilon}%
$\ defined by the change of measure
\[
\frac{d\bar{\mathrm{P}}^{\epsilon}}{d\mathrm{P}}=\exp\left\{  -\frac
{1}{2\epsilon}\int_{t_{0}}^{T}\left\Vert \bar{u}(s,X^{\epsilon}(s),Y^{\epsilon}(s);i)\right\Vert
^{2}ds+\frac{1}{\sqrt{\epsilon}}\int_{t_{0}}^{T}\left\langle \bar{u}%
(s,X^{\epsilon}(s),Y^{\epsilon}(s);i),dZ(s)\right\rangle \right\}  .
\]
By Girsanov's Theorem
\[
\bar{Z}(s)=Z(s)-\frac{1}{\sqrt{\epsilon}}\int_{t_{0}}^{s}\bar{u}(\rho,X^{\epsilon
}(\rho),Y^{\epsilon}(\rho);i)d\rho,~~~t_{0}\leq s\leq T
\]
is a Wiener process on $[t_{0},T]$ under the probability measure $\bar
{\mathrm{P}}^{\epsilon}$, and $(X^{\epsilon},Y^{\epsilon})$
satisfies $X^{\epsilon}(t_{0})=x_{0}$, $Y^{\epsilon}(t_{0})=y_{0}$ and for
$s\in(t,T]$ it is the unique strong solution of (\ref{Eq:Main2})
with $\bar{Z}(\cdot)=\left(\bar{W}(\cdot), \bar{B}(\cdot)\right)$ in
place of $Z(\cdot)=\left(W(\cdot), B(\cdot)\right)$ and
$\bar{u}(s,x,y;i)=(\bar{u}_{1}(s,x,y;i), \bar{u}_{2}(s,x,y;i))$ in
place of $u(s)=(u_{1}(s),u_{2}(s))$.

Letting
\[
\Gamma^{\epsilon}(t_{0},x_{0},y_{0})=\exp\left\{  -\frac{1}{\epsilon}h(X^{\epsilon
}(T))\right\}  \frac{d\mathrm{P}}{d\bar{\mathrm{P}}^{\epsilon}}(X^{\epsilon
}, Y^{\epsilon}),
\]
it follows easily that under $\bar{\mathrm{P}}^{\epsilon}$, $\Gamma^{\epsilon
}(t_{0},x_{0},y_{0})$ is an unbiased estimator for $\theta(\epsilon)$. The performance of
this estimator is characterized by the decay rate of its second moment
\begin{equation}
Q^{\epsilon}(t_{0},x_{0},y_{0};\bar{u})\doteq\bar{\mathrm{E}}^{\epsilon}\left[  \exp\left\{
-\frac{2}{\epsilon}h(X^{\epsilon}(T))\right\}  \left(  \frac{d\mathrm{P}%
}{d\bar{\mathrm{P}}^{\epsilon}}(X^{\epsilon}, Y^{\epsilon})\right)  ^{2}\right].
\label{Eq:2ndMoment1}%
\end{equation}

We construct asymptotically efficient importance sampling schemes by
choosing  the control $\bar{u}$ in (\ref{Eq:2ndMoment1}) such that
the behavior of the second moment $Q^{\epsilon}(t_{0},x_{0},y_{0};\bar{u})$ is
controlled. Two are the main ingredients in the construction of
$\bar{u}$:
\begin{enumerate}
\item{The gradient of a subsolution to the PDE that the function $G_{i}(t,x)$
defined in (\ref{Eq:VarProb}) satisfies. Under appropriate regularity
conditions $G_{i}(t,x)$ satisfies a PDE of Hamilton-Jacobi-Bellman (HJB) type.}
\item{The solution to the associated cell problem or in
other words the so-called corrector from the homogenization
theory of HJB equations.}
\end{enumerate}

Depending on the regime of interaction the HJB equation and
the corresponding cell problem take a different form. These will be
made precise in Subsections \ref{SS:IS_Regime1}-\ref{SS:IS_Regime3}.

As mentioned before, we work with appropriate subsolutions to the associated HJB equation.
Thus, let us now recall the notion of a subsolution to an HJB equation of the form
\begin{equation}
G_{s}(s,x)+\bar{H}(x,\nabla_{x}G(s,x))=0,\quad G(T,x)=h(x).
\label{Eq:HJBequation2}%
\end{equation}

\begin{definition}
\label{Def:ClassicalSubsolution} A function
$\bar{U}(s,x):[0,T]\times \mathbb{R}^{m}\mapsto\mathbb{R}$ is a
classical subsolution to the HJB equation (\ref{Eq:HJBequation2}) if

\begin{enumerate}
\item $\bar{U}$ is continuously differentiable,

\item $\bar{U}_{s}(s,x)+\bar{H}(x,\nabla_{x}\bar{U}(s,x))\geq0$ for every
$(s,x)\in(0,T)\times\mathbb{R}^{m}$,

\item $\bar{U}(T,x)\leq h(x)$ for $x\in\mathbb{R}^{m}$.
\end{enumerate}
\end{definition}

We will impose stronger regularity conditions on the subsolutions to
be considered than those of Definition
\ref{Def:ClassicalSubsolution}. This is convenient for the purposes
of illustrations since then the feedback control is uniformly
bounded and thus several technical problems are avoided. However, we
mention that the uniform bounds that will be assumed in Condition
\ref{Cond:ExtraReg} can be replaced by milder conditions with the
expense of working harder to establish the results.

\begin{condition}
\label{Cond:ExtraReg} $\bar{U}$ has continuous derivatives up to
order $1$ in $t$ and order $2$ in $x$, and the first and second
derivatives in $x$ are uniformly bounded.
\end{condition}
Roughly speaking, our main result is as follows.

\begin{theorem}
\label{T:UniformlyLogEfficient} Consider a bounded and continuous
function $h:\mathbb{R}^{m}\mapsto\mathbb{R}$ and assume Conditions
\ref{A:Assumption1} and under Regime $1$ assume Condition \ref{A:Assumption2}. Let $\{\left(X^{\epsilon}(s), Y^{\epsilon}(s) \right),\epsilon>0\}$ be
the solution to (\ref{Eq:Main}) for $s\in[t_{0},T]$ with initial point $(x_{0},y_{0})$ at time $t_{0}$.  Under Regime $i=1,2,3$ let $\bar{u}(s,x,y;i)$  be an appropriately defined and smooth control in terms of a subsolution $\bar{U}_{i}(s,x)$ to the HJB satisfied by $G_{i}(s,x)$ and the corrector from the corresponding cell problem. Then
\begin{equation}
\liminf_{\epsilon\rightarrow0}-\epsilon\ln Q^{\epsilon}(t_{0},x_{0},y_{0};\bar{u}(\cdot;i))\geq
G_{i}(t_{0},x_{0})+\bar{U}_{i}(t_{0},x_{0}). \label{Eq:GoalSubsolution}%
\end{equation}
\end{theorem}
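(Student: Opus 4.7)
The plan is to combine a Boué--Dupuis variational representation of $-\epsilon\ln Q^\epsilon$ with Itô's formula applied to a perturbed subsolution that uses the regime-appropriate corrector to tame the singular $1/\delta$ and $\epsilon/\delta$ drifts. First, I would convert $Q^\epsilon(t_0,x_0,y_0;\bar u)$ to an expectation under $\mathbb{P}$ by computing $d\mathbb{P}/d\bar{\mathbb{P}}^\epsilon$, and then apply (\ref{Eq:VariationalRepresentation}) with a fresh auxiliary control $v=(v_1,v_2)$. Since the $dZ$ stochastic integrals contribute zero mean after the shift, an algebraic rearrangement gives
\[
-\epsilon\ln Q^\epsilon(t_0,x_0,y_0;\bar u) \;=\; \inf_{v\in\mathcal{A}} \mathbb{E}\!\left[\int_{t_0}^T\!\!\Bigl(\tfrac{1}{2}\|v\|^2 - \tfrac{1}{2}\|\bar u\|^2 + \bar u\cdot v\Bigr) ds + 2h(\bar X^\epsilon(T))\right],
\]
where $(\bar X^\epsilon,\bar Y^\epsilon)$ solves (\ref{Eq:Main2}) driven by $v$ and $\bar u$ is evaluated at $(s,\bar X^\epsilon(s),\bar Y^\epsilon(s))$. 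I would then split the running cost as
\[
\tfrac{1}{2}\|v\|^2 - \tfrac{1}{2}\|\bar u\|^2 + \bar u\cdot v + 2h(\bar X^\epsilon(T)) = \bigl[\tfrac{1}{2}\|v\|^2 + h(\bar X^\epsilon(T))\bigr] + \bigl[h(\bar X^\epsilon(T)) - \tfrac{1}{2}\|\bar u\|^2 + \bar u\cdot v\bigr],
\]
reducing the task to lower bounds on each bracket.

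Second, the first bracket is controlled by the Laplace lower bound of Theorem \ref{T:MainTheorem2}(ii) applied to the Boué--Dupuis representation of $-\epsilon\ln\theta(\epsilon)$, which forces $\inf_v\mathbb{E}[\int\tfrac{1}{2}\|v\|^2 ds + h(\bar X^\epsilon(T))]\ge G_i(t_0,x_0)-o(1)$, hence the same bound holds for every $v$. For the second bracket I would apply Itô's formula to a perturbed test function $\Psi^\epsilon(s,x,y)=\bar U_i(s,x)+\rho_i(\epsilon)\Xi_i(s,x,y)$, where $\Xi_i$ is the corrector arising from the cell problem for the fast generator $\mathcal{L}^i$ of Definition \ref{Def:ThreePossibleOperators} evaluated with $p=\nabla_x\bar U_i(s,x)$. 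In Regime 1 the identity $\mathcal{L}^1\Xi_1 = -b\cdot\nabla_x\bar U_i$, built from the $\chi$ of (\ref{Eq:CellProblem}), annihilates the singular $\epsilon/\delta$ drift; in Regimes 2 and 3 the corrector solves the corresponding (nonlinear) cell problem whose solvability and uniform regularity follow from Theorem \ref{T:ExplicitSolutionRegime1} together with Condition \ref{Cond:ExtraReg}. The scale $\rho_i(\epsilon)\downarrow 0$ (of order $\delta$ in Regime 1 and suitably smaller in Regimes 2 and 3) makes $\Psi^\epsilon-\bar U_i$ vanish uniformly while the singular drift contributions cancel. Taking expectations and using $\bar U_i(T,\cdot)\le h$ yields
\[
\bar U_i(t_0,x_0) \;\le\; \mathbb{E}\!\left[h(\bar X^\epsilon(T)) - \!\int_{t_0}^T\!\bigl(\partial_s\bar U_i + \nabla_x\bar U_i\cdot\lambda_i(\bar X^\epsilon,\bar Y^\epsilon,v_1,v_2)\bigr)ds\right] + o(1).
\]

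Third, I would close the argument using the Legendre--Fenchel structure $\bar H_i(x,p)=\sup_\beta\{p\cdot\beta-L_i(x,\beta)\}$, with $L_i$ the local rate function of Lemma \ref{T:MainTheorem21}. The control $\bar u$ is chosen exactly to attain the minimum in $L_i(x,\beta)$ at the dual argument corresponding to $p=\nabla_x\bar U_i(s,x)$, as provided by Theorem \ref{T:ExplicitSolutionRegime1}. Combined with the subsolution inequality $\partial_s\bar U_i+\bar H_i(x,\nabla_x\bar U_i)\ge 0$ and the averaging of the fast variable against the occupation measure $\mathrm{P}^{\epsilon,\Delta}$ supplied by Theorem \ref{T:MainTheorem1}, a direct manipulation shows
\[
\mathbb{E}\!\left[\int_{t_0}^T\!\bigl(\partial_s\bar U_i + \nabla_x\bar U_i\cdot\lambda_i(\bar X^\epsilon,\bar Y^\epsilon,v_1,v_2) - \tfrac{1}{2}\|\bar u\|^2 + \bar u\cdot v\bigr)ds\right] \;\ge\; -o(1).
\]
Substituting into the Itô inequality gives $\mathbb{E}[h(\bar X^\epsilon(T)) - \int\tfrac{1}{2}\|\bar u\|^2 ds + \int\bar u\cdot v\,ds]\ge \bar U_i(t_0,x_0)-o(1)$, and summing the two bracket bounds and taking the infimum over $v$ yields (\ref{Eq:GoalSubsolution}).

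The principal obstacle is the construction and uniform control of the corrector $\Xi_i$ and the accompanying perturbed-test-function bookkeeping, especially in Regimes 2 and 3 where the cell problem is nonlinear and its smoothness and uniform bounds must be extracted from Theorem \ref{T:ExplicitSolutionRegime1} under Condition \ref{Cond:ExtraReg}. A closely related delicacy is verifying that the specific choice of $\bar u$ turns the Legendre inequality into an asymptotic equality when paired with the subsolution, which is precisely what forces the residual error to collapse to $o(1)$; this must be worked out regime by regime, since both the form of $\lambda_i$ and the structure of $\mathcal{L}^i$ change. Finally, uniformity in $v$ in the variational infimum requires that along any near-minimizing sequence one has $\sup_\epsilon\mathbb{E}\!\int(\|v_1^\epsilon\|^2+\|v_2^\epsilon\|^2)\,ds<\infty$ via a standard truncation, so that Theorem \ref{T:MainTheorem1} delivers the averaging needed to pass to the limit in the Itô identity, after which the lower-semicontinuity of $S^i$ from Theorem \ref{T:MainTheorem2}(i) closes the argument.
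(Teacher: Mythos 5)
Your overall strategy is sound and, for Regimes $2$ and $3$, it is essentially the paper's own argument in a slightly reorganized form: the representation you write down (control $v$ driving (\ref{Eq:Main2}), running cost $\tfrac12\|v\|^2-\tfrac12\|\bar u\|^2+\bar u\cdot v$) is exactly the paper's representation after the change of variables $\bar v=v-\bar u$, your ``second bracket'' bound is precisely the paper's inequality (\ref{Eq:UniformlyLogEfficient2Regime2_5}) obtained by It\^{o}'s formula applied to $\bar U$ and to $\delta\,\xi_{\gamma}(x,y,\nabla_x\bar U)$ together with the cell problem (\ref{Eq:CellProblemRegime2_1}) and the subsolution property, and your ``first bracket'' bound is the paper's final step (representation formula plus the Laplace lower bound of Theorem \ref{T:MainTheorem2}(ii)). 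Where you genuinely diverge is Regime $1$: the paper does not run a perturbed-test-function argument there; it instead applies the controlled large deviations lower bound to the modified system (with $\bar c,\bar g$), averages the extra $\|\bar u\|^2$ cost against $\mu(dy|x)$, and then finishes with a purely deterministic calculus-of-variations manipulation using the explicit rate function of Theorem \ref{T:MainTheorem3}. Your unified corrector route also works in Regime $1$ (indeed the $v$-dependence cancels exactly because $\nabla_x\bar U\cdot\lambda_1$ contains $-\bar u\cdot v$), but, unlike Regimes $2$--$3$ where the cell identity holds pointwise in $y$ and no averaging is needed, in Regime $1$ the subsolution inequality only controls the $y$-averaged integrand, so you must invoke the occupation-measure convergence of Theorem \ref{T:MainTheorem1} along cost-bounded near-minimizing controls, as you note; the paper's route buys an explicit and shorter argument there, yours buys uniformity of treatment across regimes. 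Two details of your sketch should be corrected: the corrector must enter at scale exactly of order $\delta$ in Regimes $2$ and $3$ as well (a ``suitably smaller'' scale would kill the $O(1)$ terms $\langle\nabla_y\xi_{\gamma},\tau_1 v_1+\tau_2 v_2\rangle$ and $\mathcal{L}^{2}_{0,x}\xi_{\gamma}$ that generate $\tfrac12\|\bar u\|^2$ and $-\bar u\cdot v$ and reconcile $H_{\gamma}$ with $\bar H_{\gamma}$, so your displayed It\^{o} inequality with only $\nabla_x\bar U_i\cdot\lambda_i$ is literally valid only for $i=1$); and the existence/regularity of the Regimes $2$--$3$ corrector comes from the cell problem (\ref{Eq:CellProblemRegime2_1}) (via \cite{ArisawaLions,AlvarezBardi2001}) together with Condition \ref{Cond:ExtraRegReg2}, not from Theorem \ref{T:ExplicitSolutionRegime1}, which only furnishes the optimizer of the local rate function (explicitly so in Regime $1$).
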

Once we have established a Theorem like \ref{T:UniformlyLogEfficient}, we can make a claim for estimating probabilities of the form $\mathrm{P}_{t,x_{0},y_{0}}[X^{\epsilon}(T)\in A]$ as well. The claim of the following proposition  is not readily covered by Theorem \ref{T:UniformlyLogEfficient}, since the function $h$ is
neither bounded nor continuous. However, by an approximating argument analogous to \cite{DupuisWang2} the claim can be established. We omit the details of the proof and only present the statement.

\begin{proposition}
\label{P:UniformlyLogEfficient1} Assume Conditions
\ref{A:Assumption1} and \ref{Cond:ExtraReg} and under Regime $1$ assume Condition \ref{A:Assumption2}. Let $\{\left(X^{\epsilon}, Y^{\epsilon} \right),\epsilon>0\}$ be
the solution to (\ref{Eq:Main}) with initial point $(t_{0},x_{0},y_{0})$. Under Regime $i$,  let $A\subset\mathbb{R}^{m}$ be a regular set with respect to the action functional $S^{i}$ and the initial point $(t_{0},x_{0},y_{0})$, i.e., the infimum of $S^{i}$ over the closure $\bar{A}$ is the same as
the infimum over the interior $A^{o}$. 
Let
\[
h(x)=%
\begin{cases}
0 & \text{if }x\in A\\
+\infty & \text{if }x\notin A.
\end{cases}
\]
Let $\bar{u}(s,x,y;i)$  be an appropriately defined and smooth control as in Theorem \ref{T:UniformlyLogEfficient}. Then (\ref{Eq:GoalSubsolution}) holds.
\end{proposition}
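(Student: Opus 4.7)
The plan is to reduce to Theorem~\ref{T:UniformlyLogEfficient} by approximating the discontinuous $h$ from below by bounded continuous functions. Since $h=0$ on $A$ and $h=+\infty$ on $A^{c}$, we have $e^{-h/\epsilon}=1_{A}$, so $\theta(\epsilon)=\mathrm{P}_{t_{0},x_{0},y_{0}}[X^{\epsilon}(T)\in A]$. Introduce $h_{n}(x)\doteq n\,d(x,A)\wedge n$, which is bounded, continuous, vanishes on $A$, satisfies $h_{n}\leq h$, and increases pointwise to $h$. The key pointwise inequality is $1_{A}(x)\leq e^{-2h_{n}(x)/\epsilon}$ for every $n,\epsilon,x$ (equality on $A$, and trivially on $A^{c}$), which is what allows the comparison of second moments below.

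For each fixed $n$, Theorem~\ref{T:UniformlyLogEfficient} applies to the bounded continuous $h_{n}$. Let $\bar{U}_{i}^{(n)}$ be a smooth subsolution of the HJB equation satisfied by $G_{i,n}(t,x)\doteq\inf_{\phi(t)=x}[S_{tT}^{i}(\phi)+h_{n}(\phi(T))]$, and let $\bar{u}^{(n)}(\cdot;i)$ be the associated feedback control built from $\bar{U}_{i}^{(n)}$ and the cell-problem corrector. Then
\begin{equation*}
\liminf_{\epsilon\downarrow 0}\bigl[-\epsilon\ln Q^{\epsilon,(n)}(t_{0},x_{0},y_{0};\bar{u}^{(n)}(\cdot;i))\bigr]\;\geq\;G_{i,n}(t_{0},x_{0})+\bar{U}_{i}^{(n)}(t_{0},x_{0}),
\end{equation*}
where $Q^{\epsilon,(n)}$ is the second moment of the IS estimator for $\mathrm{E}[e^{-h_{n}(X^{\epsilon}(T))/\epsilon}]$. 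Using the same change of measure for the estimator $\Gamma^{\epsilon}=1_{A}(X^{\epsilon}(T))\,d\mathrm{P}/d\bar{\mathrm{P}}^{\epsilon,(n)}$ of $\theta(\epsilon)$ together with $1_{A}\leq e^{-2h_{n}/\epsilon}$ gives $Q^{\epsilon}(t_{0},x_{0},y_{0};\bar{u}^{(n)}(\cdot;i))\leq Q^{\epsilon,(n)}(t_{0},x_{0},y_{0};\bar{u}^{(n)}(\cdot;i))$, so the same $\liminf$ lower bound carries over to $Q^{\epsilon}$.

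The final step is to pass $n\to\infty$. Regularity of $A$ yields $G_{i,n}(t_{0},x_{0})\to G_{i}(t_{0},x_{0})$: the $\limsup$ bound is immediate from any test trajectory terminating in $A^{\circ}$ (where $h_{n}=0$), while monotone convergence $h_{n}\uparrow h$ combined with the LDP of Theorem~\ref{T:MainTheorem2} gives $\lim_{n}G_{i,n}(t_{0},x_{0})=\inf_{\phi(T)\in\bar{A}}S_{t_{0}T}^{i}(\phi)$, which under the regularity assumption equals $\inf_{\phi(T)\in A^{\circ}}S_{t_{0}T}^{i}(\phi)=G_{i}(t_{0},x_{0})$. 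Choosing the $\bar{U}_{i}^{(n)}$ as smooth approximations of a single subsolution $\bar{U}_{i}$ to the HJB for the discontinuous $h$ (whose terminal condition reduces to $\bar{U}_{i}(T,x)\leq 0$ on $A$), one arranges $\bar{U}_{i}^{(n)}(t_{0},x_{0})\to\bar{U}_{i}(t_{0},x_{0})$ and $\bar{u}^{(n)}(\cdot;i)\to\bar{u}(\cdot;i)$, from which (\ref{Eq:GoalSubsolution}) follows.

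The hardest part is this last step: synchronizing the $\epsilon\downarrow 0$ and $n\to\infty$ limits while preserving the $\liminf$ bound on $Q^{\epsilon}$. This requires the uniform-in-$n$ regularity afforded by Condition~\ref{Cond:ExtraReg} for the approximating subsolutions and their derivatives, and critically uses the regularity of $A$ to prevent minimizing trajectories of the approximating problems from concentrating on $\partial A$. The remaining technical details follow the approximation scheme of \cite{DupuisWang2} referenced in the proposition's statement.
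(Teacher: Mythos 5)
The paper itself gives no detailed argument here (it only points to an approximation argument analogous to \cite{DupuisWang2}), so I can only judge your attempt on its own terms; your overall strategy — replace the indicator-type terminal cost by bounded continuous $h_{n}(x)=n\,d(x,A)\wedge n$, compare second moments via $1_{A}\leq e^{-2h_{n}/\epsilon}$, and use regularity of $A$ to identify the limit of the approximating variational problems — is exactly the intended spirit. However, as written there is a genuine gap in the final step. The proposition asserts the bound (\ref{Eq:GoalSubsolution}) for the estimator driven by the \emph{fixed} control $\bar{u}(\cdot;i)$ built from a subsolution $\bar{U}_{i}$ for the discontinuous $h$, whereas your argument establishes, for each $n$, a bound for the estimator driven by a different control $\bar{u}^{(n)}$ built from a subsolution $\bar{U}_{i}^{(n)}$ of the $h_{n}$-problem. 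Passing from the family of bounds $\liminf_{\epsilon}-\epsilon\ln Q^{\epsilon}(\cdot;\bar{u}^{(n)})\geq G_{i,n}+\bar{U}_{i}^{(n)}(t_{0},x_{0})$ to the desired bound for $\bar{u}$ via ``$\bar{u}^{(n)}\to\bar{u}$'' is precisely an interchange of the limits $\epsilon\downarrow 0$ and $n\to\infty$, and nothing in your proposal supplies the uniformity (in $\epsilon$ or in $n$) needed to justify it; you acknowledge this is the hardest part but leave it unresolved. Note also that a subsolution for $h$ is not automatically a subsolution for $h_{n}$ (the terminal inequality is required off $A$ as well, and $h_{n}\leq h$ goes the wrong way), so the existence of suitable $\bar{U}_{i}^{(n)}$ with the claimed convergence of controls is itself unproved.

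The clean fix keeps the change of measure fixed throughout. By Condition \ref{Cond:ExtraReg} the map $x\mapsto\bar{U}_{i}(T,x)$ is Lipschitz, say with constant $L$, and the terminal condition for the discontinuous $h$ reads $\bar{U}_{i}(T,\cdot)\leq 0$ on $A$; hence $\bar{U}_{i}(T,x)\leq L\,d(x,A)$ near $A$, so for every $n\geq L$ (after a routine truncation or subtraction of a small constant if $\bar{U}_{i}(T,\cdot)$ is unbounded above far from $A$) the same $\bar{U}_{i}$ is a classical subsolution in the sense of Definition \ref{Def:ClassicalSubsolution} for the HJB with terminal datum $h_{n}$, the Hamiltonian being unchanged. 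The proof of Theorem \ref{T:UniformlyLogEfficient} then applies verbatim with the same control $\bar{u}$, giving
\begin{equation*}
\liminf_{\epsilon\downarrow 0}\bigl[-\epsilon\ln Q^{\epsilon,(n)}(t_{0},x_{0},y_{0};\bar{u}(\cdot;i))\bigr]\;\geq\;G_{i,n}(t_{0},x_{0})+\bar{U}_{i}(t_{0},x_{0})\quad\text{for all large }n,
\end{equation*}
and your monotonicity inequality $1_{A}\leq e^{-2h_{n}/\epsilon}$ (now with the same measure change on both sides) transfers this to $Q^{\epsilon}(\cdot;\bar{u})$. Finally $G_{i,n}(t_{0},x_{0})\uparrow G_{i}(t_{0},x_{0})$ follows not from the LDP itself but from compactness of the level sets of $S^{i}$ (part (i) of Theorem \ref{T:MainTheorem2}), lower semicontinuity along nearly optimal trajectories whose endpoints satisfy $d(\phi_{n}(T),A)\to 0$, and the regularity assumption $\inf_{\bar{A}}=\inf_{A^{o}}$; taking $n\to\infty$ after $\epsilon\downarrow 0$ — a harmless order since the left-hand side no longer depends on $n$ — yields (\ref{Eq:GoalSubsolution}). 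A minor point in the same direction: $h_{n}$ vanishes on all of $\bar{A}$, so it does not increase to $h$ on $\partial A\setminus A$; this is exactly what the regularity of $A$ is there to absorb, and it is why the limit of $G_{i,n}$ is first identified as the infimum over $\bar{A}$ before regularity converts it to $G_{i}$.
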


Notice that the lower asymptotic bound of Theorem \ref{T:UniformlyLogEfficient} and Proposition \ref{P:UniformlyLogEfficient1} is independent of the initial point $y_{0}$ of the fast component $Y^{\epsilon}$. This is due to averaging.

\begin{remark}
Since $\bar{U}_{i}$ is a subsolution, we get that $\bar{U}_{i}(s,x)\leq
G_{i}(s,x)$ everywhere. By (\ref{Eq:GoalSubsolution}) this implies that  the scheme is asymptotically optimal if $\bar
{U}_{i}(t_{0},x_{0})=G_{i}(t_{0},x_{0})$ at the starting point $(t_{0},x_{0})$. Standard Monte Carlo corresponds to choosing the subsolution $\bar{U}_{i}=0$. Hence, any subsolution with value at the origin $(t_{0},x_{0})$ such that
$$0\ll\bar{U}_{i}(t_{0},x_{0})\leq G_{i}(t_{0},x_{0}) $$ will have better asymptotic performance than that of standard Monte Carlo.
\end{remark}

In the next subsections we present how one can choose the controls $\bar{u}(s,x,y;i)$ in terms of a subsolution $\bar{U}$ and its corresponding cell problem such that the bound mentioned in Theorem \ref{T:UniformlyLogEfficient} is attained. The situation is subtle here due to the multiscale aspect of the problem.

\subsection{Importance sampling for Regime 1.}\label{SS:IS_Regime1}

In this subsection we construct asymptotically efficient importance sampling schemes for Regime $1$. In Regime 1, the form of the Hamiltonian $\bar{H}(x,p)$ in
(\ref{Eq:HJBequation2}) is naturally suggested by the calculus of
variation problem
(\ref{Eq:VarProb}) and the explicit formula of the rate function $S^{1}_{tT}%
(\phi)$ in Theorem \ref{T:MainTheorem3}:
\begin{equation}
\bar{H}(x,p)=\left\langle r(x),p\right\rangle -\frac{1}{2}\langle p,q(x)p\rangle.
\label{Eq:ControlFormHJB}%
\end{equation}
In fact, under mild conditions $G_{1}$ from (\ref{Eq:VarProb}) is the unique viscosity
solution to (\ref{Eq:HJBequation2}) with $\bar{H}(x,p)$ defined by
(\ref{Eq:ControlFormHJB}).

We have the following Theorem.

\begin{theorem}
\label{T:UniformlyLogEfficientReg1} Let $\{\left(X^{\epsilon}(s), Y^{\epsilon}(s) \right),\epsilon>0\}$ be
the solution to (\ref{Eq:Main}) for $s\in[t_{0},T]$ with initial point $(x_{0},y_{0})$ at time $t_{0}$. Consider a bounded and continuous
function $h:\mathbb{R}^{m}\mapsto\mathbb{R}$ and assume Conditions
\ref{A:Assumption1}, \ref{A:Assumption2} and \ref{Cond:ExtraReg}. Let $\bar{U}_{1}(s,x)$ be a subsolution to the associated HJB equation. Define the feedback control $\bar{u}(s,x,y;1)=\left(\bar{u}_{1}(s,x,y;1), \bar{u}_{2}(s,x,y;1)\right)$ by

 \begin{equation*}
\bar{u}(s,x,y;1)=\left(-\left(\sigma+\frac{\partial\chi}{\partial y}\tau_{1}\right)^{T}(x,y)\nabla_{x}\bar{U}_{1}(s,x), -\left(\frac{\partial\chi}{\partial y}\tau_{2}\right)^{T}(x,y)\nabla_{x}\bar{U}_{1}(s,x)\right)\label{Eq:feedback_controlReg1}
\end{equation*}

 Then the conclusion of Theorem \ref{T:UniformlyLogEfficient} holds, i.e.
\begin{equation*}
\liminf_{\epsilon\rightarrow0}-\epsilon\ln Q^{\epsilon}(t_{0},x_{0},y_{0};\bar{u}(\cdot;1))\geq
G_{1}(t_{0},x_{0})+\bar{U}_{1}(t_{0},x_{0}). \label{Eq:GoalRegime1Subsolution}%
\end{equation*}
\end{theorem}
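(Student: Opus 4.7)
The plan is to apply the Bou\'e--Dupuis variational representation to the second moment $Q^\epsilon$, pass to the $\epsilon\downarrow 0$ limit using the occupation-measure machinery of Theorem~\ref{T:MainTheorem1}, and finally exploit the precise form of $\bar u(\cdot;1)$ (built from the subsolution $\bar U_1$ and the corrector $\chi$) to produce the extra $\bar U_1(t_0,x_0)$ gain. After the Girsanov rewriting
\begin{equation*}
Q^\epsilon = \mathbb E\!\left[\exp\!\left\{-\tfrac{2h(X^\epsilon(T))}{\epsilon}+\tfrac{1}{2\epsilon}\!\!\int\!\!\|\bar u\|^2 ds - \tfrac{1}{\sqrt\epsilon}\!\!\int\!\!\langle\bar u,dZ\rangle\right\}\right],
\end{equation*}
with $\bar u = \bar u(s,X^\epsilon,Y^\epsilon;1)$, the Bou\'e--Dupuis formula combined with the shift $Z\mapsto Z+\tfrac{1}{\sqrt\epsilon}\int v\,ds$ yields (the $\sqrt\epsilon$ and $1/\sqrt\epsilon$ cancel in the shifted stochastic integral, whose remaining martingale part has mean zero)
\begin{equation*}
-\epsilon\log Q^\epsilon = \inf_v\mathbb E\!\left[\tfrac12\!\!\int\!\!\|v\|^2 ds + 2h(\bar X^\epsilon_v(T)) - \tfrac12\!\!\int\!\!\|\bar u_v\|^2 ds + \!\!\int\!\!\langle\bar u_v,v\rangle ds\right],
\end{equation*}
where $\bar u_v = \bar u(s,\bar X^\epsilon_v,\bar Y^\epsilon_v;1)$ and $(\bar X^\epsilon_v,\bar Y^\epsilon_v)$ solves (\ref{Eq:Main2}) with control $v$. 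The boundedness of $\bar u$, guaranteed by Condition~\ref{Cond:ExtraReg} and the $C^2$-bounds on $\chi$, together with Young's inequality force any $\eta$-near-optimal sequence $v^\epsilon$ to satisfy $\sup_\epsilon\mathbb E\int\|v^\epsilon\|^2 ds<\infty$, matching the hypothesis of Theorem~\ref{T:MainTheorem1}.

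Introducing the occupation measure $\mathrm P^{\epsilon,\Delta}$ associated with $(v^\epsilon,\bar Y^\epsilon_{v^\epsilon})$, Theorem~\ref{T:MainTheorem1} delivers, along a subsubsequence, a weak limit $(\bar X^\epsilon_{v^\epsilon},\mathrm P^{\epsilon,\Delta})\Rightarrow(\bar X,\mathrm P)$ with $(\bar X,\mathrm P)\in\mathcal V_{(\lambda_1,\mathcal L^1)}$ almost surely. Since $\mathcal L^1$ does not depend on $z$, the constraint (\ref{Eq:AccumulationPointsMeasureViable}) forces the $y$-marginal of $\mathrm P_s$ to be the invariant measure $\mu(\cdot\mid\bar X(s))$ of $\mathcal L^1_{\bar X(s)}$. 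Using a Skorokhod realisation, Fatou's lemma for the quadratic cost, continuity of $h$, and the local-averaging estimate of Lemma~3.2 in \cite{DupuisSpiliopoulos} (to recast the cross term as an integral against $\mathrm P^{\epsilon,\Delta}$ and take its weak limit), I obtain
\begin{equation*}
\liminf_{\epsilon\downarrow 0}-\epsilon\log Q^\epsilon \ge \mathbb E\!\left[\tfrac12\!\!\int\!\!\|z\|^2 d\mathrm P + 2h(\bar X(T)) - \tfrac12\!\!\int\!\!\|\bar u(s,\bar X,y;1)\|^2 d\mathrm P + \!\!\int\!\!\langle\bar u(s,\bar X,y;1),z\rangle d\mathrm P\right].
\end{equation*}

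Writing $\Sigma_1=\sigma+\tfrac{\partial\chi}{\partial y}\tau_1$ and $\Sigma_2=\tfrac{\partial\chi}{\partial y}\tau_2$, the construction of $\bar u$ and the formula for $\lambda_1$ read $\bar u=(-\Sigma_1^T\nabla_x\bar U_1,-\Sigma_2^T\nabla_x\bar U_1)$ and $\lambda_1 = c+\tfrac{\partial\chi}{\partial y}g+\Sigma_1 z_1+\Sigma_2 z_2$. Combining the $y$-marginal identity with the expressions in Theorem~\ref{T:ExplicitSolutionRegime1} for $r$ and $q$,
\begin{equation*}
\int\tfrac12\|\bar u\|^2 d\mathrm P_s = \tfrac12\nabla\bar U_1^T q(\bar X)\nabla\bar U_1, \qquad \int(\Sigma_1 z_1+\Sigma_2 z_2)\,d\mathrm P_s = \dot{\bar X}-r(\bar X),
\end{equation*}
so that $\int\langle\bar u,z\rangle d\mathrm P_s = -\nabla\bar U_1^T(\dot{\bar X}-r(\bar X))$. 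Applying the chain rule to $\bar U_1(s,\bar X(s))$ (legitimate by Condition~\ref{Cond:ExtraReg} and absolute continuity of $\bar X$) and rearranging yields
\begin{equation*}
\!\!\int\!\langle\bar u,z\rangle d\mathrm P - \tfrac12\!\!\int\!\|\bar u\|^2 d\mathrm P = \bar U_1(t_0,x_0) - \bar U_1(T,\bar X(T)) + \!\!\int_{t_0}^T\!\![\partial_s\bar U_1+\bar H(\bar X,\nabla\bar U_1)]\,ds,
\end{equation*}
with $\bar H(x,p)=\langle r(x),p\rangle-\tfrac12\langle p,q(x)p\rangle$. The HJB subsolution inequality bounds the last integral below by $0$, and $\bar U_1(T,\cdot)\le h$ gives $-\bar U_1(T,\bar X(T))\ge -h(\bar X(T))$. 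Substituting into the liminf bound above, the right-hand side is at least $\mathbb E[\tfrac12\int\|z\|^2 d\mathrm P + h(\bar X(T))] + \bar U_1(t_0,x_0)$; taking the infimum over viable pairs and invoking the representation of $G_1$ from Lemma~\ref{T:MainTheorem21} closes the argument.

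The main technical obstacle is handling the cross term $\int\langle\bar u_v,v\rangle\,ds$ in the weak limit, since it is not a function of $(\bar X^\epsilon_v,\bar Y^\epsilon_v)$ alone; recasting it as an integral against $\mathrm P^{\epsilon,\Delta}$ uses the smoothness and boundedness of $\bar u$ in $(x,y)$, which rests on Condition~\ref{Cond:ExtraReg} and the $C^2$-estimates on $\chi$ (Chapter~3, Section~6 of \cite{BLP}). Once this identification is in place, the final subsolution/corrector algebra is essentially forced, because the feedback $\bar u$ was built from precisely the diffusion coefficients $(\Sigma_1,\Sigma_2)$ that appear in $\lambda_1$, so that the corrector $\chi$ is automatically incorporated into the control.
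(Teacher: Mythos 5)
Your proposal is correct, and its overall strategy coincides with the paper's: a variational representation for the second moment $Q^{\epsilon}$, passage to the Regime~$1$ averaging limit using the machinery of Section~\ref{S:LDP}, and a final verification step in which the subsolution property of $\bar{U}_{1}$ is integrated along the limiting trajectory. The execution differs in two (equivalent) bookkeeping choices. First, you keep the Girsanov factor as an explicit cross term $\int\langle \bar{u}_{v},v\rangle ds$ with the original controlled dynamics (\ref{Eq:Main2}), whereas the paper absorbs $\bar{u}$ into modified drifts $\bar{c}=c-\sigma\bar{u}_{1}$, $\bar{g}=g-\tau_{1}\bar{u}_{1}-\tau_{2}\bar{u}_{2}$ in (\ref{Eq:Main3}); the two representations are related by the shift $v\mapsto v+\bar{u}$, so they carry the same information. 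Second, you pass to the limit at the level of viable pairs (Theorem~\ref{T:MainTheorem1}), identify the $y$-marginal of $\mathrm{P}_{s}$ with $\mu(\cdot|\bar{X}(s))$ from (\ref{Eq:AccumulationPointsMeasureViable}), and read off the identity $\int\bigl[\bigl(\sigma+\tfrac{\partial\chi}{\partial y}\tau_{1}\bigr)z_{1}+\tfrac{\partial\chi}{\partial y}\tau_{2}\,z_{2}\bigr]d\mathrm{P}_{s}=\dot{\bar{X}}(s)-r(\bar{X}(s))$ from (\ref{Eq:AccumulationPointsProcessViable}), while the paper first reduces to the explicit deterministic variational problem with shifted drift $\bar{r}(s,x)$ (via part (ii) of Theorem~\ref{T:MainTheorem2} together with Theorem~\ref{T:ExplicitSolutionRegime1}) and then completes the square. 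Your route is slightly more self-contained, since the relaxed-control identities replace the recomputation of the modified local rate function, at the cost of having to justify the weak limit of the control-dependent cross term; you handle this correctly through the uniform square-integrability of near-optimal controls and the time-mollification estimate of Lemma~3.2 in \cite{DupuisSpiliopoulos}. The concluding subsolution/chain-rule algebra and the identification of $G_{1}$ via Lemma~\ref{T:MainTheorem21} are identical in substance to the paper's argument.
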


Before proceeding with the proof, we notice that the feedback control (\ref{Eq:feedback_controlReg1}) is essentially implied by the solution to the variational problem associated with the local rate function in the definition of the action functional for Regime 1, Theorem \ref{T:ExplicitSolutionRegime1}.
\begin{proof}

Note that under the given conditions $\bar{u}(s,x,y;1)$ is Lipschitz continuous in
$(x,y)$, continuous in $(t,x,y)$, and uniformly bounded. For notational convenience, we omit the subscript $1$ from $G_{1}$ and $\bar{u}_{1}$ and we write $(t,x,y)$ in place of $(t_{0},x_{0},y_{0})$.

Boundedness of $h$ and $\bar{u}$ imply by  the representation formula (\ref{Eq:VariationalRepresentation})  and by the Lemma 4.3 of \cite{DupuisSpiliopoulosWang} that
\begin{align}
\lefteqn{-\epsilon\log Q^{\epsilon}(t,x,y;\bar{u})}\label{Eq:ToBeBounded}\\
&  =\inf_{v\in\mathcal{A}}\mathrm{E}\left[  \frac{1}{2}\int_{t}^{T}\left\Vert
v(s)\right\Vert ^{2}ds-\int_{t}^{T}\Vert\bar{u}(s,\hat{X}^{\epsilon}(s),\hat{Y}^{\epsilon}
(s))\Vert^{2}ds+2h(\hat{X}^{\epsilon}(T))\right]  ,\nonumber
\end{align}
where $v(s)=(v_{1}(s),v_{2}(s))$,
$\bar{u}(s,x,y;1)=(\bar{u}_{1}(s,x,y;1),\bar{u}_{2}(s,x,y;1))$ and
$(\hat{X},\hat{Y})$ satisfying

\begin{eqnarray}
d\hat{X}^{\epsilon}(s)&=&\left[  \frac{\epsilon}{\delta}b\left(  \hat{X}^{\epsilon}(s)%
,\hat{Y}^{\epsilon}(s)\right)+\bar{c}\left(  \hat{X}^{\epsilon}(s)%
,\hat{Y}^{\epsilon}(s)\right)+\sigma\left(  \hat{X}_{t}^{\epsilon},\hat{Y}_{t}^{\epsilon}\right)  v_{1}(s)\right]   ds+\sqrt{\epsilon}%
\sigma\left(  \hat{X}^{\epsilon}(s),\hat{Y}^{\epsilon}(s)\right)
dW(s), \nonumber\\
d\hat{Y}^{\epsilon}(s)&=&\frac{1}{\delta}\left[  \frac{\epsilon}{\delta}f\left(  \hat{X}^{\epsilon}(s)
,\hat{Y}^{\epsilon}(s)\right)  +\bar{g}\left(  \hat{X}^{\epsilon}(s)
,\hat{Y}^{\epsilon}(s)\right)+\tau_{1}\left(  \hat{X}^{\epsilon}(s)
,\hat{Y}^{\epsilon}(s)\right)v_{1}(s)+\tau_{2}\left(  \hat{X}^{\epsilon}(s)
,\hat{Y}^{\epsilon}(s)\right)v_{2}(s)\right]   ds\nonumber\\
& &\hspace{3.5cm}+\frac{\sqrt{\epsilon}}{\delta}\left[
\tau_{1}\left(  \hat{X}^{\epsilon}(s),\hat{Y}^{\epsilon}(s)\right)
dW(s)+\tau_{2}\left(  \hat{X}^{\epsilon}(s),\hat{Y}^{\epsilon}(s)\right)dB(s)\right],\label{Eq:Main3}\\
\hat{X}^{\epsilon}(0)&=&x_{0},\hspace{0.2cm}\hat{Y}^{\epsilon}(0)=y_{0}\nonumber
\end{eqnarray}

with
\begin{eqnarray}
\bar{c}\left(  s,x,y\right)&=&c(x,y)-\sigma(x,y)\bar{u}_{1}(s,x,y;1)\nonumber\\
\bar{g}\left(  s,x,y\right)&=&g(x,y)-\tau_{1}(x,y)\bar{u}_{1}(s,x,y;1)-\tau_{2}(x,y)\bar{u}_{2}(s,x,y;1)
\end{eqnarray}

The next step is to take the limit infimum in the representation
(\ref{Eq:ToBeBounded}). The right hand side of
(\ref{Eq:ToBeBounded}) can be bounded by below in the limit
$\epsilon\downarrow 0$ using statement (ii) of Theorem
\ref{T:MainTheorem2} with two differences. The first difference is
that the functions $c,g$ in the definition of the first component of
appropriate viable pair $(\lambda_{1},\mathcal{L}^{1})$, see
Definition \ref{Def:ThreePossibleFunctions}, are replaced with
$\bar{c},\bar{g}$. Using Theorem \ref{T:ExplicitSolutionRegime1},
the local rate function takes the form
\begin{equation*}
L_{1}(x,\beta)=\frac{1}{2}\left(\beta-\bar{r}(s,x)\right)^{T}q^{-1}(x)\left(\beta-\bar{r}(s,x)\right)
\end{equation*}
where $\bar{r}(s,x)=r(x)-\int_{\mathcal{Y}}\left(\sigma
\bar{u}_{1}+\frac{\partial \chi}{\partial
y}\tau_{1}\bar{u}_{1}+\frac{\partial \chi}{\partial
y}\tau_{2}\bar{u}_{2} \right)(s,x,y)\mu(dy|x)$. Here $\mu(dy|x)$ is the invariant measure defined in Condition \ref{A:Assumption2}. This takes care of the limit of first term on the right hand side of (\ref{Eq:ToBeBounded}). The second
difference is the presence of the additional integral term $-\int_{t}^{T}\Vert\bar{u}(s,\hat{X}^{\epsilon}(s),\hat{Y}^{\epsilon}(s))\Vert^{2}ds$.
 Using classical averaging arguments, see
\cite{BLP}, appropriately modified to treat controlled processes, as in Lemma 3.2
in \cite{DupuisSpiliopoulos}, this term can be replaced in the limit as $\epsilon\downarrow 0$
by its averaged version with respect to $\mu(dy|x)$. This takes care of the limit of second term on the right hand side of (\ref{Eq:ToBeBounded}).
Putting these together, we have
\begin{align}
\liminf_{\epsilon\rightarrow0}-\epsilon\log Q^{\epsilon}(t,x;\bar
{u})& \geq \inf_{\phi,\phi(t)=x}\left[ \int_{t}^{T}L_{1}(\phi(s),\dot{\phi}(s))ds\right.\nonumber\\
& \left.-\int_{t}^{T}\int_{\mathcal{Y}}\left[\left\Vert \bar
{u}_{1}(s,\phi(s),y)\right\Vert ^{2}+\left\Vert \bar
{u}_{2}(s,\phi(s),y)\right\Vert ^{2}\right]\mu(dy|\phi(s))ds+2h(\phi(T))\right]
\end{align}
By recalling the formula for $\bar{u}=(\bar{u}_{1},\bar{u}_{2})$ we have for $\phi\in\mathcal{AC}([t,T];\mathbb{R}^{m})$
\begin{equation*}
\int_{t}^{T}\int_{\mathcal{Y}}\left[\left\Vert \bar
{u}_{1}(s,\phi(s),y)\right\Vert ^{2}+\left\Vert \bar
{u}_{2}(s,\phi(s),y)\right\Vert ^{2}\right]\mu(dy|\phi(s))ds= \int_{t}^{T}\langle\nabla_{x}\bar{U}(s,\phi(s)),q(x)\nabla
_{x}\bar{U}(s,\phi(s))\rangle ds
\end{equation*}

Thus, we have

\begin{align}
\lefteqn{\liminf_{\epsilon\rightarrow0}-\epsilon\log Q^{\epsilon}(t,x;\bar
{u})}\nonumber\\
& \geq \inf_{\phi,\phi(t)=x}\left[ \frac{1}{2}\int_{t}^{T}\left\Vert
\dot{\phi}(s)- r(\phi(s))-\int_{\mathcal{Y}}\left(\sigma
\bar{u}_{1}+\frac{\partial \chi}{\partial
y}\tau_{1}\bar{u}_{1}+\frac{\partial \chi}{\partial
y}\tau_{2}\bar{u}_{2} \right)(s,\phi(s),y)\mu(dy|\phi(s))\right\Vert
_{q^{-1}(\phi(s))}^{2}ds\right. \nonumber\\
& \hspace{4cm}\left.  -\int_{t}^{T}\int_{\mathcal{Y}}\left[\left\Vert \bar
{u}_{1}(s,\phi(s),y)\right\Vert ^{2}+\left\Vert \bar
{u}_{2}(s,\phi(s),y)\right\Vert ^{2}\right]\mu(dy|\phi(s))ds+2h(\phi(T))\right]\nonumber\\
&= \inf_{\phi\in\mathcal{AC}([t,T];\mathbb{R}^{m}),\phi(t)=x}\left[\int_{t}^{T}\left[\frac{1}{2}\left\Vert \dot{\phi}(s)-r(\phi(s))\right\Vert
_{q^{-1}(\phi(s))}^{2}-\langle\dot{\phi}(s)-r(\phi
(s)),\nabla_{x}\bar{U}(s,\phi(s))\rangle \right]ds\right.\nonumber\\
&  \hspace{4cm}\left.~~~~~-\frac{1}{2}\int_{t}^{T}\langle\nabla_{x}\bar{U}(s,\phi(s)),q(x)\nabla
_{x}\bar{U}(s,\phi(s))\rangle ds+2h(\phi(T))\right]=\nonumber\\
&= \inf_{\phi\in\mathcal{AC}([t,T];\mathbb{R}^{m}),\phi(t)=x}\left[S_{tT}^{1}(\phi)+2h(\phi(T))\right.\nonumber\\
&  \hspace{3cm}\left.-\int_{t}^{T}\left(\langle\dot{\phi}(s)-r(\phi
(s)),\nabla_{x}\bar{U}(s,\phi(s))\rangle+\frac{1}{2}\langle\nabla_{x}\bar{U}(s,\phi(s)),q(x)\nabla
_{x}\bar{U}(s,\phi(s))\rangle\right) ds\right]
\label{Eq:VarLim2}
\end{align}
In the first equality  we have used the definition of
$\bar{u}=(\bar{u}_{1},\bar{u}_{2})$ whereas in the second equality we used the definition of the action functional by Theorem
\ref{T:MainTheorem3}.

Given an arbitrary $\phi\in\mathcal{AC}([t,T];\mathbb{R}^{m})$ with
$\phi(t)=x$, the subsolution property implies that

\begin{align*}
& -\langle\dot{\phi}(s)-r(\phi(s)),\nabla_{x}\bar{U}(s,\phi(s))\rangle
-\frac{1}{2}\langle\nabla_{x}\bar{U}(s,\phi(s)),q(\phi(s))\nabla_{x}\bar
{U}(s,\phi(s))\rangle\nonumber\\
&\hspace{8cm}\geq -\partial_{t}\bar{U}(s,\phi(s))-\langle\nabla_{x}\bar{U}(s,\phi(s)),\dot{\phi}(s)\rangle\nonumber\\
&\hspace{8cm}=-\frac{d}{ds}\bar{U}(s,\phi(s))  \nonumber
\end{align*}
Let us now integrate both sides on $[t,T]$. Using  the terminal condition $\bar{U}(T,x)\leq h(x)$, we have
\begin{equation*}
 -\int_{t}^{T}\left(\langle\dot{\phi}(s)-r(\phi(s)),\nabla_{x}\bar{U}(s,\phi(s))\rangle
+\frac{1}{2}\langle\nabla_{x}\bar{U}(s,\phi(s)),q(\phi(s))\nabla_{x}\bar
{U}(s,\phi(s))\rangle\right)ds \geq -h(\phi(T))+\bar{U}(t,x)
\end{equation*}

Thus, the right hand side of (\ref{Eq:VarLim2}) is bounded from below
by
\[
\inf_{\phi\in\mathcal{AC}([t,T];\mathbb{R}^{m}),\phi(t)=x}\left[S_{tT}^{1}(\phi)+h(\phi(T))\right]+\bar{U}(t,x).
\]
Thus, since by definition  $G(t,x)=\inf_{\phi\in\mathcal{AC}([t,T];\mathbb{R}^{m}),\phi(t)=x}\left[S_{tT}^{1}(\phi)+h(\phi(T))\right]$ we can conclude that
\[
\liminf_{\epsilon\rightarrow0}-\epsilon\log Q^{\epsilon}(t,x;\bar{u})\geq
G(t,x)+\bar{U}(t,x).
\]
This concludes the proof. \hfill
\end{proof}

\subsection{Importance sampling for Regime 2.}\label{SS:IS_Regime2}
Let us now study the construction of efficient importance samplings
for Regime 2. The situation here is more subtle than it is for Regime $1$. This is also seen
from the large deviations principle, Theorem \ref{T:MainTheorem21}.
The key difference between the LDP for Regimes 1 and 2 is that
$\mathcal{L}_{z_{1},z_{2},x}^{2}$ depends on $(z_{1},z_{2})$, while
$\mathcal{L}_{x}^{1}$ did not. This means that relations between the
elements of a viable pair are more complex, and in particular that
the joint distribution of the control $(z_{1},z_{2})$ and fast
variable $y$ is important. Thus, in contrast to Regime 1 where the
action functional can be written down explicitly, in the case of
Regime 2 the formula of the action functional is in terms of value
function to a variational problem.

This implicit characterization partially carries over to the
importance sampling. The optimal control is again in terms of a
corresponding cell problem as it was for Regime 1 (recall the cell
problem (\ref{Eq:CellProblem}) for Regime 1). The difference here is
that the cell problem is defined implicitly rather than explicitly.
As we discuss in Section \ref{S:HJB}, this is related to
the homogenization theory of HJB equations.

In what follows, the subscript $\gamma$ is to emphasize the dependence on $\gamma$ (see (\ref{Def:ThreePossibleRegimes})). Define
\begin{eqnarray}
H_{\gamma}(x,y,p,q,P,Q,R)&=&\inf_{u_{1},u_{2}\in\mathcal{Z}}\left[ \frac{1}{2}\sigma\sigma^{T}:P+\gamma\frac{1}{2}\left(\tau_{1}\tau_{1}^{T}+\tau_{2}\tau_{2}^{T}\right):Q+ \gamma \tau_{1}\sigma^{T}:R+\left<\gamma b+c+\sigma u_{1},p\right> \right.\nonumber\\
& &\left. +\left<\gamma f+g+\tau_{1}u_{1}+\tau_{2}u_{2},q\right>+\frac{1}{2}\left\Vert u_{1}\right\Vert^{2}+\frac{1}{2}\left\Vert u_{2}\right\Vert^{2}\right]\nonumber\\
& =&\frac{1}{2}\sigma\sigma^{T}:P+\gamma\frac{1}{2}\left(\tau_{1}\tau_{1}^{T}+\tau_{2}\tau_{2}^{T}\right):Q+ \gamma \tau_{1}\sigma^{T}:R+\left<\gamma b+c,p\right> \nonumber\\
& &+ \left<\gamma f+g,q\right>-\frac{1}{2}\left\Vert \sigma^{T}p+\tau_{1}^{T}q\right\Vert^{2}-\frac{1}{2}\left\Vert \tau^{T}_{2}q\right\Vert^{2}
\label{Eq:CellProblemRegime2_0}
\end{eqnarray}
The infimum in (\ref{Eq:CellProblemRegime2_0}) is attained for
\begin{equation}
u_{1}=-\sigma^{T}(x,y) p-\tau_{1}^{T}(x,y)q \textrm{ and } u_{2}=-\tau^{T}_{2}(x,y)q
\end{equation}
The control $u=(u_{1},u_{2})$ motivates the asymptotically optimal change of measure in Theorem \ref{T:UniformlyLogEfficientReg2}. Let us now define the associated HJB equation of interest together with the associated cell problem. We start with the cell problem. For each fixed $(x,p)$ consider the unique value $\bar{H}_{\gamma}(x,p)$ such that there is a periodic solution $\xi$ to the cell problem
\begin{equation}
H_{\gamma}(x,y,p,\nabla_{y}\xi_{\gamma},0,\nabla^{2}_{y}\xi_{\gamma},0)=\bar{H}_{\gamma}(x,p)\label{Eq:CellProblemRegime2_1}
\end{equation}
The unknown in (\ref{Eq:CellProblemRegime2_1}) is the pair $\left(\xi_{\gamma},\bar{H}_{\gamma}\right)$. As it can be obtained by Theorem II.2 in  \cite{ArisawaLions}, $\xi_{\gamma}$ is the unique (up to an additive constant) periodic solution to (\ref{Eq:CellProblemRegime2_1})
such that $\xi_{\gamma}\in\mathcal{C}^{2}(\mathbb{R}^{d-m})$. Moreover, $\bar{H}_{\gamma}(x,p)$ is continuous in $x$ and concave in $p$ (see Propositions $11$ and $12$ in \cite{AlvarezBardi2001}).

Consider then the HJB equation (\ref{Eq:HJBequation2}) with $\bar{H}(x,p)$ replaced by $\bar{H}_{\gamma}(x,p)$. Under the standing assumptions,
this HJB equation has a unique viscosity solution which we denote by  $G_{2}(s,x)$. Actually, under mild conditions the value function of the variational problem (\ref{Eq:VarProb}) is this unique viscosity solution. This can be derived as in \cite{AlvarezBardi2001} and it will be recalled in Section \ref{S:HJB}.

In accordance to what we did for Regime 1, we consider a classical
subsolution to that HJB equation, which we denote by
$\bar{U}_{2}(s,x)$, where the Hamiltonian is $\bar{H}_{\gamma}(x,p)$.
 Notice that $\xi_{\gamma}$  depends on the triple
$(x,y,p)$ with $(x,p)$ seen as parameters. In the computations $p$
will be substituted by the gradient of the subsolution
$\nabla_{x}\bar{U}_{2}(s,x)$. So, in principle $\xi_{\gamma}$
and $\bar{U}_{2}(s,x)$ are coupled. This coupling is in line
with the coupling that appears in large deviations, see Theorem
\ref{T:MainTheorem21}.

Similarly to what we did for Regime $1$, we impose stronger
regularity conditions. This is done to ease exposition. In
particular, the following condition guarantees the feedback control
used in importance sampling is uniformly bounded and that we can
apply It\^{o} formula directly without approximations.  Thus, a
number of technicalities are circumvented.

\begin{condition}
\label{Cond:ExtraRegReg2} $\bar{U}_{2}$ has continuous derivatives up to order $1$ in
$t$ and order $2$ in $x$, and the first and second derivatives in $x$ are
uniformly bounded. Similarly,
$\xi_{\gamma}$ is twice continuous differentiable in $(x,y,p)$, periodic with respect to $y$ and all of the mixed derivatives up to order $2$ are bounded.
\end{condition}

The following verification theorem is the analogous of Theorem \ref{T:UniformlyLogEfficientReg1} for Regime 2.

\begin{theorem}
\label{T:UniformlyLogEfficientReg2} Let $\{\left(X^{\epsilon}(s),
Y^{\epsilon}(s) \right),\epsilon>0\}$ be the solution to
(\ref{Eq:Main}) for $s\in[t_{0},T]$ with initial point $(x_{0},y_{0})$ at time $t_{0}$. Assume that we are considering Regime $2$. Consider a bounded and continuous function
$h:\mathbb{R}^{m}\mapsto\mathbb{R}$ and assume Conditions
\ref{A:Assumption1}.
Let $\xi_{\gamma}(x,y,p)$ be the unique (up to a constant) periodic solution to the cell problem (\ref{Eq:CellProblemRegime2_1}) and $\bar{U}_{2}(s,x)$  be a classical subsolution
according to Definition \ref{Def:ClassicalSubsolution} and assume
Condition \ref{Cond:ExtraRegReg2}.  Define the control $\bar{u}(s,x,y;2)=(\bar{u}_{1}(s,x,y;2),\bar{u}_{2}(s,x,y;2))$ by
\begin{equation*}
\bar{u}(s,x,y;2)=\left(-\sigma^{T}(x,y)\nabla_{x}\bar{U}_{2}(s,x)-\tau_{1}^{T}(x,y)\nabla_{y}\xi_{\gamma}(x,y,\nabla_{x}\bar{U}_{2}\left(s,x\right)),
-\tau_{2}^{T}(x,y)\nabla_{y}\xi_{\gamma}\left(x,y,\nabla_{x}\bar{U}_{2}(s,x)\right)\right)\label{Eq:feedback_controlReg2}
\end{equation*}
Then the conclusion of Theorem \ref{T:UniformlyLogEfficient} holds, i.e.
\begin{equation*}
\liminf_{\epsilon\rightarrow0}-\epsilon\ln Q^{\epsilon}(t_{0},x_{0},y_{0};\bar{u}(\cdot;2))\geq
G_{2}(t_{0},x_{0})+\bar{U}_{2}(t_{0},x_{0}). \label{Eq:GoalRegime2Subsolution}%
\end{equation*}
\end{theorem}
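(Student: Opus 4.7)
The plan is to follow the general structure of the Regime 1 proof of Theorem \ref{T:UniformlyLogEfficientReg1}, with the explicit local rate function of that setting replaced by the implicit Hamiltonian description through the corrector $\xi_\gamma$. First I would invoke the Bou\'e-Dupuis representation together with Lemma 4.3 of \cite{DupuisSpiliopoulosWang} to write
\[
-\epsilon\log Q^\epsilon(t_0,x_0,y_0;\bar u(\cdot;2)) = \inf_{v\in\mathcal{A}}\mathbb{E}\!\left[\tfrac12\!\int_{t_0}^{T}\!\|v\|^2 ds - \!\int_{t_0}^{T}\!\|\bar u(s,\hat X^\epsilon,\hat Y^\epsilon;2)\|^2 ds + 2h(\hat X^\epsilon(T))\right],
\]
where $(\hat X^\epsilon,\hat Y^\epsilon)$ solves the controlled SDE (\ref{Eq:Main3}) with the $\bar u$-perturbed drifts $\bar c,\bar g$. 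The Regime 2 analogue of Theorem \ref{T:MainTheorem1} applied to this perturbed system (the perturbation is bounded and Lipschitz thanks to Conditions \ref{A:Assumption1} and \ref{Cond:ExtraRegReg2}) yields tightness of $(\hat X^\epsilon,\mathrm{P}^{\epsilon,\Delta})$ and identifies any subsequential limit $(\phi,\mathrm{P})$ as a viable pair for the operator/drift pair obtained by substituting $\bar c,\bar g$ for $c,g$ in $(\lambda_2,\mathcal{L}^2)$; I denote this pair $(\lambda_2^{\bar u},\mathcal{L}^{2,\bar u})$. Fatou's lemma then gives
\[
\liminf_{\epsilon\downarrow 0}\bigl(-\epsilon\log Q^\epsilon\bigr) \ \geq\ \inf_{(\phi,\mathrm{P})\in\mathcal{V}_{(\lambda_2^{\bar u},\mathcal{L}^{2,\bar u})}}\!\left[\tfrac12\!\int\|z\|^2 d\mathrm{P} -\!\int\|\bar u\|^2 d\mathrm{P} + 2h(\phi(T))\right].
\]

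The core of the argument is a pointwise inequality obtained by combining the subsolution property with the cell problem. Since $\bar u$ is the unique minimizer realizing the infimum in (\ref{Eq:CellProblemRegime2_0}) at $p=\nabla_x\bar U_2$, $q=\nabla_y\xi_\gamma$, $Q=\nabla_y^2\xi_\gamma$, completion of squares gives
\[
\mathcal{L}^2_{u_1,u_2,x}\xi_\gamma + \langle\lambda_2(x,y,u_1,u_2),\nabla_x\bar U_2\rangle + \tfrac12\|u\|^2 \ =\ H_\gamma(x,y,\nabla_x\bar U_2,\nabla_y\xi_\gamma,0,\nabla_y^2\xi_\gamma,0) + \tfrac12\|u-\bar u(s,x,y;2)\|^2
\]
for every $u=(u_1,u_2)$ and every $y$. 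The cell problem identity $H_\gamma=\bar H_\gamma(x,\nabla_x\bar U_2)$ and the subsolution property $\partial_s\bar U_2+\bar H_\gamma\geq 0$ then rearrange this to
\[
\partial_s\bar U_2 + \mathcal{L}^2_{u_1,u_2,x}\xi_\gamma + \langle\lambda_2(x,y,u_1,u_2),\nabla_x\bar U_2\rangle + \tfrac12\|u\|^2 \ \geq\ \tfrac12\|u-\bar u\|^2.
\]
The slack $\tfrac12\|u-\bar u\|^2$ on the right is precisely what will reconstitute the large deviations rate $S^2(\phi)$ in the final bound; this replaces the explicit quadratic algebra available in Regime 1.

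I would then integrate the displayed inequality against $d\mathrm{P}(ds\,dz_1\,dz_2\,dy)$ with $u=z$. The identities $\mathcal{L}^2 = \mathcal{L}^{2,\bar u} + (\tau_1\bar u_1+\tau_2\bar u_2)\!\cdot\!\nabla_y$ and $\lambda_2 = \lambda_2^{\bar u} + \sigma\bar u_1$ let me use the viability relations $\int\mathcal{L}^{2,\bar u}\xi_\gamma d\mathrm{P}_s = 0$ and $\int\lambda_2^{\bar u}d\mathrm{P}_s = \dot\phi(s)$, while substituting the explicit form of $\bar u$ yields the collapse identity $\int[(\tau_1\bar u_1+\tau_2\bar u_2)\!\cdot\!\nabla_y\xi_\gamma + \langle\sigma\bar u_1,\nabla_x\bar U_2\rangle] d\mathrm{P}_s = -\int\|\bar u\|^2 d\mathrm{P}_s$. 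Combining these with $\partial_s\bar U_2(s,\phi(s))+\langle\dot\phi,\nabla_x\bar U_2\rangle = \tfrac{d}{ds}\bar U_2(s,\phi(s))$, integrating in $s$, and invoking the terminal condition $\bar U_2(T,\cdot)\leq h$ one obtains
\[
\tfrac12\!\int\|z\|^2 d\mathrm{P} - \!\int\|\bar u\|^2 d\mathrm{P} + 2h(\phi(T)) \ \geq\ \bar U_2(t_0,x_0) + h(\phi(T)) + \tfrac12\!\int\|z-\bar u\|^2 d\mathrm{P}.
\]
The change of variables $\tilde z = z-\bar u$ maps $\mathcal{V}_{(\lambda_2^{\bar u},\mathcal{L}^{2,\bar u})}$ bijectively onto $\mathcal{V}_{(\lambda_2,\mathcal{L}^2)}$, and the slack $\tfrac12\int\|z-\bar u\|^2 d\mathrm{P} = \tfrac12\int\|\tilde z\|^2 d\tilde{\mathrm{P}}$ is therefore at least $S^2(\phi)$ by Lemma \ref{T:MainTheorem21}. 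Thus the cost is bounded below by $\bar U_2(t_0,x_0) + S^2(\phi) + h(\phi(T)) \geq \bar U_2(t_0,x_0) + G_2(t_0,x_0)$, which is (\ref{Eq:GoalSubsolution}).

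The main obstacle, absent in Regime 1, is that $\xi_\gamma$ and $\bar H_\gamma$ are defined only implicitly through the cell problem (\ref{Eq:CellProblemRegime2_1}) and $\xi_\gamma$ depends nonlinearly on $\nabla_x\bar U_2$, so all manipulations must be done at the Hamiltonian level rather than through explicit quadratic algebra. Condition \ref{Cond:ExtraRegReg2} is tailored precisely to this: it makes $\bar u$ uniformly bounded and Lipschitz in $(x,y)$, which is needed for strong existence of (\ref{Eq:Main3}) and for Theorem \ref{T:MainTheorem1} to transfer to the $\bar u$-modified coefficients, and it justifies the classical (as opposed to viscosity) use of the pointwise subsolution/cell-problem inequality. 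The collapse identity relating the $\int\mathcal{L}^2\xi_\gamma\,d\mathrm{P}_s$ terms to $-\int\|\bar u\|^2 d\mathrm{P}_s$ is a direct computation, but it is what makes the $\tfrac12\|u-\bar u\|^2$ slack visible in its clean form and is the Regime 2 replacement for the explicit Legendre-Fenchel computation of Regime 1.
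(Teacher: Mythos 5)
Your argument is correct in substance, but it is organized quite differently from the paper's proof of this theorem. You follow the ``limit first, then algebra on viable pairs'' strategy that the paper uses for Regime $1$: after the Bou\'e--Dupuis representation you pass to the weak limit of $(\hat X^{\epsilon},\mathrm{P}^{\epsilon,\Delta})$ for the $\bar u$-perturbed system, do the completion-of-squares/cell-problem/subsolution algebra at the level of the limiting viable pair, and recover $S^{2}(\phi)$ by shifting the control variable $\tilde z=z-\bar u$ in the occupation measure. The paper instead works entirely at the prelimit level: it applies It\^o's formula to the corrector $\xi_{\gamma}(s,\hat X^{\epsilon},\hat Y^{\epsilon})$ and to $\bar U_{2}$, uses the cell problem and the subsolution property pathwise, collects remainder terms $R_{1},R_{2}$ that vanish in $L^{2}$ uniformly in the control, and arrives at the pathwise inequality (\ref{Eq:UniformlyLogEfficient2Regime2_5}); the shifted control $\bar v=v-\bar u$ then turns the surviving cost $\frac12\int\|v-\bar u\|^{2}ds+h(\hat X^{\epsilon}(T))$ back into the variational representation (\ref{Eq:VariationalRepresentation}) of the original functional, so the proof closes by invoking the already-proved Laplace lower bound, part (ii) of Theorem \ref{T:MainTheorem2}, as a black box. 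What the paper's route buys is that it never needs a limit theorem for the perturbed dynamics: your route requires an extension of Theorem \ref{T:MainTheorem1} (Regime $2$) to the time-inhomogeneous coefficients $\bar c(s,x,y),\bar g(s,x,y)$, an approximation argument to apply the viability relation (\ref{Eq:AccumulationPointsMeasureViable}) to the $s$- and $x$-dependent test function $y\mapsto\xi_{\gamma}(\phi(s),y,\nabla_{x}\bar U_{2}(s,\phi(s)))$ (the relation is stated for fixed $F\in\mathcal{C}^{2}(\mathcal{Y})$, so you need a countable dense family plus continuity in $s$, available under Condition \ref{Cond:ExtraRegReg2}), and a uniform $L^{2}$ bound on near-optimal controls plus an averaging argument for the $-\int\|\bar u\|^{2}ds$ term in the Fatou step. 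These are all fillable with the techniques the paper uses for its Regime $1$ importance-sampling theorem, so I regard them as technical caveats rather than gaps. What your route buys is conceptual transparency: the Hamiltonian-level completion of squares makes the slack $\frac12\|u-\bar u\|^{2}$, and hence the reappearance of $S^{2}$, explicit, and it shows that the Regime $1$ proof strategy extends verbatim once the explicit quadratic rate function is replaced by the implicit Hamiltonian/corrector description; the paper's prelimit It\^o argument is shorter precisely because it sidesteps the implicit characterization of the limit by re-using the LDP lower bound.
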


\begin{proof}
For notational convenience, we omit the subscripts $2$ and $\gamma$ from
$G_{2},\bar{U}_{2},\xi_{\gamma}$ and $\bar{H}_{\gamma}$. Also we write
$\xi(s,x,y)$ in place of
$\xi\left(x,y,\nabla_{x}\bar{U}(s,x)\right)$ and $(t,x,y)$ in place of $(t_{0},x_{0},y_{0})$ for the initial point.

The first step is to write, as in Regime $1$, that
\begin{align}
\lefteqn{-\epsilon\log Q^{\epsilon}(t,x,y;\bar{u}(;2))}\label{Eq:GoalRegime2_3}\\
&  =\inf_{v\in\mathcal{A}}\mathrm{E}\left[  \frac{1}{2}\int_{t}^{T}\left\Vert
v(s)\right\Vert ^{2}ds-\int_{t}^{T}\Vert\bar{u}(s,\hat{X}^{\epsilon}(s),\hat{Y}^{\epsilon}
(s);2)\Vert^{2}ds+2h(\hat{X}^{\epsilon}(T))\right]  ,\nonumber
\end{align}
where $(\hat{X},\hat{Y})$  satisfies (\ref{Eq:Main3}) with
$v(s)=(v_{1}(s),v_{2}(s))$ and
$\bar{u}(s,x,y;2)=(\bar{u}_{1}(s,x,y;2),\bar{u}_{2}(s,x,y;2))$.

The next step is to rewrite the right hand side of
(\ref{Eq:GoalRegime2_3}). Recall the definition of the operator
$\mathcal{L}^{2}_{z,x}$ from Definition
\ref{Def:ThreePossibleOperators} with $z=(z_{1},z_{2})$. Denote by
$\mathcal{L}^{\epsilon/\delta,2}_{z,x}$ the operator
$\mathcal{L}^{2}_{z,x}$ with $\frac{\epsilon}{\delta}$ in place of
$\gamma$. We will write $\mathcal{L}^{\epsilon/\delta,2}_{0,x}$ to
denote the operator with the control variable $z=0$.

Apply It\^{o} formula to $\xi(s,x,y)$. After some term
rearrangement,  we get
\begin{equation}
-\int_{t}^{T}\mathcal{L}^{\epsilon/\delta,2}_{0,\hat{X}^{\epsilon}(s)}\xi\left(s,\hat{X}^{\epsilon}(s),\bar{Y}^{\epsilon}(s)\right)
ds=
\int_{t}^{T}\left<\nabla_{y}\xi,\tau_{1}\left(v_{1}-\bar{u}_{1}\right)+\tau_{2}\left(v_{2}-\bar{u}_{2}\right)\right>\left(s,\hat{X}^{\epsilon}(s),\hat{Y}^{\epsilon}(s)\right)ds+
R_{1}(\epsilon,v)\label{Eq:UniformlyLogEfficient2Regime2_1}
\end{equation}
where the random variable $R_{1}(\epsilon,v)$ is
\begin{eqnarray}
R_{1}(\epsilon,v)&=& \delta\left[\xi(x,x/\delta,t)-\xi\left(T,\hat{X}^{\epsilon}(T),\hat{Y}^{\epsilon}(T)\right)+\int_{t}^{T}\partial_{t}\xi\left(s,\hat{X}^{\epsilon}(s),\hat{Y}^{\epsilon}(s)\right)ds\right]+\nonumber\\
& &+
\epsilon\int_{t}^{T}\tau_{1}^{T}\sigma:\nabla_{x}\nabla_{y}\xi\left(s,\hat{X}^{\epsilon}(s),\hat{Y}^{\epsilon}(s)\right)ds\nonumber\\
&+&\delta\int_{t}^{T}\left[\left<\frac{\epsilon}{\delta}b+c+\sigma(v_{1}(s)-\bar{u}_{1}),\nabla_{x}\xi\right>\left(s,\hat{X}^{\epsilon}(s),\hat{Y}^{\epsilon}(s)\right) +\frac{\epsilon}{2}\sigma\sigma^{T}:\nabla_{xx}\xi\left(s,\hat{X}^{\epsilon}(s),\hat{Y}^{\epsilon}(s)\right)\right]ds\nonumber\\
&+&\sqrt{\epsilon}\delta\int_{t}^{T}\left<\nabla_{x}\xi,\sigma
dW(s)\right>\left(s,\hat{X}^{\epsilon}(s),\hat{Y}^{\epsilon}(s)\right)
+\sqrt{\epsilon}\int_{t}^{T}\left<\nabla_{y}\xi,\tau_{1} dW(s)+\tau_{2}dB(s)\right>\left(s,\hat{X}^{\epsilon}(s),\hat{Y}^{\epsilon}(s)\right)\nonumber
\end{eqnarray}

Under our assumptions, the random variable $R_{1}(\epsilon,v)$ converges in $L^{2}$ to zero as $\epsilon,\delta\downarrow 0$ uniformly in $v\in\mathcal{A}$.

Next, we apply It\^{o} formula to $\bar{U}(t,x)$. Omitting some
function arguments for notational convenience and using the
subsolution property for $\bar{U}$, we get
\begin{eqnarray}
h(\hat{X}^{\epsilon}(T))&\geq&\bar{U}(t,x)+
\int_{t}^{T}\left[-\bar{H}(\hat{X}^{\epsilon}(s),\nabla_{x}\bar{U})+\left<\nabla_{x}\bar{U},\frac{\epsilon}{\delta}b+c+\sigma(v_{1}(s)-\bar{u}_{1})\right>\right]
\left(s,\hat{X}^{\epsilon}(s),\hat{Y}^{\epsilon}(s)\right)ds+\nonumber\\
&+&\frac{\epsilon}{2}\int_{t}^{T}\sigma\sigma^{T}\left(\hat{X}^{\epsilon}(s),\hat{Y}^{\epsilon}(s)\right):\nabla_{x}\nabla_{x} \bar{U} \left(s,\hat{X}^{\epsilon}(s)\right)ds+\nonumber\\
& &+
\sqrt{\epsilon}\int_{t}^{T}\left<\nabla_{x}\bar{U}\left(s,\hat{X}^{\epsilon}(s)\right),\sigma\left(\hat{X}^{\epsilon}(s),\hat{Y}^{\epsilon}(s)\right)dW(s)\right>\label{Eq:UniformlyLogEfficient2Regime2_2}
\end{eqnarray}
Recalling the definition of $\bar{H}$ by (\ref{Eq:CellProblemRegime2_1}) and adding and subtracting
the term
$\int_{t}^{T}\mathcal{L}^{\epsilon/\delta,2}_{0,\hat{X}^{\epsilon}(s)}\xi\left(s,\hat{X}^{\epsilon}(s),\hat{Y}^{\epsilon}(s)\right)
ds$, relation (\ref{Eq:UniformlyLogEfficient2Regime2_2}) becomes, after using (\ref{Eq:UniformlyLogEfficient2Regime2_1}),
\begin{eqnarray}
h(\hat{X}^{\epsilon}(T))-\bar{U}(t,x)&\geq&\left(\epsilon/\delta-\gamma\right)\int_{t}^{T}\left<\nabla_{x} \bar{U}\left(s,\hat{X}^{\epsilon}(s)\right),b\left(\hat{X}^{\epsilon}(s),\hat{Y}^{\epsilon}(s)\right)\right>ds\nonumber\\
& &
+\int_{t}^{T}\left<\nabla_{x}\bar{U}\left(s,\hat{X}^{\epsilon}(s)\right), \sigma (v_{1}(s)-\bar{u}_{1})\left(s,\hat{X}^{\epsilon}(s),\hat{Y}^{\epsilon}(s)\right)\right>ds+\nonumber\\
& &+\int_{t}^{T}\left<\nabla_{y}\xi,\tau_{1}(v_{1}(s)-\bar{u}_{1})+\tau_{2}(v_{2}(s)-\bar{u}_{2})\right>ds+\nonumber\\
& &+\frac{1}{2}\int_{t}^{T}\left\Vert \bar{u}_{1}\left(s,\hat{X}^{\epsilon}(s),\hat{Y}^{\epsilon}(s);2\right)\right\Vert^{2}ds+\frac{1}{2}\int_{t}^{T}\left\Vert \bar{u}_{2}\left(s,\hat{X}^{\epsilon}(s),\hat{Y}^{\epsilon}(s);2\right)\right\Vert^{2}ds\nonumber\\
&
&+R_{1}(\epsilon,v)+R_{2}(\epsilon,v)\label{Eq:UniformlyLogEfficient2Regime2_2_1}
\end{eqnarray}
where $R_{1}(\epsilon,v)$ was defined before and $R_{2}(\epsilon,v)$
is as follows
\begin{eqnarray}
R_{2}(\epsilon,v)&=&\frac{\epsilon}{2}\int_{t}^{T}\sigma\sigma^{T}:\nabla_{x}\nabla_{x} \bar{U} \left(\hat{X}^{\epsilon}(s),\hat{Y}^{\epsilon}(s)\right)ds+
\sqrt{\epsilon}\int_{t}^{T}\left<\nabla_{x}\bar{U}\left(s,\hat{X}^{\epsilon}(s)\right),\sigma\left(\hat{X}^{\epsilon}(s),\hat{Y}^{\epsilon}(s)\right)dW(s)\right>\nonumber\\
&
&+\int_{t}^{T}\left[\mathcal{L}^{\epsilon/\delta,2}_{0,\hat{X}^{\epsilon}(s)}\xi\left(s,\hat{X}^{\epsilon}(s),\hat{Y}^{\epsilon}(s)\right)-\mathcal{L}^{2}_{0,\hat{X}^{\epsilon}(s)}\xi\left(s,\hat{X}^{\epsilon}(s),\hat{Y}^{\epsilon}(s)\right)
\right]ds\nonumber
\end{eqnarray}
Under our assumptions, the random variable $R_{2}(\epsilon,v)$ converges in $L^{2}$ to zero as $\epsilon,\delta\downarrow 0$ uniformly in $v\in\mathcal{A}$. Recalling the definitions of the controls $\bar{u}_{1},\bar{u}_{2}$ we get

\begin{eqnarray}
h(\hat{X}^{\epsilon}(T))-\bar{U}(t,x)&\geq&\left(\epsilon/\delta-\gamma\right)\int_{t}^{T}\left<\nabla_{x} \bar{U}\left(s,\hat{X}^{\epsilon}(s)\right),b\left(\hat{X}^{\epsilon}(s),\hat{Y}^{\epsilon}(s)\right)\right>ds\nonumber\\
& &
-\int_{t}^{T}\left<\bar{u}_{1}, v_{1}(s)-\bar{u}_{1}\right>\left(s,\hat{X}^{\epsilon}(s),\hat{Y}^{\epsilon}(s)\right)ds+\nonumber\\
& &-\int_{t}^{T}\left<\bar{u}_{2},v_{2}(s)-\bar{u}_{2}\right>\left(s,\hat{X}^{\epsilon}(s),\hat{Y}^{\epsilon}(s)\right)ds+\nonumber\\
& &+\frac{1}{2}\int_{t}^{T}\left\Vert \bar{u}_{1}\left(s,\hat{X}^{\epsilon}(s),\hat{Y}^{\epsilon}(s);2\right)\right\Vert^{2}ds+\frac{1}{2}\int_{t}^{T}\left\Vert \bar{u}_{2}\left(s,\hat{X}^{\epsilon}(s),\hat{Y}^{\epsilon}(s);2\right)\right\Vert^{2}ds\nonumber\\
& &+R_{1}(\epsilon,v)+R_{2}(\epsilon,v)\nonumber
\end{eqnarray}

Writing for notational convenience $\bar{u}_{i}(s)=\bar{u}_{i}\left(s,\hat{X}^{\epsilon}(s),\hat{Y}^{\epsilon}(s);2\right)$ for $i=1,2$, we get after some term rearrangement

\begin{eqnarray}
-\int_{t}^{T}\left[\left\Vert \bar{u}_{1}\left(s\right)\right\Vert^{2}+\left\Vert \bar{u}_{2}\left(s\right)\right\Vert^{2}\right]ds&\geq& \bar{U}(t,x)-h(\hat{X}^{\epsilon}(T))+\frac{1}{2}\int_{t}^{T}\left[\left\Vert \bar{u}_{1}\left(s\right)\right\Vert^{2}+\left\Vert \bar{u}_{2}\left(s\right)\right\Vert^{2}\right]ds\nonumber\\
& &-\int_{t}^{T}\left<\bar{u}_{1}\left(s\right), v_{1}(s)\right>ds-\int_{t}^{T}\left<\bar{u}_{2}\left(s\right),v_{2}(s)\right>ds+R(\epsilon,v)\label{Eq:UniformlyLogEfficient2Regime2_5}
\end{eqnarray}
where
$R(\epsilon,v)=R_{1}(\epsilon,v)+R_{2}(\epsilon,v)+\left(\epsilon/\delta-\gamma\right)\int_{t}^{T}\left<\nabla_{x}
\bar{U}\left(s,\hat{X}^{\epsilon}(s)\right),b\left(\hat{X}^{\epsilon}(s),\hat{Y}^{\epsilon}(s)\right)\right>ds$.
Since $\frac{\epsilon}{\delta}\rightarrow\gamma$ and
$R_{1}(\epsilon,v),R_{2}(\epsilon,v)$ converge in $L^{2}$ to zero as
$\epsilon,\delta\downarrow 0$, Condition \ref{Cond:ExtraRegReg2} implies that $R(\epsilon,v)$ converges
in $L^{2}$ to zero uniformly in $v\in\mathcal{A}$.

Inserting (\ref{Eq:UniformlyLogEfficient2Regime2_5}) into
(\ref{Eq:GoalRegime2_3}) gives us
\begin{eqnarray*}
-\epsilon\ln Q^{\epsilon}(t,x;\bar{u})&\geq&
\inf_{v\in\mathcal{A}}\mathrm{E}_{t,x,y}^{\epsilon}
\left[
\frac{1}{2}\int_{t}^{T}\left\Vert v(s)-\bar{u}\left(s,\hat{X}^{\epsilon}(s),\hat{Y}^{\epsilon}(s)\right)\right\Vert^{2}
ds+h(\bar{X}^{\epsilon, v-\bar{u}}(T)) \right.\nonumber\\
& &\hspace{1.5cm}
\left.+\bar{U}(t,x)+R(\epsilon,v)\right]\label{Eq:GoalRegime2_6}
\end{eqnarray*}

Set $\bar{v}(s)=v(s)-\bar{u}(s,\hat X(s),\hat Y(s))$. Since $\bar{v}\in\mathcal{A}$, the representation formula (\ref{Eq:VariationalRepresentation})
implies that
\begin{equation*}
\mathrm{E}\left[  \frac{1}{2}\int_{t}^{T}\left\Vert \bar{v}(s)\right\Vert
^{2}ds+h(\hat X(T))\right]  \geq-\epsilon\log\mathrm{E}\exp\left\{  -\frac
{1}{\epsilon}h(X^{\epsilon}(T))\right\}.
\end{equation*}
Recalling that $R(\epsilon, v)$ converges in $L^{2}$ to zero uniformly in $v\in\mathcal{A}$ as $\epsilon,\delta\downarrow 0$ and using statement (ii) of Theorem \ref{T:MainTheorem2} we get
\begin{align}
\liminf_{\epsilon\rightarrow0}-\epsilon\log
Q^{\epsilon}(t,x;\bar{u})  &
\geq\liminf_{\epsilon\rightarrow0}\inf_{\bar{v}\in\mathcal{A}}\mathrm{E}\left[
\frac{1}{2}\int_{t}^{T}\left\Vert \bar{v}(s)\right\Vert
^{2}ds+h(\hat
X(T))+R(\epsilon,v)\right]  +\bar{U}(t,x)\nonumber\\
&  \geq\liminf_{\epsilon\rightarrow
0}-\epsilon\log\mathrm{E}\exp\left\{
-\frac{1}{\epsilon}h(X^{\epsilon}(T))\right\}  +\bar{U}(t,x)\nonumber\\
&  = G(t,x)+\bar{U}(t,x).\label{Eq:LDPlowerBound}
\end{align}

This concludes the proof of the theorem.

\end{proof}
We conclude this subsection with the following remark. This remark
relaxes the requirement of a solution pair
$(\xi_{\gamma}(x,y,p),\bar{H}_{\gamma}(x,p))$ to the cell problem (\ref{Eq:CellProblemRegime2_1}) to
a subsolution pair. This can be useful in problems where solving the
cell problem is difficult even numerically.

\begin{remark}\label{R:SubsolutionReg2}
In the proof of the theorem, the definition of the cell problem (\ref{Eq:CellProblemRegime2_1}) was
only used in (\ref{Eq:UniformlyLogEfficient2Regime2_2_1}). However,
it is easy to see that the inequality in
(\ref{Eq:UniformlyLogEfficient2Regime2_2_1}) would be true if
instead of the solution pair to the cell problem, a subsolution pair
was used, i.e. a pair $(\xi_{\gamma}(x,y,p),\bar{H}_{\gamma}(x,p))$
such that
 $H_{\gamma}(x,y,p,\nabla_{y}\xi_{\gamma},0,\nabla^{2}_{y}\xi_{\gamma},0)\geq \bar{H}_{\gamma}(x,p)$ for all $y\in\mathcal{Y}$
 and $(x,p)\in\mathbb{R}^{m}\times\mathbb{R}^{m}$. So, one can seek for subsolution pairs $(\xi_{\gamma}(x,y,p),\bar{H}_{\gamma}(x,p))$ to (\ref{Eq:CellProblemRegime2_1})
such that $\xi_{\gamma}(x,y,p)$ is periodic in $y$ and
  $\bar{H}_{\gamma}(x,p)$ is concave in $p$.
\end{remark}

\subsection{Importance sampling for Regime 3.}\label{SS:IS_Regime3}
Finally, we study the construction of efficient importance samplings for Regime 3. The procedure here is similar to that of Regime 2. This is to be expected, since Regime $3$ is a limiting case of Regime $2$ obtained by setting $\gamma= 0$. Therefore, we shall only present the result omitting the proof, which follows as the proof of Theorem \ref{T:UniformlyLogEfficientReg2} for Regime 2. The statement for the existence and regularity of a pair $\left(\xi_{0}(x,y,p),\bar{H}_{0}(x,y,p)\right)$ satisfying (\ref{Eq:CellProblemRegime2_1}) with $\gamma=0$ is given in Section \ref{S:HJB}.

\begin{theorem}
\label{T:UniformlyLogEfficientReg3} Let $\{\left(X^{\epsilon}(s),
Y^{\epsilon}(s) \right),\epsilon>0\}$ be the solution to
(\ref{Eq:Main}) for $s\in[t_{0},T]$ with initial point $(x_{0},y_{0})$ at time $t_{0}$. Consider a bounded and continuous function
$h:\mathbb{R}^{m}\mapsto\mathbb{R}$ and assume Conditions
\ref{A:Assumption1}.
Let $\left(\xi_{0}(x,y,p),\bar{H}_{0}(x,p)\right)$ be a pair satisfying the cell problem (\ref{Eq:CellProblemRegime2_1}) with $\gamma=0$ and  $\bar{U}_{3}(s,x)$  be a classical subsolution according to
Definition \ref{Def:ClassicalSubsolution} with Hamiltonian
$\bar{H}_{0}(x,p)$ and assume Condition \ref{Cond:ExtraRegReg2} with
$\gamma=0$. Define the control $\bar{u}(s,x,y;3)=\left(\bar{u}_{1}(s,x,y;3),\bar{u}_{2}(s,x,y;3)\right)$ by
\begin{equation*}
\bar{u}(s,x,y;3)=\left(-\sigma^{T}(x,y)\nabla_{x}\bar{U}_{3}(s,x)-\tau_{1}^{T}(x,y)\nabla_{y}\xi_{0}\left(x,y,\nabla_{x}\bar{U}_{3}(s,x)\right),
-\tau_{2}^{T}(x,y)\nabla_{y}\xi_{0}\left(x,y,\nabla_{x}\bar{U}_{3}(s,x)\right)\right)\label{Eq:feedback_controlReg3}
\end{equation*}
Then the conclusion of Theorem \ref{T:UniformlyLogEfficient} holds, i.e.
\begin{equation*}
\liminf_{\epsilon\rightarrow0}-\epsilon\ln Q^{\epsilon}(t_{0},x_{0},y_{0};\bar{u}(\cdot;3))\geq
G_{3}(t_{0},x_{0})+\bar{U}_{3}(t_{0},x_{0}). \label{Eq:GoalRegime3Subsolution}%
\end{equation*}
\end{theorem}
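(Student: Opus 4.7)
The plan is to mimic the proof of Theorem \ref{T:UniformlyLogEfficientReg2}, taking $\gamma=0$ and tracking the Regime 3 scalings. I would first use Lemma 4.3 of \cite{DupuisSpiliopoulosWang} (as in (4.14) of the Regime 2 proof) to rewrite
\[
-\epsilon\log Q^{\epsilon}(t_{0},x_{0},y_{0};\bar{u}(\cdot;3))=\inf_{v\in\mathcal{A}}\mathrm{E}\!\left[\tfrac{1}{2}\!\int_{t_{0}}^{T}\!\|v(s)\|^{2}ds-\!\int_{t_{0}}^{T}\!\|\bar{u}(s,\hat X^{\epsilon},\hat Y^{\epsilon};3)\|^{2}ds+2h(\hat X^{\epsilon}(T))\right],
\]
where $(\hat X^{\epsilon},\hat Y^{\epsilon})$ solves the controlled system (\ref{Eq:Main3}) with drifts shifted by the Girsanov change of measure and driven by $v$. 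The bounded-ness and smoothness in Condition \ref{Cond:ExtraRegReg2} (taken with $\gamma=0$) ensure $\bar{u}(\cdot;3)$ is Lipschitz in $(x,y)$ and uniformly bounded, so $(\hat X^{\epsilon},\hat Y^{\epsilon})$ is well defined.

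Next I would apply It\^o's formula to $\xi_{0}(\hat X^{\epsilon}(s),\hat Y^{\epsilon}(s),\nabla_{x}\bar{U}_{3}(s,\hat X^{\epsilon}(s)))$ and to $\bar{U}_{3}(s,\hat X^{\epsilon}(s))$, and then combine the two identities using the subsolution inequality $\partial_{s}\bar{U}_{3}+\bar{H}_{0}(x,\nabla_{x}\bar{U}_{3})\geq 0$ and the cell-problem relation (\ref{Eq:CellProblemRegime2_1}) with $\gamma=0$. The crucial scaling check is that the generator of $\hat Y^{\epsilon}$, when multiplied by the natural prefactor $\delta$, reduces to $\mathcal{L}^{3}$ (Definition \ref{Def:ThreePossibleOperators}) in the limit: the second-order $y$-derivatives carry a coefficient $\epsilon/\delta$ that vanishes under Regime 3, and the $\sqrt{\epsilon}/\delta$ stochastic integrals have $L^{2}$ norm $O(\sqrt{\epsilon/\delta})\to 0$. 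All remainders are uniform in $v\in\mathcal{A}$ thanks to Condition \ref{Cond:ExtraRegReg2}. Substituting the explicit form of $\bar u_{1},\bar u_{2}$ and mimicking the algebra leading from (\ref{Eq:UniformlyLogEfficient2Regime2_2_1}) to (\ref{Eq:UniformlyLogEfficient2Regime2_5}), the cross terms cancel and I obtain the analogous bound
\[
-\!\int_{t_{0}}^{T}\!\!\|\bar u(s)\|^{2}ds\;\geq\;\bar{U}_{3}(t_{0},x_{0})-h(\hat X^{\epsilon}(T))+\tfrac{1}{2}\!\int_{t_{0}}^{T}\!\!\|\bar u(s)\|^{2}ds-\!\int_{t_{0}}^{T}\!\!\langle\bar u(s),v(s)\rangle\,ds+R(\epsilon,v),
\]
where $R(\epsilon,v)\to 0$ in $L^{2}$ uniformly in $v$; here the analog of the $(\epsilon/\delta-\gamma)\langle\nabla_{x}\bar U_{3},b\rangle$ term from Regime 2 is simply $(\epsilon/\delta)\langle\nabla_{x}\bar U_{3},b\rangle$, whose uniform vanishing is immediate.

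Completing the square via the substitution $\bar v(s)=v(s)-\bar u(s,\hat X^{\epsilon},\hat Y^{\epsilon};3)$ and plugging back into the representation yields
\[
-\epsilon\log Q^{\epsilon}(t_{0},x_{0},y_{0};\bar u(\cdot;3))\;\geq\;\bar U_{3}(t_{0},x_{0})+\inf_{\bar v\in\mathcal{A}}\mathrm{E}\!\left[\tfrac{1}{2}\!\int_{t_{0}}^{T}\!\|\bar v(s)\|^{2}ds+h(\hat X^{\epsilon}(T))\right]+o(1),
\]
and by the variational representation of $-\epsilon\log\mathrm{E}\exp\{-h(X^{\epsilon}(T))/\epsilon\}$ together with the Laplace lower bound in Theorem \ref{T:MainTheorem2}(ii), the liminf of the right side dominates $\bar U_{3}(t_{0},x_{0})+G_{3}(t_{0},x_{0})$, which is the desired conclusion.

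The step I expect to be the main obstacle is the It\^o manipulation of $\xi_{0}$: because the $\gamma=0$ cell problem (\ref{Eq:CellProblemRegime2_1}) is genuinely first-order in $y$, one has much less built-in regularity than in Regime 2. This is exactly why Condition \ref{Cond:ExtraRegReg2} (with $\gamma=0$) must be imposed as a hypothesis, and why one should invoke the Laplace lower bound under either the one-dimensional setting or the $x$-independent setting of Theorem \ref{T:MainTheorem2}(iii), in which $\xi_{0}$ and the corresponding minimizing measure are known to exist with sufficient regularity (Theorem \ref{T:ExplicitSolutionRegime1}).
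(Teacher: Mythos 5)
Your proposal is correct and follows essentially the same route as the paper, which omits the proof of this theorem and states that it is obtained by repeating the Regime 2 argument (representation of $-\epsilon\log Q^{\epsilon}$, It\^{o}'s formula applied to $\xi_{0}$ and $\bar{U}_{3}$, the subsolution and cell-problem inequalities, completion of the square, and the Laplace lower bound) with $\gamma=0$ and the Regime 3 scaling $\epsilon/\delta\to 0$, so that the $(\epsilon/\delta-\gamma)\langle\nabla_{x}\bar{U}_{3},b\rangle$ remainder becomes $(\epsilon/\delta)\langle\nabla_{x}\bar{U}_{3},b\rangle$, exactly as you note. One small correction to your closing paragraph: the argument only uses the lower bound of Theorem \ref{T:MainTheorem2}(ii), which holds with full $x$-dependence, so the structural restrictions of part (iii) (one-dimensionality or $x$-independence of $g,\tau_{i}$) are not needed here --- the paper makes precisely this remark after the theorem --- and the existence/regularity of $\left(\xi_{0},\bar{H}_{0}\right)$ is supplied by the hypothesis (Condition \ref{Cond:ExtraRegReg2} with $\gamma=0$, with background in Section \ref{S:HJB}), not by Theorem \ref{T:ExplicitSolutionRegime1}.
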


Notice here that even though in the statement for the large deviations for Regime 3 (Theorem \ref{T:MainTheorem2}), we require that $g(x,y)=g(y)$ and $\tau_{i}(x,y)=\tau_{i}(y)$, in the statement of the related importance sampling lower bound we do not require that assumption. The reason is that in the proof of the importance sampling bound only the Laplace principle lower bound is used (compare with (\ref{Eq:LDPlowerBound})) and that holds with the $x-$dependence as well; see the second statement of Theorem \ref{T:MainTheorem2}.

\section{Connection with homogenization of Hamilton-Jacobi-Bellman equations.}\label{S:HJB}
It is evident from the calculations in Section \ref{S:IS} that there
is an implied relation of importance sampling for multiscale
problems and homogenization of a related class of HJB equations. In
this section we aim to make this connection clear. We only outline
the results that are relevant to the importance sampling results.
 We refer the interested reader to the literature of homogenization for Hamilton-Jacobi-Bellman equations for more detailed discussions,
e.g. \cite{AlvarezBardi2001,ArisawaLions,BuckdahnIchihara,Evans2,HorieIshii,LionsSouganidis}.

Let us define the function
\begin{equation*}
\theta^{\epsilon}(t,x,y)=\mathbb{E}_{t,x,y}\left[e^{-\frac{1}{\epsilon}h(X^{\epsilon}(T))}\right]
\end{equation*}
where $(X^{\epsilon},Y^{\epsilon})$ is the strong solution to the uncontrolled process (\ref{Eq:Main}) with initial point $(X^{\epsilon}(t),Y^{\epsilon}(t))=(x,y)$. A straightforward computation shows that the function
\begin{equation*}
G^{\epsilon}(t,x,y)=-\epsilon \ln \theta^{\epsilon}(t,x,y)
\end{equation*}
solves the Hamilton-Jacobi-Bellman equation
\begin{eqnarray}
\partial_{t}G^{\epsilon}+H_{\epsilon/\delta}\left(x,y,\nabla_{x}G^{\epsilon},\frac{\nabla_{y}G^{\epsilon}}{\delta},\epsilon\nabla^{2}_{x}G^{\epsilon},
 \frac{\nabla^{2}_{y}G^{\epsilon}}{\delta},\nabla_{x}\nabla_{y}G^{\epsilon} \right)&=&0\nonumber\\
G^{\epsilon}(T,x,y)&=&h(x)\nonumber
\end{eqnarray}
where the Hamiltonian $H_{\epsilon/\delta}$ is defined as in (\ref{Eq:CellProblemRegime2_0}) with $\epsilon/\delta$ in place of $\gamma$. Under Conditions $\ref{A:Assumption1}$ and $\ref{A:Assumption2}$ we have the following.

\begin{itemize}
\item{In the case of \textit{Regime $1$}, we have that
$G^{\epsilon}(t,x,y)$ converges uniformly in compact subsets of
$[0,T]\times\mathbb{R}^{m}\times\mathbb{R}^{d-m}$ to the unique
bounded and continuous viscosity solution of (\ref{Eq:HJBequation2})
with effective Hamiltonian given by (\ref{Eq:ControlFormHJB}). We
refer the reader to \cite{BuckdahnIchihara,AlvarezBardi2001} for
details.}

\item{In the case of \textit{Regime $2$} the effective  equation has again the form (\ref{Eq:HJBequation2}) but the effective Hamiltonian is given by the unique constant $\bar{H}_{\gamma}$ such that the periodic cell problem (\ref{Eq:CellProblemRegime2_1}) has a unique (up to an additive constant) periodic solution $\xi_{\gamma}\in \mathcal{C}^{2}(\mathbb{R}^{d-m})$ (see Theorem II.2 in \cite{ArisawaLions}). Under our assumptions, the effective Hamiltonian $\bar{H}_{\gamma}(x,p)$ is continuous in $x$ and concave in $p$ (see Propositions $11$ and $12$ in \cite{AlvarezBardi2001}).}

\item{In the case of \textit{Regime $3$} the effective  equation has again the form (\ref{Eq:HJBequation2}) but the effective Hamiltonian is given by the unique constant $\bar{H}_{0}$ such that the periodic cell problem (\ref{Eq:CellProblemRegime2_1}) with
 $\gamma=0$ has a Lipschitz continuous periodic solution $\xi_{0}$ (see \cite{AlvarezBardi2001, BardiDolcetta}). Again, under our assumptions, the effective Hamiltonian $\bar{H}_{0}(x,p)$ is continuous in $x$ and concave in $p$ (see Propositions $3$ in \cite{AlvarezBardi2001}).}
\end{itemize}
In regards to how these general results apply to importance sampling for multiple scale problems, we have the following remark.

\begin{remark}
In the context of importance sampling, we observe two things:
\begin{enumerate}
\item{the subsolutions that we are considering are subsolutions to
the corresponding limiting HJB equations, and}
\item{the cell problem arising in homogenization of HJB equations enters in the formulation of the importance sampling scheme in each regime. }
\end{enumerate}
These
imply that in Monte Carlo simulation for multiscale problems both
the local information described by the corresponding cell problem
and the homogenized information that is described by the solution to
the HJB equation, enter the asymptotically optimal change of
measure. As it is demonstrated in the numerical simulations
presented in \cite{DupuisSpiliopoulosWang}, neglecting the local
information and basing the simulation only on the homogenized
information can lead to estimators that perform poorly in the small
noise regime.
\end{remark}

\section{Examples}\label{S:Examples}
In this section we present some simple examples from the existing literature to illustrate how our calculations look like. We consider two examples. The first one is the  first order Langevin equation.
As we said in the introduction this model can be used to model rough energy landscapes motivated by applications in chemistry; see also
 \cite{LifsonJackson,MondalGhosh,SavenWangWolynes, DupuisSpiliopoulosWang2, Zwanzig}. This model was extensively discussed in  \cite{DupuisSpiliopoulosWang,DupuisSpiliopoulosWang2} and the theory
 was also demonstrated by simulation results. We recall the formulas here for completeness for this particularly important example. The second example is related to short time asymptotics for processes that depend on another fast mean reverting process. Models of this nature appear in mathematical finance in the context of fast mean reverting stochastic volatility models, e.g., \cite{FengFouqueKumar}. Assuming that we want to estimate
\begin{equation*}
\theta(\epsilon)=\mathrm{E}[e^{-\frac{1}{\epsilon}h(X^{\epsilon}(T))}|X^{\epsilon}(0)=x_{0}, Y^{\epsilon}(0)=y_{0}]
\end{equation*}
for a given function $h(x)$ and a given corresponding subsolution $\bar{U}$, we also provide the control that attains the desired bounds in Theorems \ref{T:UniformlyLogEfficientReg1}, \ref{T:UniformlyLogEfficientReg2} and \ref{T:UniformlyLogEfficientReg3}.

\subsection{The first order Langevin equation}\label{SS:Example1}
We consider
the first order Langevin  equation
\begin{equation}
dX^{\epsilon}(s)=\left[  -\frac{\epsilon}{\delta}\nabla Q\left(
\frac{X^{\epsilon}(s)}{\delta}\right)  -\nabla V\left(  X^{\epsilon
}(s)\right)  \right]  dt+\sqrt{\epsilon}\sqrt{2D}dW(s),\hspace{0.2cm}%
X^{\epsilon}(0)=x_{0}. \label{Eq:LangevinEquation2}%
\end{equation}
To connect to the notation of the general model (\ref{Eq:Main}), this corresponds to
\[
f(x,y)=b(x,y)=-\nabla Q(y),~~~g(x,y)=c(x,y)=-\nabla V(x),~~~\tau_{1}(x,y)=\sigma(x,y)=\sqrt{2D}, ~~~\tau_{2}(x,y)=0.
\]

Let us consider the case of Regime $1$. The invariant distribution associated to the operator $\mathcal{L}^{1}$ is the Gibbs distribution (independent of $x$)
\[
\mu(dy) = \frac{1}{L}e^{-\frac{Q(y)}{D}}dy,\hspace{0.2cm}%
~~L=\int_{\mathcal{Y}}e^{-\frac{Q(y)}{D}}dy.
\]
Moreover, Condition \ref{A:Assumption2} is trivially satisfied. In
dimension $1$, an easy computation shows that the action functional
takes the following explicit form
\begin{equation*}
S_{0T}(\phi)=%
\begin{cases}
\displaystyle{\frac{1}{2}\int_{0}^{T}\frac{1}{q}[\dot{\phi}(s)-r(\phi
(s))]^{2}ds} & \text{if }\phi\in\mathcal{AC}([0,T];\mathbb{R})\text{ and }%
\phi(0)=x_{0}\\
+\infty & \text{otherwise},
\end{cases}
\label{ActionFunctional1_1}%
\end{equation*}
where
\[
r(x)=-\frac{\lambda^{2}V^{\prime}(x)}{L\hat{L}},\hspace{0.5cm}q=\frac
{2D\lambda^{2}}{L\hat{L}}%
\]
and
\[
L=\int_{\mathcal{Y}}e^{-\frac{Q(y)}{D}}dy,\hspace{0.5cm}\hat{L}=\int
_{\mathcal{Y}}e^{\frac{Q(y)}{D}}dy.
\]

In addition, we can also compute the optimal change of measure in regards to the importance sampling problem.
Given a classical subsolution $\bar{U}$, the importance sampling control that
appears in Theorem \ref{T:UniformlyLogEfficientReg1} takes the form
\begin{equation*}
\bar{u}(s,x,y;1)=\left(-\frac{\sqrt{2D}\lambda}{\hat{L}}e^{\frac{Q(y)}{D}}\partial_{x}\bar{U}%
(s,x),0\right). \label{Eq:OptimalControlExample1}%
\end{equation*}
The choice of the subsolution $\bar{U}$ according to Definition \ref{Def:ClassicalSubsolution} depends on the terminal cost of interest $h(x)$. See also \cite{DupuisSpiliopoulosWang,DupuisSpiliopoulosWang2} for some particular examples with specific choices of subsolutions $\bar{U}(s,x)$.

\subsection{Short time asymptotics and fast mean reversion.}\label{SS:Example2}

Next we consider a particular system of slow-fast motion, where the fast motion is a fast mean reverting process. The slow motion appears due to the interest in short time asymptotics.
In particular, let us consider the system in $1+1$ dimension

\begin{eqnarray}
dX(s)&=&h\left(Y(s)\right)ds+\sigma\left(Y(s)\right)
dW(s), \label{Eq:SVModela}\\
dY(s)&=& \frac{1}{\delta^{2}}\left(m-Y(s)\right)   ds+\frac{1}{\delta}\left[
\rho dW(s)+\sqrt{1-\rho^{2}}dB(s)\right]\nonumber
\end{eqnarray}
where $0<\delta\ll 1$ is the fast mean reversion parameter, $m\in\mathbb{R}$ and $\rho\in[-1,1]$ is the correlation between the noise of the $X$ and $Y$ process. Assume that we are interested in short time asymptoptics. Then it is convenient to change time $s\mapsto \epsilon s$ with $0<\epsilon\ll 1$. Writing the system under the new timescale, we obtain $\left\{\left(X^{\epsilon}(s), Y^{\epsilon}(s)\right), s\in[0,T]\right\}$ as the unique strong solution to:

\begin{eqnarray}
dX^{\epsilon}(s)&=&\epsilon h\left(Y^{\epsilon}(s)\right)ds+\sqrt{\epsilon}%
\sigma\left(Y^{\epsilon}(s)\right)
dW(s), \label{Eq:SVModel}\\
dY^{\epsilon}(s)&=& \frac{\epsilon}{\delta^{2}}\left(m-Y^{\epsilon}(s)\right)   ds+\frac{\sqrt{\epsilon}}{\delta}\left[
\rho dW(s)+\sqrt{1-\rho^{2}}dB(s)\right]\nonumber
\end{eqnarray}

Both components $(X,Y)$ take values in $\mathbb{R}$. We supplement the system with initial condition $(X^{\epsilon}(0),Y^{\epsilon}(0))=(x_{0},y_{0})$. To connect to the notation of the general model (\ref{Eq:Main}), this corresponds to
\[
b(x,y)=0,~~~c^{\epsilon}(x,y)=\epsilon h(y), ~~~\sigma(x,y)=\sigma(y),
 \]
 \[f(x,y)=m-y,~~~ g(x,y)=0, ~~~\tau_{1}(x,y)=\rho, ~~~\tau_{2}(x,y)=\sqrt{1-\rho^{2}}.
\]

Of course, this system violates the periodicity assumption. However
due to the mean reverting feature of the fast motion, the
conclusions hold in this case as well.

 In the next subsections we see the form of the large deviations action functional and of the control that defines the asymptotically optimal change of measure for all three regimes.
\subsubsection{The case of Regime 1.}
A simple computation shows that the only possible solution to cell
problem (\ref{Eq:CellProblem}) is the zero solution (this is because
$b=0$). Also, it is easy to see that the invariant measure
corresponding to the operator $\mathcal{L}^{1}$ is independent of
$x$ and can be explicitly computed, taking the form
\begin{equation*}
\mu(dy)=\frac{1}{\sqrt{\pi}} e^{-(y-m)^{2}}dy
\end{equation*}
Then this implies that the formula for the action functional (Theorem \ref{T:MainTheorem3}) becomes
\begin{equation*}
S_{0T}(\phi)=%
\begin{cases}
\displaystyle{\frac{1}{2}\int_{0}^{T}\frac{1}{q}|\dot{\phi}(s)|^{2}ds} & \text{if }\phi\in\mathcal{AC}([0,T];\mathbb{R})\text{ and }%
\phi(0)=x_{0}\\
+\infty & \text{otherwise},
\end{cases}
\end{equation*}
where $q=\int_{\mathcal{Y}}\sigma^{2}(y)\mu(dy)$. Given a classical subsolution $\bar{U}$, the importance sampling control that
appears in Theorem \ref{T:UniformlyLogEfficientReg1} takes the form
\begin{equation*}
\bar{u}(s,x,y;1)=\left(-\sigma(y)\partial_{x}\bar{U}(s,x),0\right).
\end{equation*}
As in the previous example, the choice of the subsolution $\bar{U}$ according to Definition \ref{Def:ClassicalSubsolution} depends on the terminal cost of interest $h(x)$.
\subsubsection{The case of Regime 2.}
 The situation here is more complicated because the infimization problem that appears in the definition
of the local rate function, Theorem \ref{T:MainTheorem2}, does not
necessarily have a closed form solution as it had for Regime $1$.
However, due to the one-dimensionality aspect of the problem we can
still do some algebraic computations. A simple algebra shows that the formula
for the action functional (Theorem \ref{T:MainTheorem2}) becomes
\begin{equation*}
S_{0T}(\phi)=%
\begin{cases}
\displaystyle{\frac{1}{2}\int_{0}^{T}L_{2}(\phi_{s},\dot{\phi}_{s})ds} & \text{if }\phi\in\mathcal{AC}([0,T];\mathbb{R})\text{ and }%
\phi(0)=x_{0}\\
+\infty & \text{otherwise},
\end{cases}
\end{equation*}
where
\begin{equation*}
L_{2}(x,\beta)=\inf_{v\in\mathcal{A}_{x,\beta}^{2}}\left\{
\frac{1}{2}\int_{\mathcal{Y}}| v(y)|^{2}\mu_{v}(dy)\right\}.
\end{equation*}
with
\begin{equation*}
\mu_{v}(dy)=\frac{1}{L}e^{\int_{1}^{y}\left[2\gamma(m-z)+2 \rho v(z)\right]dz} dy,\qquad L=\int_{\mathcal{Y}}e^{\int_{1}^{y}\left[2\gamma(m-z)+2 \rho v(z)\right]dz}dy
\end{equation*}
\begin{equation*}
\mathcal{A}_{x,\beta}^{2}  =\left\{  v(\cdot):\mathcal{Y}\mapsto
\mathbb{R},\beta=\int_{\mathcal{Y}}\sigma(y)v(y)\mu_{v}(dy)\right\}.
\end{equation*}

Notice that the invariant measure $\mu(dy)$ and the control $v$ decouple when $\rho=0$. In the general case $\rho\in[-1,1]$,  the equation for the related cell problem (\ref{Eq:CellProblemRegime2_1}) takes the form
\begin{equation*}
\frac{\gamma}{2}\xi_{\gamma}''(y)+\left[\gamma(m-y)-\sigma(y)\rho p\right]\xi_{\gamma}'(y)-\frac{1}{2}\left(\xi_{\gamma}'(y)\right)^{2}-\frac{1}{2}\sigma^{2}(y)p^{2}=\bar{H}_{\gamma}(p)
\end{equation*}

There is a unique pair
$(\xi_{\gamma}(y),\bar{H}_{\gamma}(p))$ satisfying this equation
such that $\xi_{\gamma}(y)\in W^{1}_{loc}$, see
\cite{ArisawaLions,KaiseSheu}. Notice that for this model, the
solution $(\xi_{\gamma}(y),\bar{H}_{\gamma}(p))$ to the cell problem
is independent of the slow motion $x$. Obtaining closed form
solutions to such equations is difficult in principle, especially because we are interested in pairs $(\xi_{\gamma}(y),\bar{H}_{\gamma}(p))$.
Numerical methods such as the ones developed in \cite{CamilliMarchi,GomesOberman} will be useful here. Notice also that by Remark \ref{R:SubsolutionReg2} appropriate
subsolution pairs suffice.  We plan to return to these issues in detail in a future work.

Given sufficient smoothness such that Theorem \ref{T:UniformlyLogEfficientReg2} is applicable and a classical subsolution $\bar{U}$ (depending on the choice of the terminal cost $h(x)$), the importance sampling control that
appears in Theorem \ref{T:UniformlyLogEfficientReg2} takes the form
\begin{equation*}
\bar{u}(s,x,y;2)=\left(-\sigma(y)\partial_{x}\bar{U}(s,x)-\rho \partial_{y}\xi_{\gamma}\left(y,\partial_{x}\bar{U}(s,x)\right), -\sqrt{1-\rho^{2}}\partial_{y}\xi_{\gamma}\left(y,\partial_{x}\bar{U}(s,x)\right)\right).
\end{equation*}

\subsubsection{The case of Regime 3.}
It turns out that we can make some explicit computations here. To
simplify things we will assume for brevity that $\rho=1$. With these assumptions we get that $\tau_{1}(x,y)=1$
and $\tau_{2}(x,y)=0$. Assume that $\sigma\in L^{1}(\mathcal{Y})$ and that $\int_{\mathcal{Y}}\sigma(y)dy\neq 0$. A
straightforward computation shows that the local rate function takes
the form
\begin{equation*}
L_{3}(x,\beta)=\inf_{v}\left\{
\frac{1}{2}\int_{\mathcal{Y}}|v(y)|^{2}\frac{\beta}{v(y)\int_{\mathcal{Y}}\sigma(y)dy}dy: \int_{\mathcal{Y}}\frac{\beta}{v(y)\int_{\mathcal{Y}}\sigma(y)dy}dy=1\right\}.
\end{equation*}
This problem can be solved explicitly yielding
\begin{equation*}
S_{0T}(\phi)=%
\begin{cases}
\displaystyle{\frac{1}{2}\int_{0}^{T}|\dot{\phi}_{s}|^{2}\left(\int_{\mathcal{Y}}\sigma(y)dy\right)^{-2}ds} & \text{if }\phi\in\mathcal{AC}([0,T];\mathbb{R})\text{ and }%
\phi(0)=x_{0}\\
+\infty & \text{otherwise},
\end{cases}
\end{equation*}

The equation for the related cell problem (\ref{Eq:CellProblemRegime2_1}) with $\gamma=0$ takes the particular simple form
\begin{equation*}
-\sigma(y)p\xi_{0}'(y)-\frac{1}{2}\sigma^{2}(y)p^{2}-\frac{1}{2}\left(\xi_{0}'(y)\right)^{2}=\bar{H}_{0}(p)
\end{equation*}
This has the form of first order Bellman equation with quadratic Hamiltonian. Such equations have been studied in the literature and our assumptions guarantee that there are pairs
$(\xi_{0},\bar{H}_{0})$ such that $\xi_{0}$ is a continuous viscosity solution  when $\bar{H}_{0}\geq \bar{H}_{0}^{*}$ where $\bar{H}_{0}^{*}$ is a critical value. We refer the
interested reader to \cite{KaiseSheu2} for an extensive discussion on this.

If $\sigma(y)$ is periodic in $y$, say with period $\lambda=1$, then $\mathcal{Y}=\mathbb{T}=[0,1]$ and we look for a periodic solution $\xi_{0}(y)$. It turns out that the Bellman equation
can then be solved explicitly yielding

\begin{equation*}
\xi_{0}(y,p)= p\left(y\int_{0}^{1}\sigma(w)dw-\int_{0}^{y}\sigma(w)dw\right), \qquad \bar{H}_{0}(p)=-\frac{1}{2}p^{2}\left(\int_{0}^{1}\sigma(y)dy\right)^{2}
\end{equation*}

Thus, indeed $(\xi_{0}(y,p),\bar{H}_{0}(p))$ satisfy the assumptions of Theorem \ref{T:UniformlyLogEfficientReg3}.
Given a classical subsolution $\bar{U}$ (depending on the choice of the terminal cost $h(x)$), the importance sampling control that
appears in Theorem \ref{T:UniformlyLogEfficientReg3} takes the particularly simple form
\begin{equation*}
\bar{u}(s,x,y;3)=\left(-\sigma(y)\partial_{x}\bar{U}(s,x)-\partial_{y}\xi_{0}(y,\partial_{x}\bar{U}(s,x)), 0\right)=\left(-\left[\int_{0}^{1}\sigma(w)dw\right] \partial_{x}\bar{U}(s,x),0\right).
\end{equation*}

\section{Conclusions}\label{S:Conclusions}
In this paper we have developed the large deviations theory and a rigorous mathematical framework for the importance sampling theory for systems of slow-fast motion
 like (\ref{Eq:Main}).  All the possible cases of interaction of fast motion and intensity of the noise are considered. The asymptotic performance of the proposed schemes are
 in terms of appropriate subsolutions to related HJB equations and in terms of appropriate "cell problems". Straightforward adaptation of importance sampling schemes from
 standard diffusions without multiscale features lead to poor results in the multiscale setting. We have shown how the problem can be dealt with in the general multidimensional
 setting for fully dependent systems of slow-fast motion, when the fast motion is periodic.

\end{document}